\newtheorem{thm}{Theorem}[section]
\newtheorem{lem}[thm]{Lemma}
\newtheorem{cor}[thm]{Corollary}
\newtheorem{prop}[thm]{Proposition}
\theoremstyle{definition}
\theoremstyle{remark}
\numberwithin{equation}{section}
\newcommand{\rmnum}[1]{\romannumeral #1}
\newcommand{\Rmnum}[1]{\expandafter\@slowromancap\romannumeral #1@}
\newcommand{\bv}{\mathbf{v}}
\newcommand{\bw}{\mathbf{w}}
\newcommand{\vre}{\varepsilon}
\newcommand{\bp}{\mathbf{p}}
\newcommand{\br}{\mathbf{r}}
\newcommand{\DI}{\mathrm{DI}}
\newcommand{\R}{\mathbb{R}}
\newcommand{\N}{\mathbb{N}}
\newcommand{\SL}{\operatorname{SL}}
\newcommand{\GL}{\operatorname{GL}}
\newcommand{\Z}{\mathbb{Z}}
\newcommand{\Q}{\mathbb{Q}}
\newcommand{\C}{\mathbb{C}}
\newcommand{\diag}{\mathrm{diag}}
\newcommand{\spa}{\mathrm{span}}
\newcommand{\EE}{\mathbf{e}}
\newcommand{\sm}{\smallsetminus}
\newcommand{\Ad}{\operatorname{Ad}}
\newcommand{\til}{\widetilde}
\begin{document}

\title{Invariant measures for solvable groups and Diophantine approximation}

% Information for first author
\author{Ronggang Shi}
% Address of record for the research reported here
\address{School of Mathematical Sciences, Tel Aviv University, Tel Aviv 69978, Israel, and
School of Mathematical Sciences, Xiamen University, Xiamen 361005, PR China}
% Current address
%\curraddr{Department of Mathematics, Ohio State
 %University, Columbus, Ohio 43210}
 \email{ronggang@xmu.edu.cn}
% \thanks will become a 1st page footnote.
\thanks{The authors are supported by NSFC (11201388), NSFC (11271278),
  BSF grant 2010428, and 
 ERC starter grant DLGAPS 279893.}

\author{Barak Weiss}
\address{School of Mathematical Sciences, Tel Aviv University, Tel
  Aviv 69978, Israel} 
% Current address
%\curraddr{Department of Mathematics, Ohio State
 %University, Columbus, Ohio 43210}
 \email{barakw@post.tau.ac.il}

% General info
\subjclass[2000]{Primary   28A33; Secondary 37C85, 22E40.}

\date{}

%\dedicatory{This paper is dedicated to my advisor.}

\keywords{homogeneous dynamics,  equidistribution, ergodic theorem}

\begin{abstract}
We show that if $\mathcal{L}$ is a line in the plane containing a
badly approximable vector, then almost every point in $\mathcal{L}$
does not admit an improvement in Dirichlet's theorem. Our proof relies
on a measure classification result for certain measures invariant
under a non-abelian two dimensional group on the
homogeneous space $\SL_3(\R)/\SL_3(\Z)$. Using the measure
classification theorem, we
reprove a result of Shah about planar nondegenerate curves (which are
not necessarily analytic), and prove analogous results for the
framework of Diophantine approximation with weights. We also show
that there are line segments in $\R^3$, which do contain badly
approximable points, and for which all points do admit an improvement
in Dirichlet's theorem. 
\end{abstract}

\maketitle

\markright{}

\section{Introduction}\label{sec;intro}
A classical result in Diophantine approximation is Dirichlet's theorem
which asserts that for any $\bv \in \R^n$ and any $Q \geq 1$ there are $q
\in \N$ and $\bp \in \Z^n$ such that 
$$
\|q \bv - \bp\| < \frac{1}{Q^{1/n}} \ \text{ and } \ q \leq Q.
$$
The norm used here and throughout this paper is the sup-norm on
$\R^n$. 
Let $\sigma \in (0,1)$. Following Davenport and Schmidt \cite{ds}, we
say that $\bv$  {\em admits a
$\sigma$-improvement for Dirichlet's theorem}, and write $\bv \in \DI(\sigma)$, if 
for all sufficiently large $Q$, there are $q \in \N$ and $\bp \in
\Z^n$ such that 
$$
\|q \bv - \bp\| < \frac{\sigma}{Q^{1/n}} \ \text{ and } \ q < \sigma Q.
$$
Finally we say that $\bv$ {\em admits no improvement in Dirichlet's
  theorem} if $\bv \notin \bigcup_{\sigma <1} \DI(\sigma)$. It is known that almost
every $\bv \in \R^n$ (with respect to Lebesgue measure) admits no
improvement in Dirichlet's theorem. It is an interesting problem to
decide, given a measure $\mu$ on $\R^n$, whether $\mu$-a.e. $\bv$
admits no improvement in Dirichlet's theorem. See \cite{ds, KW_di}
for some results and questions in this direction. 

In a recent breakthrough, Shah \cite{s09} showed that if $\mu$ is the
length measure on an analytic curve in $\R^n$, which is not contained
in any affine hyperplane, then $\mu$-a.e. $\bv$ admits no
improvement in Dirichlet's theorem. 
For certain 
fractal measures $\mu$ in $\mathbb R^2$,
the same conclusion is obtained in \cite{s120} and \cite{s12}.
These works leave open the
question of measures which are length measures on lines. In this
direction, Kleinbock \cite{almost vs no} showed that for any line
$\mathcal{L} \subset \R^n$ which is not contained in $\DI(\sigma_0)$
for some $\sigma_0 >0$, for almost every $\bv
\in \mathcal{L}$ (w.r.t. length measure on $\mathcal{L}$), there is
  $\sigma = \sigma(\bv)$ such that $\bv \notin \DI(\sigma)$. Our first
  result strengthens this conclusion under a stronger hypothesis, for
  planar lines. Recall that $\bv$ is called {\em badly approximable}
  if there is $c>0$ such that for any $q \in \N$ and $\bp \in \Z^n$,
  $\|q\bv - \bp\| \geq \frac{c}{q^{1/n}}.$ 

\begin{thm}\label{thm;dirichlet}
 Suppose that a line  $\mathcal L$ in $\mathbb R^2$ contains a badly approximable vector. 
Then almost every element of $\mathcal L$ (w.r.t. length measure)
admits no improvement in Dirichlet's theorem. 
\end{thm}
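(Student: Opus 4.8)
The plan is to recast the problem in homogeneous dynamics via the Dani correspondence, build an invariant measure by pushing the line forward under the diagonal flow, and then apply the measure classification theorem announced in the abstract. Set $n=2$, $G=\SL_3(\R)$, $\Gamma=\SL_3(\Z)$, $X=G/\Gamma$, and write $\mathcal L=\{\bv_0+s\bw:s\in\R\}$ with $\bv_0$ badly approximable and $\bw\in\R^2\sm\{0\}$; put
$$u_\bv=\begin{pmatrix}I_2 & \bv\\ 0 & 1\end{pmatrix},\qquad g_t=\diag(e^{t/2},e^{t/2},e^{-t}),\qquad x_0=u_{\bv_0}\Gamma .$$
Since $u_{\bv_0+s\bw}=u_{s\bw}u_{\bv_0}$, the image of $\mathcal L$ in $X$ is the orbit of the unipotent group $U=\{u_{s\bw}:s\in\R\}$ through $x_0$. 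I will use two classical facts: $\bv_0$ is badly approximable iff $\{g_tx_0:t\ge0\}$ is bounded in $X$; and for $\sigma\in(0,1)$, $\bv_0+s\bw\in\DI(\sigma)$ iff $g_tu_{s\bw}x_0$ lies, for all large $t$, in the open set $K_\sigma:=\{\Lambda\in X:\Lambda\text{ has a nonzero vector of sup-norm}<\sigma\}$, whose closure satisfies $m_X(\overline{K_\sigma})<1$ because the complement of $\overline{K_\sigma}$ contains a neighbourhood of $\Z^3$. Hence it suffices to show: for Lebesgue-a.e.\ $s$ and every rational $\sigma\in(0,1)$, the forward orbit $\{g_tu_{s\bw}x_0\}_{t\ge0}$ is not eventually inside $K_\sigma$.

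The argument is by contradiction. Fix $\sigma$ and suppose the set of such ``bad'' $s$ has positive Lebesgue measure; a straightforward exhaustion argument then produces $T>0$ and a bounded, positive-measure set $S$ with $g_tu_{s\bw}x_0\in K_\sigma$ for all $s\in S$ and all $t\ge T$. Let $\lambda$ be normalized Lebesgue measure on $S$, pushed to $X$ via $s\mapsto u_{s\bw}x_0$, and put $\mu_t=(g_t)_*\lambda$. The commutation relation $g_tu_{s\bw}=u_{e^{3t/2}s\bw}g_t$ shows that $\mu_t$ is the image of normalized Lebesgue on $S$ under $s\mapsto u_{e^{3t/2}s\bw}(g_tx_0)$, i.e.\ a piece of length $\asymp e^{3t/2}$ of the $U$-orbit through the point $g_tx_0$, which stays in a fixed compact set by the badly-approximable hypothesis. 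By the Dani--Margulis uniform nondivergence estimate the family $\{\mu_t\}$ is tight.

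Let $\eta$ be a weak-$*$ limit point of the Cesàro averages $\frac1{T_k}\int_0^{T_k}\mu_t\,dt$ as $T_k\to\infty$; by tightness it is a probability measure. It is $g_s$-invariant by the usual Cesàro argument, and it is $U$-invariant because for fixed $c$ one has $\|(u_{c\bw})_*\mu_t-\mu_t\|\le 2\,\mathrm{Leb}\big(S\triangle(S-ce^{-3t/2})\big)/\mathrm{Leb}(S)\to0$ by $L^1$-continuity of translation, so the averaged defect vanishes. Thus $\eta$ is invariant under the non-abelian two-dimensional group $F=\langle U,\{g_t\}\rangle\cong\R\ltimes\R$. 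Moreover $\eta(\overline{K_\sigma})=1$, since $\mu_t(\overline{K_\sigma})=1$ for $t\ge T$ and the tail $t<T$ is negligible in the Cesàro average; as $m_X(\overline{K_\sigma})<1$, this gives $\eta\ne m_X$.

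It then remains to apply the measure classification theorem to the $F$-invariant probability measure $\eta$: granting that every such measure is either $m_X$ or an algebraic measure supported on a proper closed $F$-invariant subset, one excludes the latter possibilities using that $\bv_0$ is badly approximable (equivalently, $\{g_tx_0\}_{t\ge0}$ is bounded) — the exceptional measures are supported on rigid closed homogeneous sets, whereas $\eta$ has been built from the $U$-orbit of $x_0$ whose $g_t$-translates stay compact, and unwinding the construction one sees that this forces a $U$-orbit closure of an accumulation point of $\{g_tx_0\}$ into $\mathrm{supp}\,\eta$, incompatible with the exceptional structure. Hence $\eta=m_X$, contradicting $\eta(\overline{K_\sigma})=1$, and Theorem~\ref{thm;dirichlet} follows. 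I expect the main obstacle to be the measure classification theorem itself (the technical core of the paper) together with this final matching step that rules out the exceptional $F$-invariant measures for a line through a badly approximable vector; secondary technical points are the precise Dani-correspondence normalizations ensuring $m_X(\overline{K_\sigma})<1$, and the nondivergence input giving tightness of $\{\mu_t\}$.
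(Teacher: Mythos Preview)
Your approach is essentially the paper's: push the line forward by $g_t$, take Cesàro averages, use Dani--Margulis nondivergence (anchored at the badly approximable point) for tightness, verify $U$- and $\{g_t\}$-invariance of any weak-$*$ limit, and invoke the measure classification. The paper packages the first four steps as a positive equidistribution statement (Theorem~\ref{thm;equiline}) for an arbitrary absolutely continuous measure on the line, then deduces Theorem~\ref{thm;dirichlet} via Proposition~\ref{prop;dynamical2}; your contradiction argument restricted to the ``bad'' set $S$ is the same mechanism, just organized differently.

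Where you go astray is the final step. The measure classification result you need is Theorem~\ref{thm;measure}, and it is a clean \emph{unique ergodicity} statement: for $F=\{g_t\}$ and $U=\{u_{s\bw}\}$ as you set them up, $F$ normalizes $U$, $FU$ is nonabelian, and $F$ fixes no nonzero vector in $\R^3$, so the \emph{only} $FU$-invariant probability measure on $X$ is $m_X$. There are no exceptional algebraic measures to ``match away'', and your paragraph attempting to exclude them using the badly approximable hypothesis is both unnecessary and not rigorous as written. The badly approximable hypothesis enters \emph{only} in the nondivergence step (to guarantee tightness via a basepoint $g_tx_0$ in a fixed compact set), exactly as in Lemma~\ref{lem;nonescape}; once you have a probability limit $\eta$ that is $FU$-invariant, Theorem~\ref{thm;measure} forces $\eta=m_X$ immediately, and your contradiction with $\eta(\overline{K_\sigma})=1$ follows.
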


%\begin{proof}
%It follows from \cite[Theorem 6.1]{abv} that under the condition of 
%(\ref{eq;condition})
%the line  $\mathcal L$ contains a badly approximable  vector. 
%Therefore the conclusion follows from Theorem \ref{thm;dirichlet}.
%\end{proof}

Another question raised by Shah's work is to what extent one can relax
the hypothesis of the analyticity of the curve. A map $\varphi : [0,1]
\to \R^n$ is called {\em nondegenerate} if it is $n$ times
continuously differentiable, and for almost every $s$, the Wronskian
determinant of $\varphi'(s)$ does not vanish (i.e. the vectors  $\varphi'(s), \varphi''(s), \cdots ,
\varphi^{(n)}(s)$ are linearly independent in $\R^n$). It is clear that analytic
curves not contained in affine hyperplanes are nondegenerate, and one
may expect that the conclusion of Shah's theorem holds under
this weaker hypothesis. This was proved by Shah in the case $n=2$ by
adapting the 
method of \cite{s09}. We obtain a simpler proof. That is we show: 
\begin{thm}\label{prop;curve}
Let $\varphi : [0,1]\to \mathbb R^2$ be a
nondegenerate curve. 
Then for almost every $s\in [0,1]$ (with respect to Lebesgue measure), $\varphi(s)$ admits no
improvement in Dirichlet's theorem. 
\end{thm}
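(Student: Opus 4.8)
The plan is to combine the Dani correspondence with the paper's measure classification theorem, following the by-now-standard route of deducing equidistribution of translated trajectories from measure rigidity together with nondivergence. Throughout put $X=\SL_3(\R)/\SL_3(\Z)$, $\Gamma=\SL_3(\Z)$, $a_t=\diag(e^t,e^t,e^{-2t})$, and for $\bv\in\R^2$ let $u(\bv)=\left(\begin{smallmatrix} I_2 & \bv\\ 0 & 1\end{smallmatrix}\right)$ and $x_\bv=u(\bv)\Gamma$. A direct manipulation of Dirichlet's inequality (cf.\ \cite{KW_di,s09}) shows that $\bv$ admits a $\sigma$-improvement if and only if the forward orbit $\{a_tx_\bv:t\ge 0\}$ is eventually contained in $\{x\in X:x\text{ has a nonzero vector of sup-norm}<\sigma'\}$ for a suitable $\sigma'=\sigma'(\sigma)<1$; hence, writing $\Delta(x)$ for the length of a shortest nonzero vector of the unimodular lattice $x$, the vector $\bv$ admits no improvement if and only if $\limsup_{t\to\infty}\Delta(a_tx_\bv)=1$ (the inequality $\le 1$ being Dirichlet's theorem itself). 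It therefore suffices to show that for Lebesgue-a.e.\ $s\in[0,1]$ the orbit $\{a_tx_{\varphi(s)}:t\ge0\}$ is recurrent to the open set $\{\Delta>\rho\}$ for every $\rho<1$; in fact I will aim for equidistribution. The exceptional parameter set is the countable union, over $N\in\N$ and rational $\rho\in(0,1)$, of the sets $E_{N,\rho}=\{s\in[0,1]:\Delta(a_tx_{\varphi(s)})<\rho\text{ for all }t\ge N\}$, so it is enough to prove that each $E_{N,\rho}$ is Lebesgue-null.

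Assume for contradiction that $\lambda(E_{N,\rho})>0$ for some $N,\rho$. By the Lebesgue density theorem and the nondegeneracy hypothesis, fix a density point $s_0$ of $E_{N,\rho}$ at which $\varphi'(s_0),\varphi''(s_0)$ are linearly independent. For small $\eta>0$ let $\nu_\eta$ be the normalized pushforward of Lebesgue measure on $E_{N,\rho}\cap[s_0-\eta,s_0+\eta]$ under $s\mapsto x_{\varphi(s)}$, and set $\mu_{\eta,T}=\frac1T\int_0^T(a_t)_*\nu_\eta\,dt$. Along suitable sequences $\eta_k\to0$, $T_k\to\infty$, pass to a weak-$*$ limit $\mu$ of $\mu_{\eta_k,T_k}$; it has three relevant properties. (i) $\mu$ is a probability measure: since $s_0$ is a density point, $\nu_\eta$ is close in total variation to normalized Lebesgue on the full arc $\{x_{\varphi(s)}:|s-s_0|\le\eta\}$, so the Kleinbock--Margulis nondivergence estimates for nondegenerate curves, which are uniform over small arcs near $s_0$, forbid escape of mass; this is one of the two essential uses of nondegeneracy. (ii) $\mu$ is invariant under a two-dimensional non-abelian subgroup $H\le\SL_3(\R)$. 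Invariance under $\{a_t:t\in\R\}$ follows from the time-averaging together with $T_k\to\infty$ (the translates $(a_s)_*\mu_{\eta,T}$ and $\mu_{\eta,T}$ differ only by boundary terms of mass $O(|s|/T)$). Invariance under the one-parameter unipotent subgroup tangent at $s_0$ to the curve, $U_{s_0}=\{u(r\varphi'(s_0)):r\in\R\}$, follows from the linearization/stretching analysis of the translated arcs, using that $\Ad(a_t)$ expands the Lie-algebra direction tangent to $U_{s_0}$ by $e^{3t}$ while the nondegeneracy keeps the transverse second-order deviation of $\varphi$ under control (whence $\varphi\in C^2$ suffices); harvesting this unipotent invariance when $\varphi$ is genuinely curved, rather than a line, is the delicate point. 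Since $\Ad(a_t)$ acts nontrivially on $\mathrm{Lie}(U_{s_0})$, the group $H=\{a_t\}\ltimes U_{s_0}$ is two-dimensional and non-abelian. (iii) $\mu\ne m_X$, the Haar measure: for $s\in E_{N,\rho}$ and $t\ge N$ one has $a_tx_{\varphi(s)}\in\{\Delta<\rho\}$, so $\mu_{\eta,T}(\{\Delta\ge\rho\})\le N/T$ and hence $\mu(\{\Delta>\rho\})=0$, whereas $\{\Delta>\rho\}$ is a nonempty open set (as $\rho<1$ lies below the maximum of $\Delta$ on $X$) and therefore has positive Haar measure.

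Now apply the measure classification theorem for $H$-invariant measures on $X$. It forces $\mu$---or, passing to ergodic components, each ergodic component of $\mu$---to be the homogeneous measure carried by a closed orbit $Lx$ with $H\le L\le\SL_3(\R)$ and $Lx$ of finite $L$-invariant volume. Since $\mu\ne m_X$, the ergodic component with $L=\SL_3(\R)$ cannot carry full mass, so $\mu$ assigns positive mass to a countable union of proper homogeneous subvarieties of $X$. On the other hand, a weak-$*$ limit of translates $(a_t)_*(\text{Lebesgue on an arc of }\varphi)$ assigns no mass to any proper homogeneous subvariety, by the nonplanarity of nondegenerate curves together with the Dani--Margulis/Kleinbock--Margulis linearization estimates on avoidance of neighborhoods of subvarieties---the second essential use of nondegeneracy. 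This contradiction proves $\lambda(E_{N,\rho})=0$ for all $N$ and $\rho$, and the theorem follows.

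The hard part is everything that feeds the measure classification theorem: extracting genuine two-dimensional non-abelian invariance of $\mu$ from a curve that is truly curved rather than straight, and then excluding the intermediate subgroups $L$ in the last step. Because a planar curve is not a line, the unipotent invariance can only be harvested locally, on arcs shrinking at a rate tuned to the expansion by $a_t$, and tracking this localization while retaining enough mass, preserving the correct invariance, and ruling out proper $L$ via nonplanarity is where the real effort lies; the measure classification theorem is precisely the device that turns this qualitative picture into the clean dichotomy ``Haar, or concentrated on a proper subvariety''.
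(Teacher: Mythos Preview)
Your overall architecture---nondivergence plus invariance plus measure classification, then Dani correspondence---matches the paper's, but you misread the measure classification result and this drives you into an unnecessary and problematic detour. Theorem~\ref{thm;measure} is \emph{unique ergodicity}: the only $FU$-invariant Borel probability measure on $X$ is Haar. With your $F=\{a_t\}$ and $U=U_{s_0}$ the hypotheses are satisfied ($a_t$ has no nonzero fixed vector in $\R^3$, it normalizes $U_{s_0}$, and $FU_{s_0}$ is nonabelian), so once your (i) and (ii) are in hand, $\mu=m$ follows immediately and contradicts (iii). There are no intermediate $L$ to exclude; that work is already done inside Theorem~\ref{thm;measure}. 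Your final step invoking ``linearization estimates on avoidance of neighborhoods of subvarieties'' is therefore superfluous, and as written it is close to circular: the assertion that weak-$*$ limits of $a_t$-translates of arc-Lebesgue give no mass to proper homogeneous subvarieties is essentially the equidistribution statement (Theorem~\ref{prop;equicurve}) that you are trying to prove, and it does not follow from Kleinbock--Margulis nondivergence alone. With this correction your contradiction argument is valid.

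The paper's execution is also different in two respects worth noting. First, it argues directly rather than by contradiction: it proves equidistribution of the time-averages for \emph{any} absolutely continuous $\nu$ on $[0,1]$ (Theorem~\ref{prop;equicurve}) and then applies Proposition~\ref{prop;dynamical2}; there is no restriction to $E_{N,\rho}$ and no density point. Second, and this is the main technical device you are missing, the paper does not shrink arcs in order to freeze the tangent direction. Instead it introduces a twist $z(s)\in\SL_3(\R)$, commuting with $f_t$, chosen so that $z(s)$ sends $\varphi'(s)$ to $\mathbf e_1$; working with the twisted measure $\nu_\varphi$ supported on $\{z(s)\bar u(\varphi(s))\}$, a short computation (Lemma~\ref{lem;cunipotent}) gives $U_{13}$-invariance of any limit uniformly over the whole interval. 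Untwisting at the end (the $z(s_k)^{-1}$ partition in the proof of Theorem~\ref{prop;equicurve}) recovers the original statement. Your shrinking-arc route can be made to work, but it requires balancing $\eta_k\to0$ against $T_k\to\infty$ so that both the tangent-direction error $O(\eta_k)$ and the boundary loss $O(1/(T_k\eta_k))$ vanish; the twist is precisely what lets the paper avoid this double-limit bookkeeping.
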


A similar proof of Theorem \ref{prop;curve} was obtained independently
by Manfred Einsiedler. 

Our proofs rely on results in homogeneous dynamics. Before stating
them we introduce some notation, to be used in \S\ref{sec;intro}--\S\ref{sec: curve}. 
Let $G := \SL_3(\R), \, \Gamma := \SL_3(\Z), \, X := G/\Gamma$, so that
$X$ is the space of unimodular lattices in $\R^3$. This is a space on
which any subgroup of $G$ acts by left-translations preserving the
$G$-invariant Borel 
probability measure $m$ induced by Haar measure on $G$. 
 For $\bv=(v_1, v_2)^{\mathrm{tr}}\in \mathbb R^2$, $t\in \mathbb R$
 and $\br = (r_1,
r_2)\in \mathbb R^2_{>0}$  with $r_1+r_2=1$,  we set 
\begin{equation}\label{eq;notation}
f_{t}^{(\br)}:=
\left(
\begin{array}{ccc}
e^{r_1t} & 0 & 0 \\
0 & e^{r_2t} & 0 \\
0 & 0 & e^{-t}
\end{array}
\right),
\quad
u(v_1, v_2):=u(\mathbf v):=
\left(
\begin{array}{ccc}
1 & 0 & v_1 \\
0 & 1 & v_2 \\
0 & 0 & 1
\end{array}
\right),
\end{equation}
and let $\bar{u} = \pi \circ u, $ where $\pi: G \to G/\Gamma$ is the
natural quotient map. Theorem \ref{thm;dirichlet} follows from: 
\begin{thm}\label{thm;equiline}
Let $x_0\in X$,  $a, b \in \R$ and let $I , J\subset \R$ be bounded intervals, and
suppose there is a compact $K \subset X$ such that 
\begin{equation}\label{eq: weird condition}
\text{for all } t
\geq 0 \text{ there is } s_t \in J \text{ with } f_t^{(\br)} {u}(s_t, as_t+b)x_0
\in K.
\end{equation}
Let $\nu$ be a probability measure on $I$ which is absolutely
continuous with respect to Lebesgue measure. 
  Then 
%for any $x_0 \in X$ 
%probability measure $\nu$ on $\R$ which is absolutely
 % continuous with respect to Lebesgue measure, any $x_0 \in X$, 
%  interval 
%$I$ with
%positive length   
%and 
for any $\psi\in C_c(X)$ one has
\[
\frac{1}{T}\int_0^{T}\int_I  \psi(f_t^{(\br)} {u}(s, as+b)x_0)\, d\nu(s)\, dt\to_{T
  \to \infty}
\int _X \psi \, dm; 
\]
that is, $\frac{1}{T} \int_0^T \left(f_{t}^{(\br)} \right)_* \bar{\nu} \, dt \to_{T \to \infty} m$ in the weak-*
topology on Borel probability measures on $X$, where $\bar{\nu}$ is
the image of $\nu$ under the map $s  \mapsto {u}(s, as+b)x_0. $
\end{thm}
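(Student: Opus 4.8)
The plan is to fix a weak-$*$ limit $\mu$ of the family $\mu_T := \frac1T\int_0^T (f_t^{(\br)})_*\bar\nu\, dt$ along a subsequence $T_k\to\infty$, and to show that $\mu = m$; since the set of probability measures on the one-point compactification of $X$ is weak-$*$ compact, convergence along every subsequence to $m$ gives the full statement. The first task is to rule out escape of mass. The map $s\mapsto u(s,as+b)$ is affine, hence $(C,\alpha)$-good in the sense of Kleinbock--Margulis, so the quantitative non-divergence estimates of Dani--Margulis and Kleinbock--Margulis apply to the trajectories $s\mapsto f_t^{(\br)}u(s,as+b)x_0$: there are $\alpha>0$, and for each $\vre>0$ a compact $K_\vre\subset X$ and a threshold $\rho_0>0$, such that for every $t\ge 0$ and every bounded interval $B\supseteq I\cup J$, if the relevant primitive-vector functionals of $f_t^{(\br)}u(\cdot\,,a\cdot+b)x_0$ do not all drop below $\rho_0$ on $B$, then $\bigl|\{s\in B: f_t^{(\br)}u(s,as+b)x_0\notin K_\vre\}\bigr|\le \vre\,|B|$. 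Hypothesis \eqref{eq: weird condition} supplies, for each $t$, a point $s_t\in J\subseteq B$ with $f_t^{(\br)}u(s_t,as_t+b)x_0\in K$, which furnishes exactly the required non-escape witness with $\rho_0=\rho_0(K)$. Integrating against $\nu$ (whose density is bounded) and averaging in $t$ yields $\mu_T(X\setminus K_\vre)\le C\vre$ uniformly in $T$, so $\mu$ is a probability measure.

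Next I establish invariance of $\mu$. A Cesàro telescoping argument shows at once that $\mu$ is invariant under $\{f_t^{(\br)}:t\in\R\}$. For the transverse direction I run the standard shearing argument: writing $u(\eta)f_t^{(\br)}=f_t^{(\br)}u(\delta(t))$ with $\delta(t)=(e^{-(r_1+1)t}\eta_1,\,e^{-(r_2+1)t}\eta_2)\to 0$, decomposing $u(s+\delta_1,as+b+\delta_2)$ as (a transverse unipotent)$\,\cdot\,u(s',as'+b)$ after the reparametrization $s'=s+\delta_1$ (or $s'=s+\delta_2/a$), absorbing the resulting tiny shift into the density of $\nu$ (harmless since an absolutely continuous measure is continuous in $L^1$ under translation), and conjugating the leftover transverse unipotent back through $f_t^{(\br)}$, one computes that $(u(\eta))_*\mu$ equals the push-forward of $\mu$ under a one-parameter unipotent subgroup $W$: when $r_1>r_2$ one gets $W=\{u(c,0):c\in\R\}$, when $r_1<r_2$ one gets $W=\{u(0,c):c\in\R\}$, and when $r_1=r_2$ one gets $W=\{u(c,ac):c\in\R\}$. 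In every case $\mu$ is invariant under the group $H$ generated by $\{f_t^{(\br)}:t\in\R\}$ and $W$, which is a two-dimensional non-abelian solvable subgroup of $G$.

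Finally I invoke the measure classification theorem for $H$-invariant probability measures on $X$ (the classification established elsewhere in the paper): each ergodic component of $\mu$ is either $m$ or a homogeneous measure carried by a proper closed $H$-invariant subset, and the latter are of an explicit type (lattices with a marked rational subspace compatible with the coordinate flag fixed by $H$). It then remains to exclude the exceptional components. For this I would combine the quantitative non-divergence of the first step with the linearization method of Dani--Margulis (in the form used by Shah for translated curves): if a positive fraction of the mass of the averages $\mu_T$ concentrated near one of the exceptional sets, then for $\nu$-many parameters $s$ the trajectory $t\mapsto f_t^{(\br)}u(s,as+b)x_0$ would spend a positive proportion of time in a shrinking neighborhood of that set, and the resulting algebraic constraint on the expanded line would either contradict \eqref{eq: weird condition} or force $\nu$ to charge a proper real-analytic subset of $I$, contradicting absolute continuity. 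Hence $\mu=m$.

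I expect the last step to be the main obstacle: transferring the measure-theoretic exclusion of exceptional ergodic components into a statement about this specific averaging family — that is, the tube/linearization estimate controlling the mass that $\mu_T$ can deposit near the $H$-invariant subvarieties — is the technical heart of the argument. The non-divergence input in the first step is classical, but also requires care, since a line in $\R^2$ need not be nondegenerate; it is precisely here that hypothesis \eqref{eq: weird condition} is doing essential work.
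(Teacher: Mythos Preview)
Your first two steps --- non-divergence via hypothesis \eqref{eq: weird condition} together with the Dani--Margulis/Kleinbock--Margulis estimates, and unipotent invariance via the shearing argument --- match the paper's Lemmas \ref{lem;nonescape} and \ref{lem;invariance} closely. (One small omission: when $r_1<r_2$ and $a=0$ your reparametrization $s'=s+\delta_2/a$ is unavailable; the paper treats this case separately and obtains invariance under $U_{13}$ rather than $U_{23}$.)

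The real divergence is in your third step. You describe the paper's measure classification as giving ``either $m$ or a homogeneous measure carried by a proper closed $H$-invariant subset'', and then propose to exclude the exceptional components via linearization. But Theorem \ref{thm;measure} says more: the $FU$-action on $X$ is \emph{uniquely ergodic}, i.e.\ $m$ is the \emph{only} $FU$-invariant Borel probability measure. So once your steps 1 and 2 establish that $\mu$ is an $FU$-invariant probability measure, $\mu=m$ follows immediately and there is nothing further to exclude. The paper's proof of Theorem \ref{thm;equiline} is accordingly only a few lines long.

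This is not a cosmetic shortcut. The linearization step you flag as ``the main obstacle'' is exactly what the unique ergodicity result is engineered to avoid, and for a \emph{line} that step is genuinely problematic. A line in $\R^2$ is degenerate: it can lie entirely inside the singular subvarieties that arise in the Dani--Margulis linearization, so the usual dichotomy (``either the expanded curve escapes the tube, or the parameter is confined to a proper analytic subset'') collapses. Hypothesis \eqref{eq: weird condition} is a recurrence condition at \emph{some} point of the (extended) line, which is enough for non-divergence but does not by itself prevent the whole line from sitting in a tubular neighborhood of an intermediate orbit. So the route you sketch --- natural for nondegenerate curves, and indeed Shah's route --- does not obviously close for lines; this is the whole reason the paper goes through Theorem \ref{thm;measure}.
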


Similarly, Theorem \ref{prop;curve} follows from: 
\begin{thm}\label{prop;equicurve}
Let $\varphi: [0,1]\to \mathbb R^2$ be a nondegenerate curve. 
 Then for  any
$\psi\in C_c(X)$ and any probability measure $\nu$ on $[0,1]$ which is
absolutely continuous with respect to Lebesgue measure, one has
\[
\frac{1}{T}\int_0^{T}\int_0^1 \psi \left(f_t^{(\br)}\bar{u}(\varphi(s)) \right) \,  d\nu(s)\, dt\to_{T
  \to \infty}\int _X \psi\, dm. 
\]
\end{thm}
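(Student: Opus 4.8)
I would deduce the assertion from Theorem~\ref{thm;equiline}, analysing the weak-$*$ limit points of $\mu_T:=\frac1T\int_0^T\left(f_t^{(\br)}\right)_*\bar\nu\,dt$, where $\bar\nu$ is the pushforward of $\nu$ under $s\mapsto\bar u(\varphi(s))$; the conclusion is precisely that $\mu_T\to m$. The first step is to localize: it suffices to show that for a.e.\ $s_0\in[0,1]$ there is an open interval $I_{s_0}\ni s_0$ such that $\mu_T\to m$ for every absolutely continuous probability measure $\nu$ supported on $I_{s_0}$, since a general such $\nu$ is a countable convex combination of measures of this type and one then concludes by dominated convergence (the $\psi$ being bounded). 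We take the exceptional null set to be the complement of the set of nondegenerate points $s_0$, where $\varphi'(s_0),\varphi''(s_0)$ are independent; by hypothesis this set is conull. Near such an $s_0$ we may, after interchanging the coordinates of $\R^2$ if necessary (which only interchanges $r_1$ and $r_2$), discarding a further null set so that $\varphi_1'\neq 0$, reparametrizing $\varphi$ by its first coordinate, and translating $\R^2$ by a constant vector (i.e.\ left-translating by a fixed element of $u(\R^2)$), assume $\varphi(s)=(s,g(s))$ with $g\in C^2$, $g''(s_0)\neq 0$, $\varphi(s_0)=0$, and $r_1\ge r_2$.

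Fix such $s_0, I_{s_0}$ and let $\mu$ be a weak-$*$ limit of $\mu_{T_k}$, $T_k\to\infty$. I would record three properties of $\mu$.
\emph{(i) $\mu$ is a probability measure.} By the Kleinbock--Margulis non-divergence estimates for nondegenerate curves, for every $\delta>0$ absolute continuity of $\nu$ yields a compact $K\subset X$ with $\nu\big(\{s\in I_{s_0}:f_t^{(\br)}\bar u(\varphi(s))\notin K\}\big)<\delta$ for all $t\ge 0$, hence $\mu_T(X\setminus K)<\delta$ for all $T$, hence $\mu(X)=1$.
\emph{(ii) $\mu$ is invariant under $A:=\{f_t^{(\br)}:t\in\R\}$}, since $\left(f_s^{(\br)}\right)_*\mu_T-\mu_T\to 0$ for each fixed $s$.
\emph{(iii) $\mu$ is invariant under the one-parameter unipotent subgroup $U:=\{u(\eta,0):\eta\in\R\}$.} This is a shearing argument: for fixed $\eta$,
\[
u(\eta,0)\,f_t^{(\br)}\bar u(\varphi(s))
=f_t^{(\br)}u\!\big(s+\eta e^{-(r_1+1)t},\,g(s)\big)\Gamma
=u(0,\delta_t(s))\cdot f_t^{(\br)}\bar u\!\big(\varphi(s+\eta e^{-(r_1+1)t})\big),
\]
where $\delta_t(s)=-\big(g(s+\eta e^{-(r_1+1)t})-g(s)\big)e^{(r_2+1)t}=O\!\big(e^{(r_2-r_1)t}\big)$, which tends to $0$ uniformly in $s\in I_{s_0}$ when $r_1>r_2$ (only $g\in C^1$ is needed here); the isotropic case $r_1=r_2$ is handled by a two-dimensional version of the shearing that uses $g''(s_0)\neq 0$ and gives invariance under all of $u(\R^2)$. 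Consequently $(u(\eta,0))_*\left(f_t^{(\br)}\right)_*\bar\nu$ differs from $\left(f_t^{(\br)}\right)_*\bar\nu$ by a vanishing amount (as $t\to\infty$) together with the effect of translating $\nu$ by $\eta e^{-(r_1+1)t}$, which also vanishes since translation is continuous in $L^1$; averaging over $t\in[0,T]$ gives $(u(\eta,0))_*\mu=\mu$.

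Finally I would combine this with Theorem~\ref{thm;equiline}. By invariance of $\mu$ under $A$ and under $U$, for every $T>0$ and every absolutely continuous probability measure $\nu'$ on a bounded interval $I$,
\[
\int_X\psi\,d\mu=\int_X\left(\frac1T\int_0^T\int_I\psi\big(f_t^{(\br)}u(s,0)x_0\big)\,d\nu'(s)\,dt\right)d\mu(x_0).
\]
For $\mu$-a.e.\ $x_0$ the recurrence hypothesis \eqref{eq: weird condition} holds with $a=b=0$ and suitable $K,J$: this follows from $\mu$ being a probability measure that inherits the uniform-in-$t$ non-divergence of the $\mu_T$, together with the $A$-invariance of $\mu$, by a maximal-inequality/recurrence argument along the $A$-flow. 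Theorem~\ref{thm;equiline} then applies for each such $x_0$, so the inner bracket converges to $\int_X\psi\,dm$; being bounded by $\|\psi\|_\infty$, it follows by dominated convergence that $\int_X\psi\,d\mu=\int_X\psi\,dm$. Hence every weak-$*$ limit of $\mu_T$ is $m$, which is the claim.

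\textbf{Main obstacle.} The delicate points are item (iii) --- making the shearing estimate, and the accompanying $L^1$-translation error for $\nu$, uniform over the fixed interval $I_{s_0}$ and over the time average, and especially the isotropic case $r_1=r_2$ where one must exploit the curvature $g''\neq 0$ to obtain full $u(\R^2)$-invariance --- and the verification of \eqref{eq: weird condition} for $\mu$-a.e.\ $x_0$, i.e.\ promoting the uniform non-divergence of the approximating measures to a genuine recurrence statement for $\mu$-generic trajectories. Nondegeneracy of $\varphi$ enters decisively in (i) and (through the shearing) in (iii); once $\mu$ is known to be an $AU$-invariant probability measure, Theorem~\ref{thm;equiline} does the rest.
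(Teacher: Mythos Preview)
Your overall strategy---localize, prove non-escape via Kleinbock--Margulis, prove unipotent invariance by shearing, then identify the limit---is the paper's strategy, and for $r_1>r_2$ your argument for (iii) is correct and in fact slightly cleaner than the paper's partition argument. However, there are two genuine gaps.

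\textbf{The isotropic case.} Your claim that when $r_1=r_2$ a ``two-dimensional shearing using $g''(s_0)\neq0$'' yields invariance under all of $u(\R^2)$ is not justified, and the direct computation does not give it. The shearing identity here reads
\[
u(\eta,\,g'(s)\eta)\,f_t\bar u(\varphi(s))
=u\big(0,O(e^{-3t/2})\big)\,f_t\bar u\big(\varphi(s+\eta e^{-3t/2})\big),
\]
so the invariance direction $u(1,g'(s))$ \emph{depends on $s$}; averaging over $s$ does not produce invariance of $\mu$ under any fixed one-parameter subgroup, let alone all of $u(\R^2)$. The curvature $g''\neq0$ does not help: taking $h\sim e^{-3t/4}$ to make $e^{3t/2}h^2$ of order one forces $e^{3t/2}h\to\infty$. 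The paper resolves this by twisting: pick $M(s)\in\SL_2(\R)$ with $M(s)\varphi'(s)=\mathbf e_1$, set $z(s)=\mathrm{diag}(M(s),1)$, and push $\nu$ forward by $s\mapsto z(s)\bar u(\varphi(s))$. Since $z(s)$ commutes with $f_t$ in the isotropic case, the shearing now yields genuine $U_{13}$-invariance of the limit of the twisted averages; one then untwists using continuity of $z$.

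\textbf{The final step.} Your reduction to Theorem~\ref{thm;equiline} requires \eqref{eq: weird condition} for $\mu$-a.e.\ $x_0$ with $a=b=0$: for some compact $K$ and bounded $J$, the segment $\{u(e^{(r_1+1)t}s,0)f_tx_0:s\in J\}$ must meet $K$ for \emph{every} $t\ge0$. Poincar\'e recurrence or Birkhoff along $A$ give this only for a density-one set of times, and the uniform non-divergence of the $\mu_T$'s is a statement about measures, not about individual trajectories $t\mapsto f_tx_0$; the ``maximal-inequality/recurrence'' sketch does not close this. The paper avoids the issue entirely: once your (i)--(iii) show $\mu$ is a probability invariant under $F=\{f_t^{(\br)}\}$ and a one-parameter unipotent $U$ normalized by $F$ with $FU$ nonabelian, Theorem~\ref{thm;measure} applies directly and gives $\mu=m$. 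Since Theorem~\ref{thm;equiline} is itself proved via Theorem~\ref{thm;measure}, routing through it gains nothing and costs you an unverified hypothesis.
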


\begin{comment}
\begin{thm}\label{thm;pointwise}
Let $a\in \mathbb R\setminus \mathbb Q$,   $(b_1, b_2)\in \mathbb R^2$
and $\mathbf a= (a_1, a_2)\in \mathbb R^2_{>0}$ with $a_1+a_2=1$. 
Then 
for almost every 
$s\in \mathbb R$ and any $\psi\in C_c(X)$ we have
\begin{equation}\label{eq;equidis}
\frac{1}{T}\int_0^T \psi (f_{t\mathbf a} u(s+ b_1, as +b_2 )x_0)\, dt=\int_X \psi (x)\, dx.
\end{equation}
\end{thm}

\end{comment}

Theorems \ref{thm;equiline} and \ref{prop;equicurve} in turn follow
from the following measure classification result: 

\begin{thm}\label{thm;measure}
Let $U$ (resp.~$F$) be 
a one parameter unipotent (resp.~diagonalizable) 
   subgroup of $G$. 
Suppose that 
$U$ is normalized by $F$, $FU$ is nonabelian and
$F$ does not fix any nonzero vector of $\mathbb R^3$. 
Then the action of $FU$ on $X$ is uniquely ergodic, i.e. $m$ is the only $FU$-invariant probability measure on $X$. 
\end{thm}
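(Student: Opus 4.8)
Plan for proving Theorem \ref{thm;measure}The plan is to analyze the structure of the two-dimensional group $FU$ and then combine Ratner-type rigidity for the unipotent part with a linearization/non-divergence argument to pin down the measure.

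First I would understand the algebraic structure. Since $U$ is a one-parameter unipotent subgroup of $\SL_3(\R)$ normalized by the one-parameter diagonalizable group $F=\{f_t\}$, and $FU$ is nonabelian, the adjoint action of $f_t$ on the Lie algebra $\mathfrak{u}$ of $U$ is by a nontrivial character $e^{\lambda t}$ with $\lambda \neq 0$. Up to conjugation I would normalize so that $F$ is a diagonal one-parameter subgroup $\diag(e^{a_1 t}, e^{a_2 t}, e^{a_3 t})$ with $a_1+a_2+a_3=0$, and $U$ lies in a single root space for this torus. The hypothesis that $F$ fixes no nonzero vector means the $a_i$ are not all zero and in fact (since $\sum a_i = 0$) at most... one can check the relevant cases: the generic case is $a_1 > a_2 > a_3$, but degenerate cases like $a_1 = a_2 > a_3$ or $a_1 > a_2 = a_3$ must also be handled. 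In each case $U$ is the root group for some root $a_i - a_j$ with $a_i > a_j$, so $f_t$ expands $U$ as $t \to +\infty$ (after possibly replacing $t$ by $-t$).

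Next, let $\mu$ be an $FU$-invariant ergodic probability measure on $X$; I want to show $\mu = m$. Since $\mu$ is $U$-invariant, by Ratner's measure classification theorem $\mu$ is homogeneous: there is a closed subgroup $H \leq G$ with $U \leq H$, and a point $x \in X$, such that $\mu$ is the $H$-invariant measure on the closed orbit $Hx$. The group $F$ normalizes $U$ and preserves $\mu$; I would argue (using that $F$ normalizes $U$ and the ergodic decomposition of $\mu$ under $F$ together with the structure of $FU$) that in fact we may take $H$ to be normalized by $F$, so $FU \leq H$ and $Hx$ is a closed $FU$-invariant (indeed $H$-invariant) orbit. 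The task reduces to ruling out proper closed subgroups $H$ with $FU \leq H \subsetneq G$ such that $Hx$ carries a finite $H$-invariant measure.

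The main obstacle is this final classification of the intermediate subgroups $H$, and this is where condition-free hypotheses get used: because $f_t$ expands $U$, the expanding horospherical subgroup $G^+ = \{g : f_{-t} g f_t \to e \text{ as } t \to +\infty\}$ contains $U$, and a finite-volume-orbit argument (Mautner phenomenon / the fact that $F$ fixes no vector in the standard representation nor — by passing to $\wedge^2$ — in its dual) should force $H$ to contain a large unipotent subgroup, eventually all of the upper or lower unipotent radical, and then combined with $F$ this generates $G$. Concretely, I would show that the Lie algebra $\mathfrak{h}$ of $H$ is $\Ad(F)$-invariant, decompose $\mathfrak{h}$ into weight spaces for $F$, and use that $\mathfrak{h}$ contains $\mathfrak{u}$ (a positive weight space) plus $\mathfrak{f}$; an $\SL_2$ or polynomial-drift argument inside $H x$ then produces additional weight vectors. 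The hypothesis that $F$ fixes no nonzero vector of $\R^3$ (equivalently, rules out the reducible cases where $H$ could be a parabolic or the stabilizer of a flag admitting a finite-volume orbit) is exactly what closes off the remaining possibilities, yielding $\mathfrak{h} = \mathfrak{g}$ and hence $\mu = m$.
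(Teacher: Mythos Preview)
Your plan has a genuine gap at the Ratner step and then sets up the wrong case analysis. Ratner's theorem classifies $U$-invariant \emph{$U$-ergodic} measures; your $\mu$ is only $FU$-ergodic, so you cannot conclude directly that $\mu$ itself is homogeneous. The paper handles this by taking the $U$-ergodic decomposition $\mu = \int m_x\, d\mu$ and then invoking Mozes' theorem (Theorem~\ref{thm;mozes}): there is a single closed subgroup $H$, generated by one-parameter unipotent subgroups, with $H_x = H$ for $\mu$-a.e.\ $x$, such that $F$ normalizes $H$ and conjugation by $F$ \emph{preserves the Haar measure on $H$}. This last volume-preservation constraint is the engine of the whole argument, and it is absent from your outline. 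It forces $\mathfrak h$ to contain eigenvectors of $\Ad(f_1)$ with eigenvalues on both sides of $1$, since $\mathfrak h \supset \mathfrak u$ already contributes a nontrivial eigenvalue. Note also that $H$ is generated by unipotents and typically does \emph{not} contain $F$; your reduction to ``rule out proper $H$ with $FU \le H$'' is the wrong problem.

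The second missing ingredient is the mechanism that actually eliminates the candidate $H$'s. After the normalization, the volume constraint leaves only a short list: a conjugate of the full unipotent $U^+$, or a copy of $\SL_2(\R)$ (the paper's $H_0$), or something that already generates $G$. Your proposal to use Mautner or a drift argument to enlarge $H$ does not dispose of these; for instance $H=H_0$ is reductive and perfectly compatible with the weight-space picture. What kills these cases in the paper is dynamical: by Poincar\'e recurrence, $\mu$-a.e.\ $F$-orbit is recurrent in both time directions, whereas Propositions~\ref{prop: upper triangular} and~\ref{prop: copy of sl2} show (via rational vectors in $\R^3$ and $\bigwedge^2 \R^3$ going to zero under $f_t$) that on any closed $U^+$-orbit, and on any closed $H_0$-orbit with $F\not\subset H_0$, the $F$-trajectory diverges in at least one time direction. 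This recurrence-versus-divergence contradiction is where the hypothesis that $F$ fixes no nonzero vector is really used, and it has no counterpart in your sketch.
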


Our method of proof allows a generalization to `Diophantine
approximation with weights', which we now describe. 
%Let $\br := (r_1,
%r_2)$ where $r_1, r_2$ are positive real numbers with $r_1+r_2=1.$
Let $\br = (r_1, r_2)^{\mathrm{tr}}$ be as above. Following \cite{kleinbock duke} we
say that $\bv \in \R^2$ is {\em badly approximable w.r.t. weights
  $\br$} if there is $c>0$ such that for all $q \in \N$, all $\bp
\in \Z^2$, and $i=1,2$  we have
$$
|qv_i - p_i|^{1/r_i} \geq \frac{c}{q}. 
$$
Also, following \cite{KW_di} we say that {\em $\bv$ admits no improvement in
  Dirichlet's theorem w.r.t. weights $\br$} if there does not exist
$\sigma \in (0,1)$ such that for all sufficiently large $Q$, there is
a solution $q \in \N,\, \bp \in \Z^2$ to the inequalities
$$
|qv_i-p_i| < \frac{\sigma}{Q^{r_i}},\ i=1,2,\ q < \sigma Q. 
$$

We show: 
\begin{thm}\label{thm: weights}
For any $\br$ as above, the following hold: \begin{itemize}
\item[(\rmnum{1})]
Suppose $\mathcal{L}$ is a line in $\R^2$ which
contains one point which is badly approximable w.r.t. weights
$\br$. Then almost every $\bv \in \mathcal{L}$ (w.r.t. the length
measure on $\mathcal{L}$) admits no improvement in Dirichlet's
theorem w.r.t. weights $\br$. 
\item[(\rmnum{2})]
Let $\varphi: [0,1] \to \R^2$ be a
nondegenerate curve. Then for almost every $s \in [0,1]$
(w.r.t. Lebesgue measure), $\varphi(s)$ admits no improvement in
Dirichlet's theorem w.r.t. weights $\br$. 
\end{itemize} 
\end{thm}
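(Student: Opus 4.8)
The plan is to derive Theorem~\ref{thm: weights} from the equidistribution statements, exactly as Theorems~\ref{thm;dirichlet} and~\ref{prop;curve} are obtained from Theorems~\ref{thm;equiline} and~\ref{prop;equicurve}, but keeping track of the weight vector $\br$ throughout. The starting point is the standard dictionary (the ``Dani correspondence'') between Diophantine conditions and the dynamics of the one-parameter diagonal flow $f_t^{(\br)}$ on $X = \SL_3(\R)/\SL_3(\Z)$. Concretely, for $\bv \in \R^2$ write $x_{\bv} := \bar u(\bv) = u(\bv)\Gamma$; then $\bv$ admits a $\sigma$-improvement of Dirichlet's theorem w.r.t.\ weights $\br$ if and only if for all large $t$ the lattice $f_t^{(\br)} x_{\bv}$ contains a nonzero vector in a fixed (shrinking with $\sigma$) box, i.e.\ $f_t^{(\br)} x_{\bv}$ eventually leaves a fixed neighborhood $\mathcal O_\sigma$ of the critical locus $\{\Lambda : \Lambda \cap B_\sigma \neq \{0\}\}$. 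Hence $\bv$ admits no improvement w.r.t.\ weights $\br$ precisely when the orbit $\{f_t^{(\br)} x_{\bv} : t \ge 0\}$ returns infinitely often to the complement of every such neighborhood; in particular it suffices to show that for a.e.\ $\bv$ on the line (resp.\ curve) the orbit $\{f_t^{(\br)} x_{\bv}\}$ is \emph{not divergent} and in fact spends a positive proportion of time in a fixed compact set --- which is exactly what equidistribution of the time averages $\frac1T\int_0^T (f_t^{(\br)})_* \bar\nu\, dt \to m$ gives.

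The execution has two parts, following the two items of the theorem. For item~(\rmnum{1}): let $\mathcal L$ be a line containing a point $\bv_0$ badly approximable w.r.t.\ weights $\br$. Bad approximability of $\bv_0$ translates (again via the Dani correspondence, in the weighted form appearing in \cite{kleinbock duke}) into the statement that the forward orbit $\{f_t^{(\br)} x_{\bv_0} : t \ge 0\}$ is bounded, i.e.\ contained in a compact set $K \subset X$. Parametrize $\mathcal L$ affinely as $s \mapsto \bv_0 + s\bw$ for a fixed direction $\bw$; after an elementary change of the affine parametrization this has the form $s \mapsto (s, as+b)$ (possibly after swapping coordinates, which amounts to swapping $r_1 \leftrightarrow r_2$, harmless since $\br$ is arbitrary), with $x_0 = u(\text{offset})x_{\bv_0}$ chosen so that $u(s, as+b)x_0 = x_{\bv_0 + s\bw}$ at the appropriate base point. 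The boundedness of $\{f_t^{(\br)} x_{\bv_0}\}$ furnishes, for the choice $s_t \equiv 0$ (or the value of $s$ corresponding to $\bv_0$), exactly hypothesis~\eqref{eq: weird condition} of Theorem~\ref{thm;equiline} with that $K$. Applying Theorem~\ref{thm;equiline} with $\nu$ Lebesgue (normalized) on a bounded interval $I$ parametrizing a segment of $\mathcal L$, we get $\frac1T\int_0^T\int_I \psi(f_t^{(\br)} u(s,as+b)x_0)\,d\nu(s)\,dt \to \int_X \psi\,dm$ for every $\psi \in C_c(X)$. A Fubini argument plus a countable exhaustion over an increasing sequence $\psi_k \uparrow 1$ then shows that for $\nu$-a.e.\ $s$ the time average $\frac1T\int_0^T \psi(f_t^{(\br)} x_{\bv(s)})\,dt$ does not converge to $0$ for some (hence any large) $\psi$, so the orbit is non-divergent and recurrent to a fixed compact set; combined with the Dani correspondence for ``no improvement'' this yields that $\nu$-a.e.\ $\bv \in \mathcal L$ admits no improvement w.r.t.\ weights $\br$. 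For item~(\rmnum{2}), the same scheme applies verbatim using Theorem~\ref{prop;equicurve} in place of Theorem~\ref{thm;equiline}; here no boundedness hypothesis is needed since nondegeneracy of $\varphi$ already powers the equidistribution, and we conclude that for Lebesgue-a.e.\ $s$, $\varphi(s)$ admits no improvement w.r.t.\ weights $\br$.

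I expect the only genuinely delicate point to be making the Dani-correspondence step fully precise \emph{with weights}: one must verify that ``$\bv \notin \bigcup_\sigma \DI_\br(\sigma)$'' is equivalent to the orbit $\{f_t^{(\br)} x_{\bv}\}$ failing to be eventually trapped in any of the neighborhoods $\mathcal O_\sigma$, and that this failure is implied by (indeed is weaker than) the recurrence-to-a-fixed-compact-set conclusion extracted from the Cesàro equidistribution. This requires the quantitative form of the correspondence — essentially that the box defining a $\sigma$-improvement shrinks continuously to $\{0\}$ as $\sigma \to 1$, so that the $\mathcal O_\sigma$ exhaust $X$ minus the discrete critical set as $\sigma \to 1$, and that there is a single $\psi \in C_c(X)$ with $\int_X \psi\,dm > 0$ vanishing on some $\mathcal O_\sigma$ — together with the measurability needed to pass from the integrated (Fubini) statement to an almost-everywhere statement in $s$. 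All of this is routine and already implicit in \cite{ds, KW_di, kleinbock duke}; the rest of the argument is the soft measure-theoretic extraction and the bookkeeping of how the affine reparametrization of $\mathcal L$ interacts with the roles of $r_1$ and $r_2$. A minor additional check is that the hypothesis in item~(\rmnum{1}) — one badly approximable point — suffices to produce the compact set $K$ in~\eqref{eq: weird condition}, which it does because boundedness of a single forward $f_t^{(\br)}$-orbit is exactly the weighted Dani reformulation of weighted bad approximability.
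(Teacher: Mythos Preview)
Your overall strategy matches the paper's: reduce part~(\rmnum{1}) to Theorem~\ref{thm;equiline} via the weighted Dani correspondence (bounded orbit for the badly approximable point gives~\eqref{eq: weird condition}), reduce part~(\rmnum{2}) to Theorem~\ref{prop;equicurve}, and then pass from the averaged equidistribution to the a.e.\ Diophantine conclusion. The first two reductions are fine.

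The gap is in the last step. Your claim that ``recurrence to a fixed compact set'' implies ``no improvement in Dirichlet's theorem'' is false: badly approximable vectors have \emph{bounded} forward $f_t^{(\br)}$-orbits, yet by Davenport--Schmidt every badly approximable vector \emph{does} admit an improvement in Dirichlet's theorem (this is exactly what the paper uses in the proof of Theorem~\ref{thm: example for dirichlet} at $s=0$). The correct dynamical characterization is that $\bv$ admits no improvement iff the orbit accumulates on the critical set $K_1$ (Proposition~\ref{prop: hajos di}), or, as a sufficient condition, that the forward orbit is dense in $X$ (\cite[Prop.~2.1]{KW_di}). Mere recurrence to an arbitrary compact set gives neither.

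Relatedly, your ``Fubini plus countable exhaustion $\psi_k\uparrow 1$'' does not extract the required a.e.\ statement: from $\int_I\bigl[\frac{1}{T}\int_0^T\psi(f_t x_s)\,dt\bigr]\,d\nu(s)\to\int\psi\,dm$ you get only integral control, not that for $\nu$-a.e.\ $s$ the inner average stays positive, let alone that it does so for $\psi$ supported near $K_1$. The paper closes this gap with Proposition~\ref{prop;dynamical2}: assume a positive-$\nu$-measure set $A$ of parameters has nondense orbit; then all orbits in $A$ miss a common open $U_i$, and applying the equidistribution statement to the absolutely continuous measure $\nu|_A$ forces its limit to equal $m$ by ergodicity of $m$ under $\{f_t^{(\br)}\}$, contradicting $\mu_0(U_i)=0$. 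Note this crucially uses that Theorems~\ref{thm;equiline} and~\ref{prop;equicurve} apply to \emph{any} absolutely continuous $\nu$, so one may restrict to $A$; your Fubini route never exploits this.
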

Theorem \ref{thm: weights}(\rmnum{2}) was proved for nondegenerate
analytic curves in $\R^n$, in \cite{s10}. 
The hypothesis of Theorem \ref{thm;dirichlet}
and \ref{thm: weights}(\rmnum{1}) can be verified in many
cases. In light of recent work of Badziahin-Velani \cite{bv} and An-Beresnevich-Velani \cite{abv},
we obtain: 
\begin{cor}
Suppose that $\mathcal L$ is a line in $\mathbb R^2$ given by the equation 
$
\mathrm{y}=a\mathrm{x}+b
$
where $a\neq 0$.
If
\begin{equation}\label{eq;condition}
\liminf_{q\to \infty}|q|^{\frac{1}{r}-\vre} \min_{\bp \in \Z^2} \| q(a,b) - \bp
\| >0
\quad \mbox{where } r=\min\{r_1, r_2\}
\end{equation}
for some $\vre >0$, then almost every $\bv \in \mathcal{L}$ admits no improvement in 
Dirichlet's theorem  w.r.t. weights $\br$. Moreover the same conclusion holds
 if $a\in \mathbb Q$ and (\ref{eq;condition}) holds for $\vre=0$.
\end{cor}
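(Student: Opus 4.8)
The plan is to derive this corollary from Theorem~\ref{thm: weights}(\rmnum{1}), so the only real content is to verify that its hypothesis --- namely that $\mathcal L$ contains at least one point which is badly approximable with respect to the weights $\br$ --- follows from the Diophantine condition \eqref{eq;condition} on the pair $(a,b)$. First I would recall the weighted transference machinery: for $\bv \in \R^2$ the badly-approximable-with-weights-$\br$ property can be read off from the behaviour of the orbit $f_t^{(\br)}\bar u(\bv)$ in $X$, and more classically it corresponds to boundedness away from the cusp, which via Mahler's criterion is equivalent to a uniform lower bound $\inf_{q\ge 1}\, q\,\max_i |qv_i - p_i(q)|^{1/r_i} > 0$ where $p_i(q)$ is the nearest integer to $qv_i$. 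So the task reduces to exhibiting one $\bv = (x, ax+b)$ on $\mathcal L$ for which this infimum is positive.

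Next I would connect the two-variable badly approximable condition on a point of $\mathcal L$ to the one-variable hypothesis \eqref{eq;condition} on $(a,b)$. The natural candidate is a point with $\mathrm{x}$-coordinate equal to $1$ times a suitable integer, or more generally one should play off the $q$-th approximation of $(a,b)$: writing $\bv=(\xi, a\xi+b)$ one has $q v_1 = q\xi$ and $q v_2 = q(a\xi+b) = (q\xi)a + qb$, so controlling $\|q\bv - \bp\|$ is essentially controlling how well $qb$ (shifted by the integer multiple $(q\xi)a$) and $q\xi$ are simultaneously approximated. Choosing $\xi$ in $\Z$ (or, when $a\notin\Q$, choosing $\xi$ irrational and aligned with the continued fraction of $a$) lets one transfer the lower bound $\liminf_{q\to\infty}|q|^{1/r - \vre}\min_{\bp}\|q(a,b)-\bp\| > 0$ into a lower bound of the shape required for $\bv$ to be badly approximable with weights $\br$; here the exponent $r = \min\{r_1,r_2\}$ and the $\vre$-slack are exactly what is needed to absorb the loss incurred when one passes from the sup-norm distance of $q(a,b)$ to the weighted quantities $|qv_i - p_i|^{1/r_i}$, since $|x|^{1/r_i} \le |x|^{1/r}$ for $|x|\le 1$. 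The case $a\in\Q$ and $\vre=0$ is handled because then the integer combination $(q\xi)a$ can be taken to contribute nothing beyond a bounded denominator, so no $\vre$ is lost. This is where I would cite \cite{bv} and \cite{abv}: their results guarantee that lines satisfying \eqref{eq;condition} are abundant (indeed, that such $(a,b)$ exist and in fact form a winning/large set), which is what makes the corollary non-vacuous, though strictly speaking the corollary as stated is a conditional implication and does not need that abundance.

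Once one point of $\mathcal L$ is shown to be badly approximable with weights $\br$, Theorem~\ref{thm: weights}(\rmnum{1}) applies verbatim and yields that almost every $\bv\in\mathcal L$ admits no improvement in Dirichlet's theorem with weights $\br$, which is the assertion of the corollary. The hypothesis $a\neq 0$ is used to ensure $\mathcal L$ is genuinely a line with both coordinates varying (so that the nondegeneracy/transversality needed in Theorem~\ref{thm: weights} holds — a horizontal or vertical line would be a degenerate case excluded there), and the formula $\mathrm y = a\mathrm x + b$ just fixes the parametrization $s\mapsto u(s, as+b)$ used throughout \S\ref{sec;intro}.

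The main obstacle I anticipate is the bookkeeping in the transference step: one must choose the auxiliary point $\xi$ on $\mathcal L$ carefully and track the weighted exponents through the reduction, making sure that the single $\vre$ of slack in \eqref{eq;condition} is enough to cover the simultaneous approximation of \emph{both} coordinates $v_1,v_2$ under the possibly unequal weights $r_1,r_2$, and separately that the rational case genuinely needs no slack. This is a finite, elementary computation with continued fractions / the geometry of numbers, but it is the only place where something beyond a direct appeal to Theorem~\ref{thm: weights}(\rmnum{1}) is required.
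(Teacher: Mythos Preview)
You have the right endgame: once $\mathcal L$ is known to contain a point that is badly approximable with respect to the weights $\br$, Theorem~\ref{thm: weights}(\rmnum{1}) applies verbatim and gives the corollary. The gap is in how you propose to produce that point, and in what you think \cite{bv} and \cite{abv} are being cited for.

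The results of Badziahin--Velani and An--Beresnevich--Velani are not about the abundance of pairs $(a,b)$ satisfying \eqref{eq;condition}. They are precisely the statement that \emph{if} $(a,b)$ satisfies \eqref{eq;condition} (with $\vre>0$, or with $\vre=0$ when $a\in\Q$) and $a\neq 0$, \emph{then} the line $y=ax+b$ meets the set of $\br$-badly approximable vectors --- in fact in a set of full Hausdorff dimension. That is the entire proof the paper has in mind: invoke \cite{bv,abv} to obtain a point of $\mathcal L$ which is badly approximable with weights $\br$, then apply Theorem~\ref{thm: weights}(\rmnum{1}). The hypothesis $a\neq 0$ is a hypothesis of \cite{bv,abv}, not of Theorem~\ref{thm: weights}.

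Your proposed ``finite, elementary computation with continued fractions / the geometry of numbers'' is therefore not elementary at all: exhibiting even one weighted-badly-approximable point on a prescribed planar line is exactly Davenport's problem, open for decades before \cite{bv}. Your concrete suggestions fail immediately. Taking $\xi\in\Z$ makes the first coordinate of $\bv=(\xi,a\xi+b)$ an integer, so $\min_{p_1}|qv_1-p_1|=0$ for every $q$ and $\bv$ is certainly not badly approximable. Aligning $\xi$ with the continued fraction of $a$ controls one coordinate but gives no leverage on the other; there is no soft transference that turns the simultaneous condition \eqref{eq;condition} on $(a,b)$ into an explicit badly approximable point on $\mathcal L$. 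The constructions in \cite{bv,abv} are genuinely intricate Cantor-type / Schmidt-game arguments, and that is what is being cited.
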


In \S \ref{sec: examples} we give several examples showing the
necessity of the hypotheses in our
theorems. In particular we show in Theorem \ref{thm: example for dirichlet}, that the analog of
Theorem \ref{thm;dirichlet} fails in dimension $n=3.$ 

\medskip

{\bf Acknowledgements.} We are grateful to Jinpeng An,  Manfred Einsiedler, Dmitry
Kleinbock and Elon Lindenstrauss for helpful discussions.  
%{\color{blue} Feel free to add any more acknowledgements.} 

 \section{Invariant measure for solvable groups}
In this section we prove Theorem \ref{thm;measure}. As we will show in
\S \ref{sec: examples}, it is not possible to relax the
hypotheses of the theorem.

Let the notation be as in the statement of Theorem \ref{thm;measure},
and let $F=\{f_t: t \in \R\}$ where $t\mapsto f_t$ is a group homomorphism from
$\mathbb R\to F$. 
% so that $U$ is contained in the expanding horospherical 
%subgroup of $g_1$, i.e.~
%\[
%U\subset U^+:=\{h\in G: f_{-t}hf_t\to_{ t\to \infty} e\}
%\]
%where $e$ is the identity in $G$. 
%Note that we do not restrict ourselves at this point to $f_t$ as in
%\eqref{eq;notation}. 
Let $\mu$ be an $FU$-invariant Borel probability
measure on $X$. Our goal is to show that $\mu=m$, and we can assume
with no loss of generality that $\mu$ is ergodic for the action of
$FU$. 

We can decompose $\mu$ into its $U$-ergodic components. That is we
write $\mu=\int _X m_x\, d\mu(x)$ where each $m_x$ is $U$-invariant
and ergodic. According to Ratner's measure classification theorem
\cite{ratner}, for every $x$ there is a closed connected subgroup
$H=H_x$ such that $\overline{Ux} = Hx$ and $m_x$ is the unique
$H$-invariant measure on $Hx$ induced by the Haar measure on
$H$. Also, since $\mu$ is $F$-invariant, by the Poincar\'e recurrence
theorem, for almost every $x$ and $m_x$-a.e. $y$, the orbit $Fy$ is
recurrent in both positive and negative times, i.e. there are $t_n \to +\infty$ and $t'_n \to -\infty$ such
that 
\begin{equation}\label{eq: Poincare} 
f_{t_n}y \to y \text{ and } f_{t'_n} y\to y.
\end{equation}  

We will need the following result:
%Let $\mathcal P(X)$ be the space of probability measures on $X$ with the weak$^*$
%topology.  
%The group $G$ acts continuously on $\mathcal P(X)$ via the pushforward map
%$\nu\to g\nu$. 
%For every $x\in X$ let $m_x$ be the homogeneous measure associated to  $\overline{Ux}$, i.e.~there
%exists a (unique) connected subgroup $H_x$ of $G$ such that  $\overline{Ux}=H_x x$ and $m_x$ is the unique
%$H_x$ invariant probability measure supported on $H_x x$.
%We claim that the map $\kappa:X\to \mathcal P(X)$ where $\varphi(x)=m_x$ is measurable. 
%({\color{red} find a reference or prove it using linearization method})
%Moreover we have the following ergodic decomposition: 
%\[
%\mu=\int _X m_x\, d\mu(x).
%\]
%Since $F$ normalizes $U$ we have 
%$f_tm_x=m_{f_tx}$ for every $x\in X$. In other words the following diagram commutes:
%\[
%  \xymatrix{
%        X \ar[r]^\kappa \ar[d]_{f_t} & \mathcal P(X) \ar[d]^{f_t} \\
%        X \ar[r]_{\kappa}       & \mathcal P(X) }
%\]
%Therefore the probability measure $\kappa\mu$ on $\mathcal P(X)$ is $F$ invariant and ergodic.  

\begin{thm}[Mozes \cite{m95}, see also \cite{mt96}]\label{thm;mozes}
There exists %$X'\subset X$ with $\mu(X')=1$
%and 
a closed subgroup $H$ of $G$ generated by one-parameter unipotent
subgroups and containing $U$ 
such that the following hold:
\begin{enumerate}[label=(\roman*)]
\item For $\mu$-almost every $x\in X$ we have $H_x=H$.
\item The group $H$ is normalized by $F$ and conjugation by $F$ preserves the Haar measure of $H$.
%\item The measure $m_x$ is generic, i.e.~$\frac{1}{T}\int_0^T f_t m_x\, dt\to \mu $
%as $T\to \pm \infty$.
\end{enumerate}
 \end{thm}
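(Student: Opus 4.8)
The plan is to analyse the $U$-ergodic decomposition $\mu=\int_X m_x\,d\mu(x)$ recalled in the text, and its interaction with the $F$-action, following the circle of ideas in \cite{m95,mt96}. By Ratner's theorem each $m_x$ is the $H_x$-homogeneous probability measure on $H_xx$, where $H_x$ is closed, connected, contains $U$, and is generated by one-parameter unipotent subgroups; moreover one may choose $x\mapsto H_x$, equivalently $x\mapsto \mathfrak h_x:=\mathrm{Lie}(H_x)$, to be Borel. The backbone will be two equivariance properties. First, $x\mapsto H_x$ is $U$-invariant mod $\mu$: for $u\in U\subseteq H_x$ one has $U(ux)=(Uu)x=Ux$, so $\overline{U(ux)}=H_xx=H_x(ux)$ and hence $H_{ux}=H_x$. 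Second, since $F$ normalizes $U$ we have $U(f_tx)=f_t(Ux)$, so $(f_t)_*m_x$ is a $U$-invariant ergodic measure which, by $F$-invariance of $\mu$, equals $m_{f_tx}$; since $(f_t)_*m_x$ is the homogeneous measure on $(f_tH_xf_t^{-1})(f_tx)$, uniqueness in Ratner's theorem gives $H_{f_tx}=f_tH_xf_t^{-1}$ for $\mu$-a.e.\ $x$ and every $t$.

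The heart of the matter is to upgrade this last relation to genuine $F$-normality of $H_x$. Since $x\mapsto\dim H_x$ is $FU$-invariant and takes finitely many values, $FU$-ergodicity makes it constant, say $\equiv d$, so that $\Phi(x):=[\bigwedge^{d}\mathfrak h_x]$ is a well-defined Borel map from $X$ to $\mathbb P(\bigwedge^{d}\mathfrak g)$. By the first property $\Phi$ is $U$-invariant, and by the second it is $\Ad(F)$-equivariant for the projective $\Ad$-action; hence $\Phi_*\mu$ is an $\Ad(F)$-invariant Borel probability measure on $\mathbb P(\bigwedge^{d}\mathfrak g)$. Now $\Ad(f_t)$ acts $\R$-diagonalizably on $\bigwedge^{d}\mathfrak g$, with eigenvalues of the form $e^{\lambda t}$, and a short Poincar\'e-recurrence argument shows that any $\Ad(F)$-invariant probability measure on this projective space is concentrated on the union of the projectivized eigenspaces: a point with two or more nonzero eigencomponents converges to two distinct limit points as $t\to\pm\infty$, hence is not recurrent. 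Therefore $\Phi(x)$ is an $\Ad(f_t)$-eigenline for $\mu$-a.e.\ $x$, and by injectivity of the Plücker embedding $\Ad(f_t)\mathfrak h_x=\mathfrak h_x$, i.e.\ $f_tH_xf_t^{-1}=H_x$. Combined with $H_{f_tx}=f_tH_xf_t^{-1}$, the maps $x\mapsto H_x$ and $\Phi$ are now $FU$-invariant mod $\mu$, so by $FU$-ergodicity they are a.e.\ constant; this yields a single subgroup $H$ with $H_x=H$ for $\mu$-a.e.\ $x$, normalized by $F$, and generated by one-parameter unipotent subgroups and containing $U$ by Ratner's theorem. This proves (i) and half of (ii).

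For the remaining assertion of (ii) I would show $\chi(f_t):=|\det(\Ad(f_t)|_{\mathfrak h})|$ equals $1$. This is a continuous homomorphism $\R\to\R_{>0}$, hence $\chi(f_t)=e^{\lambda t}$ for some $\lambda\in\R$; assume $\lambda\neq0$. Let $V(x)$ denote the covolume of $Hx$ with respect to a fixed Haar measure on $H$, which is finite for $\mu$-a.e.\ $x$ since $m_x$ is a probability measure. Because $f_t$ normalizes $H$ and conjugation by $f_t$ carries the stabilizer lattice of $Hx$ onto that of $Hf_tx$, it scales the covolume by $\chi(f_t)^{\pm1}$, so $V(f_tx)\to\infty$ either as $t\to+\infty$ or as $t\to-\infty$. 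But $\mu$ is $f_t$-invariant and $V<\infty$ $\mu$-a.e., so Poincaré recurrence (in the form \eqref{eq: Poincare}) forces $V(f_tx)$ to return to a bounded set along a sequence tending to that same infinity — a contradiction. Hence $\lambda=0$, completing (ii).

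I expect the second paragraph to be the main obstacle: converting the conjugation-equivariance $H_{f_tx}=f_tH_xf_t^{-1}$ into honest $F$-invariance of $x\mapsto H_x$. This is exactly where the diagonalizability of $F$ enters essentially (a unipotent subgroup could not play this role), via the rigidity of invariant probability measures of an $\R$-split torus acting on projective space. Everything else is bookkeeping around Ratner's theorem, $FU$-ergodicity, and Poincaré recurrence.
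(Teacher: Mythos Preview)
The paper does not supply its own proof of this theorem; it is quoted as a result of Mozes \cite{m95} (see also \cite{mt96}) and used as a black box in the proof of Theorem \ref{thm;measure}. So there is no proof in the paper to compare your argument against.

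That said, your argument is essentially correct and is in the spirit of the cited references. The key steps --- the equivariance $H_{f_tx}=f_tH_xf_t^{-1}$ from uniqueness of the ergodic decomposition, constancy of $\dim H_x$ by $FU$-ergodicity, passage to the Pl\"ucker embedding, and the recurrence argument showing that an $\Ad(F)$-invariant probability measure on $\mathbb{P}(\bigwedge^d\mathfrak{g})$ is supported on the union of projectivized eigenspaces --- are all sound. One minor imprecision: in the last paragraph your appeal to \eqref{eq: Poincare} is slightly loose, since that is a topological statement about $y\in X$ while the covolume $V$ is a priori only measurable, so $f_{t_n}y\to y$ does not immediately give $V(f_{t_n}y)\to V(y)$. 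The fix is easy: since $V(f_tx)=e^{\lambda t}V(x)$ holds exactly, either apply Poincar\'e recurrence to the sets $\{V\le M\}$ (any point of such a set with $V>0$ leaves it permanently if $\lambda\neq0$), or observe that $(\log V)_*\mu$ would be a translation-invariant probability measure on $\R$, which is impossible.
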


Let $\{h_t: t\in \mathbb R\}$ be a $1$-parameter  subgroup of $G$. 
We say that $\{h_tx : t \geq 0\}$
(respectively $\{h_tx: t \leq 0\}$) is {\em divergent} if for any
compact $K \subset X$ there is $t_0$ such that for all $t>t_0$ (resp.,
all $t<t_0$), $h_tx\notin K$. We will need the following well-known
fact: 
\begin{prop}\label{prop: bounded denominators}
If $\rho: G \to \GL(V)$ is a representation defined over $\Q$, and $v
\in V(\Q)$ such that $\rho(h_tg)v \to_{t \to + \infty} 0$, then $\{h_t
\pi(g): t\geq 0\}$ is divergent. The analogous statement replacing
$+\infty$ with $-\infty$ and $t\geq0$ with $t\leq 0$ also holds. 
\end{prop}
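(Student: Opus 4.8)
\emph{Proof proposal.} The plan is to argue by contradiction, using the fact that the $\Gamma$-orbit of a nonzero rational vector stays boundedly away from $0$. Suppose $\{h_t\pi(g):t\ge 0\}$ is not divergent; then there are a compact set $K\subset X$ and a sequence $t_n\to+\infty$ with $h_{t_n}\pi(g)\in K$ for all $n$.

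First I would record the orbit lower bound. Since $v\in V(\Q)$, fix a $\Q$-basis of $V$, identify $V(\Z)$ with $\Z^{d}$ ($d=\dim V$) in this basis, and pick $N\in\N$ with $Nv\in V(\Z)$. Because $\rho$ is defined over $\Q$, the matrix entries of $\rho(\cdot)$ are polynomials with rational coefficients in the entries of the argument; since every $\gamma\in\Gamma=\SL_3(\Z)$ has integer entries, there is a fixed $D\in\N$, a common denominator of these coefficients, with $D\rho(\gamma)\in M_d(\Z)$ for all $\gamma\in\Gamma$. Hence $DN\,\rho(\gamma)v=D\rho(\gamma)(Nv)$ is a nonzero integer vector (nonzero because $\rho(\gamma)\in\GL(V)$ and $v\ne 0$), so $\|\rho(\gamma)v\|\ge \frac{1}{DN}=:c_1>0$ for every $\gamma\in\Gamma$. (Equivalently: $\{\rho(\gamma)v:\gamma\in\Gamma\}$ is a discrete subset of $V$ that does not accumulate at $0$.)

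Next I would transfer this bound along the orbit by a soft compactness argument. Choose a compact lift $\widetilde K\subset G$ of $K$ and write $h_{t_n}g=k_n\gamma_n$ with $k_n\in\widetilde K$ and $\gamma_n\in\Gamma$. Then $\rho(h_{t_n}g)v=\rho(k_n)\rho(\gamma_n)v$, and since $\rho(\widetilde K)$ is a compact subset of $\GL(V)$ there is $M<\infty$ with $\|\rho(k)^{-1}\|\le M$ in operator norm for all $k\in\widetilde K$. Therefore $\|\rho(h_{t_n}g)v\|\ge M^{-1}\|\rho(\gamma_n)v\|\ge c_1/M>0$ for all $n$, contradicting $\rho(h_tg)v\to 0$ as $t\to+\infty$. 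The statement with $+\infty$ replaced by $-\infty$ and $t\ge 0$ by $t\le 0$ follows verbatim, taking instead a sequence $t_n\to-\infty$. The only step needing any care is the uniform bounded-denominator estimate in the second paragraph, which rests on $\rho$ being polynomial and defined over $\Q$ and on $\Gamma$ consisting of integer matrices; the rest is routine.
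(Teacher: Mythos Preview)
Your argument is correct and is precisely the ``standard bounded denominators argument'' that the paper invokes by citation (to \cite[Prop.~3.1]{Cassels}) rather than writing out. The key points---that $\rho(\Gamma)v$ lies in a fixed lattice (up to clearing denominators) and hence is uniformly bounded away from $0$, combined with a compact lift of a recurrence set---match the intended proof exactly.
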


\begin{proof}
This follows from a standard bounded denominators argument, see
e.g. \cite[Prop. 3.1]{Cassels}. 
\end{proof}

We let $E_{ij}$ be the matrix whose matrix coefficient in the $i$th
row and $j$th column is 1, and 0 elsewhere. 
Set
\begin{equation}\label{eq;u}
U_{ij}:=\{\exp (sE_{ij}) : s\in \mathbb R\}.
\end{equation}
Let $U^+ := \langle U_{12}, U_{13}, U_{23}\rangle$ be the upper
triangular unipotent group. 
We will need the following:

\begin{prop}\label{prop: upper triangular}
Let $x \in X$ such that $U^+x$ is closed. Then for any  1-parameter
subgroup $\{h_t\}$ of the diagonal group, at least one
of the two trajectories $\{h_tx: t \geq 0\}, \, \{h_tx: t \leq 0\}$ is
divergent. 
\end{prop}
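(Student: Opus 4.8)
The plan is to exploit the structure of the action of the diagonal group $A$ on the flag of $\Q$-rational subspaces determined by the lattice $x$. Since $U^+x$ is closed, the orbit $U^+x$ supports a finite $U^+$-invariant measure, and by a standard argument (e.g. as in the proof of Mahler's criterion combined with the description of closed $U^+$-orbits) this forces $x$ to be, up to the $U^+$-action, a lattice adapted to the standard full flag $\{0\}\subset \langle \EE_1\rangle \subset \langle \EE_1,\EE_2\rangle \subset \R^3$; more precisely, there is $g$ in the upper triangular Borel subgroup $B$ with $x=\pi(g)$, so that the three lines $\langle g\EE_1\rangle$, $\langle g\EE_1, g\EE_2\rangle$ and $\R^3$ give a complete flag of rational subspaces for the lattice $g\Z^3$. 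I would first record this normalization carefully, reducing to the case $x=\pi(g)$, $g\in B$.

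Next I would set up the relevant rational representations. For a diagonal one-parameter subgroup $\{h_t\}=\{\diag(e^{a_1t},e^{a_2t},e^{a_3t})\}$ with $a_1+a_2+a_3=0$, consider the standard representation $\rho_1=\mathrm{id}$ on $V_1=\R^3$ with the rational vector $g\EE_1 \in g\Z^3$ (a primitive integer vector after clearing denominators; here I use that $\langle g\EE_1\rangle$ is a rational line), and the representation $\rho_2=\wedge^2$ on $V_2=\wedge^2\R^3$ with the rational vector $g\EE_1\wedge g\EE_2$ spanning the rational plane. Because $g$ is upper triangular, $h_tg\EE_1 = e^{a_1t}(\text{upper-triangular action on }\EE_1)$ lies in the span of $\EE_1$ times $e^{a_1 t}$ — actually $h_t g \EE_1$ has the form $e^{a_1 t}$ times a fixed vector only up to the unipotent part, so more carefully: $g\EE_1 = c\EE_1$ for some $c\neq 0$ since $g$ is upper triangular, hence $h_tg\EE_1 = ce^{a_1t}\EE_1$; similarly $g\EE_1\wedge g\EE_2 \in \langle \EE_1\wedge\EE_2\rangle$, so $h_t(g\EE_1\wedge g\EE_2) = c'e^{(a_1+a_2)t}\,\EE_1\wedge\EE_2 = c'e^{-a_3t}\,\EE_1\wedge\EE_2$. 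Thus the sizes of these two rational vectors along the $h_t$-orbit are governed by the exponents $a_1$ and $-a_3$ respectively.

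Now I would run the dichotomy. The exponents satisfy $a_1 + a_2 + a_3 = 0$; if the one-parameter subgroup is nontrivial, not all $a_i$ vanish. If $a_1 > 0$, then $\rho_1(h_tg)(g\EE_1)\to 0$ as $t\to -\infty$ after applying $\rho_1$ to the dual/contragredient appropriately — more directly, $ce^{a_1t}\EE_1 \to 0$ as $t\to-\infty$, so by Proposition \ref{prop: bounded denominators} (applied with the representation $\rho_1$, the rational vector, and $-\infty$) the trajectory $\{h_tx:t\le 0\}$ is divergent. If $a_1 < 0$, the same vector goes to $0$ as $t\to+\infty$, giving divergence of $\{h_tx:t\ge 0\}$. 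If $a_1 = 0$, then since the subgroup is nontrivial we must have $a_3 \neq 0$ (as $a_2=-a_3$), and we use the $\wedge^2$-vector instead: $c'e^{-a_3t}\EE_1\wedge\EE_2\to 0$ as $t\to+\infty$ if $a_3>0$ and as $t\to-\infty$ if $a_3<0$, and Proposition \ref{prop: bounded denominators} again yields divergence of one of the two half-trajectories. This exhausts all cases.

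The main obstacle I anticipate is the first step: justifying cleanly that a closed $U^+$-orbit forces $x=\pi(g)$ with $g$ upper triangular, i.e. that the associated flag is rational. One must invoke the correspondence between closed orbits of horospherical (or more generally full unipotent Borel) subgroups and rational flags — for $U^+$ the full upper-triangular unipotent, $U^+x$ closed iff the complete flag stabilized by $U^+$, namely $\langle \EE_1\rangle\subset\langle \EE_1,\EE_2\rangle$, consists of subspaces that are rational for the lattice $x$. I would cite the standard fact (Borel–Harish-Chandra / Dani's work on orbit closures, or Mahler-criterion style reasoning: if the line $\langle g\EE_1\rangle$ or the plane $\langle g\EE_1,g\EE_2\rangle$ were irrational, one could produce a sequence in $U^+$ pushing $x$ to infinity, contradicting properness of the orbit map). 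Once this normalization is in hand, the rest is the short representation-theoretic argument above.
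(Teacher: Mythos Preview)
Your proposal is correct and follows essentially the same approach as the paper: both use the eigenvectors $\EE_1$ and $\EE_1\wedge\EE_2$ for the Borel, observe that at least one of the two associated characters of the diagonal group is nontrivial on $\{h_t\}$, and then invoke Proposition~\ref{prop: bounded denominators}. The only difference is in the reduction for general $x$. The paper writes $g=ng_0$ with $n\in N_G(U^+)=B$ and $g_0\in\SL_3(\Q)$ (using that minimal $\Q$-parabolics are $\Q$-conjugate), and applies Proposition~\ref{prop: bounded denominators} to the rational vector $g_0^{-1}\EE_1$; you go one step further and normalize so that $g\in B$ outright, which amounts to the (true, elementary) fact that $\SL_3(\Z)$ acts transitively on complete rational flags in $\Q^3$, and then apply the proposition to $\EE_1$ itself. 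Your reduction is slightly stronger and needs this extra integral statement, which you correctly flag as the point requiring justification; once that is granted the two arguments are identical.
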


\begin{proof}
First suppose that $x$ is the point corresponding to the identity
coset $\Gamma$, that is $x = \pi(e)$ where $e$ is the identity element
of $G$. There is a natural action of $G$ on $\R^3$ by linear
transformations and a corresponding induced action on the second
exterior power $\bigwedge^2 \R^3$. Let $\EE_1, \EE_2, \EE_3$ be the
standard basis of $\R^3$ and let $\bv_{12}
:= \EE_1 \wedge \EE_2\in \bigwedge^2 \R^3$. The vectors $\EE_1,
\bv_{12}$ 
%are $U^+$-invariant and 
are eigenvectors for the diagonal group, and we
let $\chi_1, \chi_2$ be the corresponding characters. That is, if $a =
\diag(e^s, e^t, e^{-(s+t)})$, then: 
$$a \EE_1 =\chi_1(a) \EE_1, \ \text{ where } \chi_1(a) = e^{s}$$ and 
$$a \bv_{12} = \chi_2(a) \bv_{12},\ \text{ where } \chi_2(a) =e^{s+t}.$$
For any one-parameter
diagonal subgroup $\{h_t\}$, at least one of the two restrictions
$\chi_i|_{h_t}, \ i=1,2$ is not trivial. This implies that $h_t \EE_1 \to 0$ or
$h_t \bv_{12} \to 0$ as $t$ tends to either $+\infty$ or
$-\infty$, and we apply Proposition \ref{prop: bounded denominators}. 

Now suppose that $x = \pi(g)$ for some 
$g \in G$. For definiteness, assume that $h_t \EE_1 \to_{t \to
  +\infty} 0$ (if not, replace $\EE_1$ by $\bv_{12}$ or $+\infty$ by $-\infty$). Since closed orbits for unipotent groups are of
finite volume, $g^{-1}U^+g \cap \Gamma$ is a lattice in $U^+$. Therefore the group 
$g^{-1}U^+g$ is defined over $\mathbb Q$. 
So both the normalizers of $U^+$ and $g^{-1}U^+g$ are $\mathbb Q$-parabolic subgroups 
of $G$, and hence are conjugate over $\Q$. This implies that there exists $g_0\in \SL_3(\mathbb Q)$ such that 
\[
g^{-1}U^+g=g_0^{-1} U^+ g_0.
\]
It follows that $ng_0=g$ where $n\in N_G(U^+)$. Note that both $\EE_1$
and $\bv_{12}$ are eigenvectors for the upper triangular group
$N_G(U^+)$, so we write $n\EE_1 = c\EE_1$ for some $c \in \R$. Therefore we have 
$$h_t g g_0^{-1}{\mathbf e_1}= h_t n \EE_1 = ch_t
\EE_1 \to 0.$$
Since $g_0 \in \SL_3(\Q),$ $g_0^{-1} \EE_1$ is a $\Q$-vector. 
 Applying again Proposition \ref{prop: bounded denominators} (with $g_0^{-1}
\EE_1$ instead of $\EE_1$) we see that the trajectory $\{h_t x\}$ is
divergent. 
\end{proof}

Let $H_0 \cong \SL_2(\R)$ denote the subgroup of $G$ generated by
$U_{12}$ and $U_{21}$. We will need a similar fact for $H_0$. 
\begin{prop}\label{prop: copy of sl2}
Let $x \in X$ such that $H_0x$ is closed, and let $\{h_t\}$ be a
one-parameter subgroup of the group of diagonal matrices which is not
contained in $H_0$. Then $\{h_tx: t\geq 0\}$ and $\{h_tx: t \leq 0\}$
are both divergent. 
\end{prop}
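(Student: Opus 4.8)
The plan is to run the same argument as in the proof of Proposition \ref{prop: upper triangular}, now using that $H_0\cong\SL_2(\R)$, embedded in the upper-left $2\times2$ block, acts on $\R^3=\spa(\EE_1,\EE_2)\oplus\R\EE_3$ as $(\mathrm{standard})\oplus(\mathrm{trivial})$. In particular its only invariant subspaces are $0$, $\R\EE_3$, $\spa(\EE_1,\EE_2)$ and $\R^3$, and $H_0$ meets the diagonal group in $\{\diag(\lambda,\lambda^{-1},1):\lambda\in\R^\times\}$. Writing $h_t=\diag(e^{\alpha t},e^{\beta t},e^{\gamma t})$ with $\alpha+\beta+\gamma=0$, the hypothesis $\{h_t\}\not\subseteq H_0$ is thus exactly the condition $\gamma\ne0$. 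Note also the two eigenvalue identities $h_t\EE_3=e^{\gamma t}\EE_3$ in $\R^3$ and $h_t(\EE_1\wedge\EE_2)=e^{(\alpha+\beta)t}\,\EE_1\wedge\EE_2=e^{-\gamma t}\,\EE_1\wedge\EE_2$ in $\bigwedge^2\R^3$; the exponents $\gamma$ and $-\gamma$ have opposite sign precisely because $\gamma\ne0$.

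Write $x=\pi(g)$. The main point is to produce rational data shrunk by $\{h_t\}$. Since $H_0$ is generated by the unipotent subgroups $U_{12},U_{21}$ and $H_0x$ is closed, the orbit carries a finite $H_0$-invariant volume, so $g^{-1}H_0g\cap\Gamma$ is a lattice in $M:=g^{-1}H_0g$; by the Borel density theorem this lattice is Zariski dense, hence $M$ is defined over $\Q$. The unique $M$-invariant line and the unique $M$-invariant plane in $\R^3$ are $g^{-1}\R\EE_3$ and $g^{-1}\spa(\EE_1,\EE_2)$ respectively, and being the unique invariant subspaces of the $\Q$-group $M$ they are defined over $\Q$. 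Thus $g^{-1}\R\EE_3=\R w$ with $w\in\Z^3\setminus\{0\}$, and $g^{-1}\spa(\EE_1,\EE_2)=\spa(w_1,w_2)$ with $w_1,w_2\in\Z^3$; put $\omega:=w_1\wedge w_2\in\bigwedge^2\Z^3\setminus\{0\}$. By construction $gw$ is a nonzero multiple of $\EE_3$, and $(\bigwedge^2g)\omega$ is a nonzero multiple of $\EE_1\wedge\EE_2$.

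Finally one reads off divergence from the two eigenvalues. If $\gamma<0$, then $h_tgw=e^{\gamma t}gw\to0$ as $t\to+\infty$, so Proposition \ref{prop: bounded denominators}, applied to the standard representation and the rational vector $w$, shows $\{h_tx:t\ge0\}$ is divergent; while $(\bigwedge^2h_t)(\bigwedge^2g)\omega=e^{-\gamma t}(\bigwedge^2g)\omega\to0$ as $t\to-\infty$, so the same proposition applied to $\bigwedge^2$ of the standard representation and the rational vector $\omega$ (in its $-\infty$ form) shows $\{h_tx:t\le0\}$ is divergent. If $\gamma>0$ the two time directions simply switch roles, and again both rays are divergent; since $\gamma\ne0$ this is exhaustive. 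I expect the only genuinely delicate step is the one carried out in the second paragraph — deducing, from closedness of $H_0x$ for a general base point $x=\pi(g)$, that the invariant line and plane of $g^{-1}H_0g$ are $\Q$-rational (the analogue of the $\Q$-parabolic conjugacy argument used for Proposition \ref{prop: upper triangular}); everything else is bookkeeping with characters of the torus together with Proposition \ref{prop: bounded denominators}.
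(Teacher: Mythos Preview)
Your proof is correct and follows essentially the same route as the paper: identify the eigenvectors $\EE_3\in\R^3$ and $\EE_1\wedge\EE_2\in\bigwedge^2\R^3$, show that the $g^{-1}$-translates of the corresponding line and plane are $\Q$-rational because $g^{-1}H_0g$ is a $\Q$-group with a unique invariant line and plane, and then apply Proposition~\ref{prop: bounded denominators} in each time direction. The only cosmetic difference is that you invoke Borel density plus uniqueness to obtain $\Q$-rationality of the invariant subspaces, whereas the paper phrases the same descent argument via field automorphisms of $\C$; these are equivalent standard justifications of the same step.
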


\begin{proof}
First suppose that $x = \pi(e)$ and consider
the vector $ \bv_{12} = \EE_1 \wedge \EE_2 \in \bigwedge^2 \R^3$ of
the previous proof, along with the vector $\EE_3$. For any 1-parameter
group $\{h_t\}$ not contained in $H_0$, possibly after switching the
roles of $+\infty$ and $-\infty$, we have $h_t \EE_3 \to_{t \to
  +\infty} 0$ and $h_t \bv_{12} \to_{t \to -\infty} 0.$ Therefore the
claim follows from Proposition \ref{prop: bounded denominators}. 

Now assume that $x = \pi(g)$ for some $g \in G$. The group $H_0$ is
the stabilizer of the vector $\bw:= \bv_{12} \oplus \EE_3$ in the
representation $W:=\bigwedge^2 \R^3 \oplus \R^3$. Moreover $\bw$ 
represents the unique splitting of $\R^3$ into a direct sum
decomposition of a 2-dimensional and 1-dimensional space which is
left invariant by $H_0$. Consider the group $H' := g^{-1} H_0 g$ and
the vector $\bw':=g^{-1} \bw \in W$. Then $\bw'$ represents the unique
splitting into a direct sum decomposition as above, which is $H'$
invariant. Also, since $Hx$ is closed, it is of finite volume and $H'
\cap \Gamma$ is a lattice in $H'$. This implies that $H'$ is defined
over $\Q$. 

Now let $\iota: \C \to \C$ be any field automorphism. The
map $\iota$ acts on $G$ (by its action on matrix entries) and on $W$
(by its action on vector coefficients) in a compatible way, and
$\iota(H')=H'$ since $H'$ is defined over $\Q$. This implies that 
$\iota(\bw')$ also represents the unique splitting
$\iota(H')$-invariant decomposition of $W$ into a 1- and 2-dimensional
subspace. Since the dimensions of these two subspaces are different,
$\iota$ also preserves each subspace in this splitting, that is,
$\iota$ preserves $\bv_{12}':= g^{-1} \bv_{12}$ and $\EE':= g^{-1} \EE_3$. Since this is
true for any field automorphism $\iota$, 
$\bv'_{12}$ and $\EE'$ are
$\Q$-vectors in $\R^3$ and $\bigwedge^2 \R^3$ respectively, and 
$$h_t
g \EE' = h_t \EE_3 \to_{t \to +\infty} 0, \ h_t g \bv'_{12} = h_t \bv_{12}
\to_{t \to -\infty} 0.$$
Thus the claim follows using Proposition \ref{prop: bounded
  denominators} with
$\bv'_{12}$ and $\EE'$. 
\end{proof}

\begin{proof}[Proof of Theorem \ref{thm;measure}] Let $F$ and $U$ be
  as in the statement of the theorem, and for an $FU$-invariant
  ergodic measure $\mu$, let $H$ be as in Theorem \ref{thm;mozes}. 
   We will prove Theorem \ref{thm;measure} by showing $H=G$, and to
   this end we will assume by contradiction that $H \neq G$,  consider
   various possibilities for the triple 
$(F, U, H)$, and derive a contradiction in each case. 
%  We note that for any $g\in G$ 
%  there is a conjugation of dynamical systems
%\[
%(X, FU) \xrightarrow{g} (X, gFU g^{-1}).
%\]
%Therefore it suffices to consider a representative  of the triple $(F, U, H)$ in 
%every conjugacy class.
%Let $\tau: G\to G$ be the transpose  inverse automorphism, i.e.~$\psi(g)=(g^{-1})^{\mathrm {tr}}$. 
%Then $\tau$ induces an homeomorphism of the space $X$
%which  we continue to denote by $\tau$. The measure $\tau \mu$ is $(FU)^{\mathrm {tr}}$ invariant 
%and ergodic. Therefore the validity of Theorem \ref{thm;measure} for the pair $(F, U)$  implies that 
%for the pair $(F^{\mathrm {tr}}, U^{\mathrm {tr}})$.

%The following result   is  a special case of  the measure rigidity theorem  of Einsiedler, Katok and Lindenstrauss
% \cite[Theorem 1.3]{ekl06}.
% \begin{thm}[\cite{ekl06}]\label{thm;ekl}
% Let $\mu$ be a probability measure on $X$.
% Let $A$ be a connected two dimensional diagonalizable subgroup of $G$. 
%  Suppose that $\mu$ is invariant under $A$ and some nontrivial $A$ normalized  unipotent  subgroup of $G$, then $\mu=m$. 
% \end{thm}

Let $\mathfrak{h}, \mathfrak{u}$ denote respectively the Lie algebras
of $H$ and $U$. 
The key observation is the following. 
Since conjugation by $f_1$ preserves the volume of $H$ and
$\mathfrak{u} \subset \mathfrak{h}$, and since conjugation by $f_1$ does not preserve the
volume of $U$, the adjoint action of $f_1$ on $\mathfrak{u}$ is
nontrivial and hence $\mathfrak{h}$ must contain eigenvectors of
$\mathrm{Ad}(f_1)$ with both positive and negative eigenvalues.

The group of automorphisms of $G$ is generated by inner automorphisms
(conjugation) and the automorphism $g \mapsto (g^{-1})^{\mathrm{tr}}.$
With no loss of generality we can apply an automorphism of $G$ and a
reparametrization of $F$ to the
triple $(F, U, H)$ to assume:
\begin{enumerate}
\item
$f_t = \diag(e^{t}, e^{at}, e^{bt})$ where $1 \geq a >0> b, a+b = -1$
(since such one-parameter subgroups fill up a fundamental domain for
the action of the automorphism group of $G$ on the diagonal group, and
since $f_t$ does not preserve a vector in $\R^3$). 
\item
$U$ is contained in the upper triangular group $U^+ $
(since, by reparameterizing $f_t$, we may assume it acts on $U$
by expansion). 
\item
The subgroup $H \cap U^-,$ where $U^-$ is the lower triangular
unipotent subgroup $\langle
U_{21}, U_{31}, U_{32} \rangle,$ contains a nontrivial group $N$
(whose Lie algebra is denoted by $\mathfrak n$) such
that $F$ normalizes $N$ and acts
on its Lie algebra by a strict contraction (since the action of $F$ on $H$ preserves Haar measure on $H$
so there must be a subgroup which is contracted).  
\end{enumerate} 

Suppose first that $a=1$, so that $b=-2$. 
%That is $\{f_t\}$ is the
%group given by \eqref{eq;notation}. 
In this case
the centralizer $Z$ of $F$ is a copy of $\GL_2(\R)$ embedded as 
$$
Z = \left(\begin{matrix}* & * & 0 \\
* & * & 0 \\
0 & 0 & *\end{matrix}  \right), 
$$
and we can further simplify our problem by conjugating by elements of
$Z$. %Note that $N$ acts
%transitively on $\spa (\EE_1, \EE_2) \sm \{0\}$ and preserves the
%decomposition $\R^3 = \spa (\EE_1, \EE_2) \oplus \spa (\EE_3)$. 
We decompose $\mathfrak{g}$ into eigenspaces for $\Ad(f_1)$, writing
$\mathfrak{g} = V^+ \oplus V^- \oplus V^0$, where 
$$
V^+ : = \spa(E_{13}, E_{23}), \ V^-:= \spa(E_{31}, E_{32}), \ V^0 :=
\mathfrak{z}
$$
(where $\mathfrak{z}$ is the Lie algebra of $Z$, and this is the
decomposition into eigenspaces of $\Ad(f_1)$ with eigenvalues $e^3,
e^{-3}, 1$ respectively). 
Since conjugation by $F$ preserves Haar
measure on $H$, if $\mathfrak{h}$ contains $V^+$ it also contains
$V^-$. Since $V^+$
and $V^-$ generate $\mathfrak{g}$ as a Lie algebra, this is
impossible, so 
\begin{equation}\label{eq: first restriction}
\mathfrak{h} \cap V^+ = \mathfrak{u}, \  \mathfrak{h} \cap
V^- = \mathfrak{n}.
\end{equation}
A direct computation in the
adjoint representation $\Ad: G \to \GL(\mathfrak{g})$ shows that $Z$ acts transitively on nonzero elements of
$V^+$ and also acts transitively on nonzero elements
of $V^-$. Moreover when acting on $\mathfrak{g}
\oplus \mathfrak{g}$ via $\mathrm{Ad} \oplus \mathrm{Ad}$, there is an
element of $Z$ which maps $\mathfrak{u}$ to $\spa(E_{13})$ and maps
$\mathfrak{n}$ to either $\spa(E_{31})$ or $\spa(E_{32})$. With no
loss of generality we apply such a conjugation, and treat first the
case that 
\begin{equation}\label{eq: first case}
\mathfrak{u} = \spa(E_{13}), \ 
\mathfrak{n} = \spa(E_{32}).
\end{equation}
%The action of
%$\Ad(f_1)$ on $V^+$ (respectively on $V^-$) is via the eigenvalue
%$e^3$ (resp. $e^{-3}$). 

Then $H$ contains the group $U_0$ generated by $U_{13}, U_{32}$, which
is 3-dimensional with Lie algebra $\mathfrak{u}_0 : = \spa( E_{13},
E_{32}, E_{12})$. There is no proper 
Lie subalgebra of $\mathfrak{g}$ which is $\Ad(f_1)$-invariant,
satisfies \eqref{eq: first restriction}, and
properly contains $\mathfrak{u}_0$. This implies that $H =U_0.$ But
$U_0$ is a conjugate of $U^+$, by a conjugation which leaves $F$
inside the group of diagonal matrices. By applying such a conjugation we obtain a
contradiction to Proposition \ref{prop: upper triangular} and \eqref{eq: Poincare}. 
%properly contains $U_0$ then 
%By applying a
%further conjugation
%\[
%f_t=
%\left(
%\begin{array}{ccc}
%e^t & 0 & 0 \\
%0 & e^{-2t} & 0 \\
%0 & 0 & e^{t}
%\end{array}
%\right), 
%\quad 
%U =U_{12},\quad  U_{23}\le H .
%\]
%Since the group $U_0$ generated by $U_{12}$ and $U_{23}$ is the upper triangular unipotent subgroup,
%Theorem \ref{thm;ekl} implies that 
% the measure $\mu $  is equal to $m$ if $H\neq U_0$.
% 
% We will show that it is not possible to have $H= U_0$. 
% Assume  the contrary and s
%Suppose that $H=H_{x_0}$ where $x_0=g_0\Gamma$ for some 
%$g_0\in G$. Since $g_0^{-1}Hg_0\cap \Gamma$ is a lattice in $H$, the group 
%$g_0^{-1}Hg_0$ is defined over $\mathbb Q$. 
%So both the normalizers of $H$ and $g_0^{-1}Hg_0$ are $\mathbb Q$-parabolic subgroups 
%of $G$. Therefore there exists $g\in \SL_3(\mathbb Q)$ such that 
%\[
%g_0^{-1}Hg_0=g^{-1} H g.
%\]
%It follows that $g_0=ng $ where $n\in N_G(H)$.  Therefore for any $h\in H$ we have 
%$f_t hg_0 g^{-1}{\mathbf e_1}\to 0$
%as $t\to -\infty$, where $\EE_1$ is the first element of the standard
%basis of $\R^3$. This shows that $\{f_t y : t<0\}$ is a divergent
%trajectory in $X$ for any $y \in Hx$, and this contradicts \eqref{eq: Poincare}. 
% Since $g^{-1}{\mathbf e_1}\in \mathbb Q^3$
%we have  $f_t m_x\to 0$ in the space of finite measures on $X$. 
%In view of the  choice of $x_0$
%this contradicts 
%property (\rmnum{3})  of Theorem \ref{thm;mozes}.
%{\color{blue} Try to explain this without using the conjugacy of $\mathbb Q$-parabolics.}

We now continue with the assumption $a=1$ and assume that \eqref{eq:
  first case} does not hold, so that (after conjugating by an element
of $Z$)
\begin{equation}\label{eq: second case}
\mathfrak{u} = \spa(E_{13}), \ \mathfrak{n} = \spa(E_{31}).
\end{equation}
% \[
%f_t=
%\left(
%\begin{array}{ccc}
%e^t & 0 & 0 \\
%0 & e^t & 0 \\
%0 & 0 & e^{-2t}
%\end{array}
%\right),
%\quad 
%U =U_{13},\quad \exp(E_{31}+sE_{32})\in H.
%\]
%for some $s\in \mathbb R$. 
Then $H$ contains the group $H_0 \cong \SL_2(\R)$ whose Lie algebra is
generated by $\mathfrak{u}$ and $\mathfrak{n}$, and $F \not \subset H_0$. By Proposition
\ref{prop: copy of sl2}
and \eqref{eq: Poincare} we cannot have $H=H_0$. So $H_0 \varsubsetneq
H$ and since the group generated by $F$ and $H_0$ contains the full
diagonal group, $H$ is invariant under conjugation by all elements of
the diagonal group. Therefore $H$ must contain at least one other eigenspace
$U_{ij}$ not contained in $H_0$. By \eqref{eq: first restriction}, $H$
contains one of $U_{12}, U_{21}$. However $H_0$ and any one of these
two groups generate a group which contains one of $U_{23}, U_{32}$ and
\eqref{eq: first restriction} cannot hold. 

Finally suppose $a<1$ so that the three eigenvalues of $f_1$ are
distinct. 
%
% {\bf Case 3}. 
% \[
%f_t=
%\left(
%\begin{array}{ccc}
%e^{at} & 0 & 0 \\
%0 & e^{bt} & 0 \\
%0 & 0 & e^{-t}
%\end{array}
%\right)
%\]
%where $a>b, a+b=1$ and $a,b\neq 0$.
In this case $E_{12}, E_{13}$ and $E_{23}$
belong to different eigenspaces of $\mathrm {Ad}(f_1)$, with
corresponding eigenvalues $e^{1-a}, e^{1-b}, e^{a-b}$. The equations
$a+b=-1, 0<a <1$ imply that these eigenvalues are distinct:
$$e^{1-b} > e^{a-b} > e^{1-a}.$$
Moreover the product of the eigenvalues that correspond to
eigenspaces belonging to $\mathfrak{h}$ is 1, since conjugation by elements of $F$ preserves the 
Haar measure on $H$. We consider the possibilities for $H$. 
The smallest possible value of $\dim H$ is when $H$ is generated by a pair
$U_{ij}, U_{ji}$. That is, up to a conjugation by a matrix preserving
the diagonal group, $H$ coincides with the group $H_0$ considered
above. But this leads to a contradiction via
\eqref{eq: Poincare} and Proposition \ref{prop: copy of sl2}.  

If $\dim H \geq 4$ then $H$ contains at least two expanding or two
contracting eigenvalues. 
It is easy to check that (up to re-indexing) $H$ contains
$U_{13}, U_{21}, U_{32}$, and these groups generate $G$, which is impossible. 
\end{proof}

\section{Equidistribution of a line segment}
The aim of this section is to prove Theorems \ref{thm;equiline}, \ref{thm;dirichlet}
and \ref{thm: weights} (\rmnum{1}). 
We first assume the notation and assumptions in Theorem \ref{thm;equiline},
in particular $f_t^{(\br)}$ and $u$ %and $\bar{u}$ 
are as in (\ref{eq;notation}),
and $\bar{\nu}$ is the image of $\nu$ under $s \mapsto {u}(s, as+b)x_0$. That is
\begin{equation}\label{eq;push}
\int _X\psi \, d\bar{\nu}=\int_{\R}\psi({u}(s, as+b)x_0)\, d\nu(s)
\end{equation}
for every $\psi\in C_c(X)$. 
Sometimes we need to treat the cases where $r_1=r_2$ and 
$r_1\neq r_2$ separately, so we let $f_t:=f_t^{(1/2,1/2)}$
to emphasize that we are in the the former case. 
First we show that there is no escape of
mass. 

\begin{lem}\label{lem;nonescape}
Let $\mu$ be a weak-* limit of 
\begin{equation}\label{eq;limit}
\lim_{T\to \infty}\frac{1}{T}\int_0^{T} \left({f_{t}^{(\br)}} \right)_* \bar{\nu} \, dt\quad\mbox{ as } T\to\infty.
\end{equation}
Then $\mu(X)=1$. 
\end{lem}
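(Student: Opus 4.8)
The plan is to establish non-escape of mass by exhibiting a proper function (a height function) on $X$ which contracts on average under the flow $f_t^{(\br)}$, uniformly over the starting points $u(s,as+b)x_0$ with $s \in I$. The natural candidate is built from the function $\alpha(\Lambda) = \max\{\|v\|^{-1}: v \in \Lambda \text{ primitive}\}$ (or a regularized version thereof, e.g. a sum over the lattice of subgroups à la Eskin--Margulis), which blows up exactly on the divergent part of $X$; Dani--Margulis-type arguments then give a bound of the form $\int_0^{T} \alpha(f_t^{(\br)} y)\,dt \le c T \, \alpha(y)^{?} + c' T$ for appropriate constants. The key point that makes this work is that the hypothesis \eqref{eq: weird condition} supplies a compact set $K$ and, for every $t \geq 0$, a point $s_t \in J$ with $f_t^{(\br)} u(s_t, as_t+b)x_0 \in K$; so there is always at least one point in the translated segment lying in a fixed compact set, which anchors the average.

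First I would reduce to a statement about the family of measures $\bar\nu$ and observe that, since $\nu \ll \mathrm{Leb}$ on the bounded interval $I$, it suffices to prove the non-escape estimate with $\bar\nu$ replaced by normalized Lebesgue measure on a slightly larger interval; absolute continuity lets us dominate $\nu$ by a constant multiple of Lebesgue on $I$. Next I would invoke the quantitative non-divergence machinery of Kleinbock--Margulis for unipotent-like one-parameter families: the curve $s \mapsto u(s, as+b)$ is a nondegenerate (indeed affine, hence polynomial) curve in the expanding horospherical directions for $f_t^{(\br)}$, so the $(C,\alpha)$-good property holds with uniform constants, and one gets that for any $\varepsilon > 0$ there is a compact $K_\varepsilon \subset X$ with
\[
\frac{1}{|I|}\int_I \mathbbm{1}_{X \setminus K_\varepsilon}\big(f_t^{(\br)} u(s, as+b)x_0\big)\, ds < \varepsilon
\]
for all sufficiently large $t$ — provided the relevant lattice vectors do not decay too fast along the whole segment, which is where one uses that the segment meets $K$ at time $t$. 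Integrating in $t$ from $0$ to $T$, dividing by $T$, and passing to the weak-$*$ limit gives $\mu(X \setminus K_\varepsilon) \le \varepsilon$, hence $\mu(X) \ge 1 - \varepsilon$ for all $\varepsilon$, i.e. $\mu(X) = 1$.

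The main obstacle, and the step requiring genuine care, is translating the single-point recurrence condition \eqref{eq: weird condition} into a uniform-over-$s$ lower bound on lattice vectors of the form needed for Kleinbock--Margulis non-divergence. Concretely, one must rule out that, for some large $t$, a positive-measure set of $s \in I$ has $f_t^{(\br)} u(s,as+b)x_0$ deep in the cusp simultaneously; the point $s_t \in J$ with $f_t^{(\br)} u(s_t,as_t+b)x_0 \in K$ forces a lower bound $\|f_t^{(\br)} u(s_t,as_t+b)x_0 \cdot w\| \geq \delta_K > 0$ for every nonzero $w$ in the underlying lattice $x_0$ (and in $\bigwedge^2$), and since the dependence on $s$ of each coordinate of $f_t^{(\br)} u(s,as+b)x_0 \cdot w$ is polynomial of bounded degree with the leading behaviour controlled by the expansion rates $e^{r_i t}$, the $(C,\alpha)$-good property propagates this lower bound from the single good parameter $s_t$ to control the measure of bad parameters on all of $I \cup J$. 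I would handle the cases $r_1 = r_2$ and $r_1 \neq r_2$ together here, since the argument only uses that at least one coordinate is expanded; the degenerate linear structure of the curve $s \mapsto (s, as+b)$ is harmless because a nonconstant affine function is $(C,\alpha)$-good with absolute constants.
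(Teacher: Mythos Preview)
Your proposal is correct and follows essentially the same strategy as the paper: reduce from $\nu$ to Lebesgue measure via absolute continuity, then apply quantitative non-divergence anchored by the good parameter $s_t$ supplied by hypothesis \eqref{eq: weird condition}. The paper shortcuts your last paragraph by the matrix identity
\[
f_t^{(\br)} u(s, as+b)x_0 = u\bigl(e^{(r_1+1)t}(s-s_t),\, ae^{(r_2+1)t}(s-s_t)\bigr)\, f_t^{(\br)} u(s_t, as_t+b)x_0,
\]
which recasts the pushed segment as an honest one-parameter unipotent orbit based at a point of the fixed compact set $K$, so that the Dani--Margulis non-divergence estimate \cite[Theorem~6.1]{dm93} applies directly and there is no need to verify $(C,\alpha)$-goodness or track representations by hand.
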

\begin{proof} 
It suffices to show that for each $\vre>0$ there is a compact $K_0
\subset X$ such that for all large enough $t$, 
\begin{equation}\label{eq: for no escape}
\nu (\{s \in I: f_t^{(\br)} {u}(s, as+b)x_0 \notin K_0\}) < \vre.
\end{equation}
Since $\nu$ is absolutely continuous with respect to Lebesgue measure
on $\R$, we can write $d\nu(s) = h(s)ds$ where $h$ is a
non-negative measurable function on $I$ with $\int_I h(s)ds=1$. 
Given $\vre>0$, let $R$ be large enough so that 
$$
\int_{I_R} h(s)\, ds < \frac{\vre}{2}, \ \text{ where } I_R := \{s \in I: h(s) \geq R\}.
$$
Then in order to establish \eqref{eq: for no escape}, by considering
separately $I_R$ and $I \sm I_R$, it suffices to
find a compact $K_0 \subset X$ such that for all sufficiently large
$t$, 
\begin{equation}\label{eq: what we need}
\frac{|\{s \in I: f_t^{(\br)} {u}(s, as+b)x_0 \notin K_0  \}|}{|I|} < \frac{\vre}{2R}
\end{equation}
(where $|A|$ denotes the Lebesgue measure of $A \subset \R$). 
Using \eqref{eq: weird condition}, let $K \subset X$ be a compact
subset such that for each $t$,  there is 
$s_t \in J$ with $f_t^{(\br)} {u}(s_t, as_t+b)x_0 \in K$. 
We choose $c>0$ so that $I\cup J\subset [-c, c]$. 
%By decomposing $I$ into
%two subintervals, there is no loss of generality in assuming $s_0$ is one of the endpoints of $I$, so from now
%let us assume that $I = [s_0, s_0+r]$.
%\[
%c_1=\min\{c-s_0, 0\} \mbox{ and } d_1=\max\{0, d-s_0\}.
%\]
Multiplying matrices, one sees that 
\begin{align}
 & f_t^{(\br)} {u}(s, as+b)x_0  \label{eq: one sees} \\
 = &u\left( e^{(r_1+1)t}(s-s_t) , ae^{(r_2+1)t}(s-s_t) \right) f_t^{(\br)}
{u}(s_t, as_t+b)x_0. \notag
\end{align}
By assumption \eqref{eq: weird condition}, $f_t^{(\br)}
{u}(s_t, as_t+b)x_0 \in K$ where $K \subset X$ is a compact set. 
It follows from \cite[Theorem 6.1]{dm93} that given $\vre >0$ there exists a compact subset 
$K_0$ of $X$ such that 
for every $x\in K$ and every $t\ge 0$ one has
\begin{align}
&\left| \left \{s\in [-c,c]:  u \left(e^{(r_1+1)t}(s-s_t) , ae^{(r_2+1)t}(s-s_t) \right) x\notin K_0
  \right \} \right|   \label{eq;epsilon} \\
 &< \left (\frac{\vre |I|}{4cR} \right)2c. \notag
\end{align}
Combining \eqref{eq: one sees} with \eqref{eq;epsilon} gives
\eqref{eq: what we need}. 
%one finds that
%for any $t \geq 0$, 
%$$
%|\{s \in I : f_t \bar{u}(s, as+b) \in K\}| \geq (1-\vre)|I|. 
%$$
%This implies that for any 
%In particular (\ref{eq;epsilon}) holds for $x=f_t \bar{u}(s_0, as_0+b)$. 
%Thus for any $t \ge 0$, 
%$$f_*\nu|_I(K) \geq 1-\vre$
\end{proof}

Next we show unipotent invariance. 
\begin{lem}\label{lem;invariance}
Any weak-* limit   of  (\ref{eq;limit}) is invariant under 
some one dimensional unipotent subgroup $U$ of $G$ 
normalized by $\left\{f_t^{(\br)}:t\in \mathbb R \right\}$. 
\end{lem}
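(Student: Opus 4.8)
The plan is to show that any weak-* limit $\mu$ of the averages \eqref{eq;limit} is invariant under the one-parameter subgroup $U$ of $G$ whose Lie algebra is $\spa(E_{13})$ if $r_1 \geq r_2$, or $\spa(E_{23})$ if $r_2 > r_1$; by symmetry in the two coordinates we may assume $r_1 \geq r_2$ and work with $U = U_{13} = \{u(v,0) : v \in \R\}$ (note $U$ is a proper subgroup, contained in the image of $u$). The crucial observation is that conjugating the translation by $u(s, as+b)$ through $f_t^{(\br)}$ produces, by the computation \eqref{eq: one sees}, a unipotent translation $u\big(e^{(r_1+1)t}(s-s_t), a e^{(r_2+1)t}(s-s_t)\big)$, and as $t \to \infty$ the first coordinate is exponentially larger than the second (when $r_1 > r_2$), or they scale at the same rate (when $r_1 = r_2$, in which case $U$ should be taken to be the full two-parameter group $\{u(v, av) : v\}$, which is one-dimensional precisely because $a$ is fixed). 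So the measure $\bar\nu$, pushed forward under $f_t^{(\br)}$, becomes more and more spread out in the $U$-direction, and in the limit this forces $U$-invariance.

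The key steps, in order, would be: (i) Fix $\psi \in C_c(X)$ and an element $u_0 = u(w, aw) \in U$ (or $u_0 = u(w,0)$ in the generic case); I want to show $\int \psi(u_0 x)\, d\mu(x) = \int \psi \, d\mu(x)$. (ii) Using uniform continuity of $\psi$, it suffices to control, for fixed $T$ (eventually $T \to \infty$), the difference
\[
\frac{1}{T}\int_0^T \int_I \left[\psi\big(u_0 f_t^{(\br)} u(s,as+b)x_0\big) - \psi\big(f_t^{(\br)} u(s,as+b)x_0\big)\right] d\nu(s)\, dt.
\]
(iii) Commute $u_0$ past $f_t^{(\br)}$: since $f_t^{(\br)}$ normalizes $U$, $u_0 f_t^{(\br)} = f_t^{(\br)} u_t$ where $u_t = (f_t^{(\br)})^{-1} u_0 f_t^{(\br)} = u(e^{-(r_1+1)t} w, \ldots)$ contracts as $t \to \infty$; better, absorb $u_0$ into the $u(s, as+b)$ factor by a change of variables $s \mapsto s'$ in the inner integral, exploiting that left multiplication by $u_0$ on the image of $u(\cdot, a\cdot + b)$ is a translation $s \mapsto s + w$ (this uses that $u_0$ was chosen in the tangent direction of the curve $s \mapsto u(s, as+b)$, i.e. $u(s+w, a(s+w)+b) = u(w, aw)u(s, as+b)$ — exact, since these commute and the $E_{13}, E_{23}$ blocks add). (iv) The inner integral then becomes $\int \psi(f_t^{(\br)} u(s', as'+b) x_0)\, h(s'-w)\, ds'$ versus $\int \psi(f_t^{(\br)} u(s, as+b)x_0)\, h(s)\, ds$, so the difference is controlled by $\|\psi\|_\infty \int_I |h(s-w) - h(s)|\, ds$, which is bounded uniformly in $t$ but does not go to zero. (v) To finish, average over a window: replace $w$ by $w/L$ and use a telescoping/Fubini argument, or more cleanly, use the standard trick of averaging the translate $u_0$ itself over a small interval and invoking that $\frac1T \int_0^T$ of a function composed with $u\big(e^{(r_1+1)t}\cdot, \ldots\big)$-translates becomes translation-invariant in the limit. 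The honest route is: for the limit measure $\mu$, pick any $w$ and use that $f_t^{(\br)} u(s, as+b) x_0 = f_t^{(\br)} u(s-w, a(s-w)+b)x_0$ after substituting $s \to s+w$, combined with $\nu \ll \mathrm{Leb}$ so that $\|(\text{translate of }\nu\text{ by }w) - \nu\|_{TV}$ is small for small $w$ and we can iterate to get all $w$.

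\textbf{Main obstacle.} The delicate point is step (v): the naive commutation gives a difference governed by $\|h(\cdot - w) - h(\cdot)\|_{L^1}$, which is \emph{not} small for fixed $w$, so a single change of variables does not yield invariance under all of $U$ directly. The resolution is that absolute continuity of $\nu$ gives continuity of $w \mapsto \bar\nu * \delta_w$ in total variation, hence $\mu$ is invariant under a one-parameter \emph{pseudo-group} of translations, which by a standard argument (or by just noting the set of $w$ for which invariance holds is a closed subgroup of $\R$ containing arbitrarily small elements, using that we can make the error as small as we like by shrinking $w$ and then using that $\mu$ is already a fixed limit measure) upgrades to invariance under all of $U$. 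Concretely: for every $\eta > 0$ there is $\delta > 0$ with $|w| < \delta \Rightarrow \int_I |h(s-w)-h(s)|\,ds < \eta$; this shows $(u_0)_* \mu = \mu$ for $|w| < \delta$, and since $\{w : (u(w,aw))_*\mu = \mu\}$ is a subgroup of $\R$, it is all of $\R$. The only real work beyond this is verifying the exact conjugation identity \eqref{eq: one sees} and checking that the chosen $U$ is indeed normalized by $f_t^{(\br)}$ and is one-dimensional, both of which are immediate matrix computations.
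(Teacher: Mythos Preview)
Your overall strategy---commute the unipotent element $u_0$ through $f_t^{(\br)}$, then absorb it into $u(\ell(s))$ via a change of variables in $s$, and use continuity of translation in $L^1$---is exactly the paper's approach. But there is a genuine error in your steps (iii)--(v). After commuting, $u_0 f_t^{(\br)} = f_t^{(\br)} u_t$ with $u_t = u\big(e^{-(r_1+1)t}w,\, a e^{-(r_2+1)t}w\big)$ (take $r_1=r_2$ for concreteness), and absorbing $u_t$ into $u(\ell(s))$ shifts $s$ by $e^{-(r_1+1)t}w$, \emph{not} by $w$ as you write in (iv). The correct bound on the difference in (iv) is therefore
\[
\|\psi\|_\infty\,\bigl\|h(\cdot - e^{-(r_1+1)t}w) - h\bigr\|_{L^1},
\]
which \emph{does} tend to $0$ as $t\to\infty$ by continuity of the regular representation of $\R$ on $L^1(\R)$. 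This gives $(u_0)_*\mu=\mu$ for every $w$ directly, and your step (v) is unnecessary.

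Your step (v) is also logically broken as stated: the inequality $\|h(\cdot-w)-h\|_{L^1}<\eta$ for $|w|<\delta$ would only yield $\bigl|\int\psi\,d(u_0)_*\mu-\int\psi\,d\mu\bigr|\le\|\psi\|_\infty\,\eta$, i.e.\ approximate invariance, not $(u_0)_*\mu=\mu$. Approximate invariance for small $w$ does not bootstrap to exact invariance via the closed-subgroup argument, since you have not exhibited any nonzero $w$ at which exact invariance holds.

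One further point you gloss over: when $r_1>r_2$ and $u_0=u(w,0)$ with $a\neq 0$, the conjugated element $u_t=u(e^{-(r_1+1)t}w,0)$ is \emph{not} tangent to the line, so after the change of variables there is a leftover factor $u\big(0,-ae^{-(r_1+1)t}w\big)$. Conjugating this back out through $f_t^{(\br)}$ gives $u\big(0,-ae^{(r_2-r_1)t}w\big)$, which tends to the identity precisely because $r_2-r_1<0$. The paper carries out this second estimate explicitly; you allude to the phenomenon in your plan but never address it in the steps.
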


\begin{proof}
To simplify the notation we let 
\[
\ell:\mathbb R\to \mathbb R^2 , \ \ell(s):=(s, as+b)^{\mathrm{tr}}.
\]
We first prove that  in the case 
$r_1=r_2$, any  limit measure of (\ref{eq;limit}) is invariant under 
$U=\{u(s, as): s\in \mathbb R\}$. 
It suffices to show that for any $\tilde s \in \R$, 
\begin{equation}\label{eq;asymp}
\lim_{t\to \infty}(f_{t})_* \bar{\nu} -(u(\tilde s,a\tilde s)f_{t})_* \bar{\nu}=0.
\end{equation}
Let $h \in L^1(\R)$ be a non-negative function such that $d\nu(s) =
h(s)ds,$ and let $\psi\in C_c(X)$. We have: 
\begin{eqnarray*}
& & \int_X \psi\, d\big[(f_{t})_* \bar{\nu} - (u(\tilde s,a\tilde s)f_{t})_*
\bar{\nu} \big] \\
& = &  \int_{\R} \big[\psi(f_t {u}(\ell(s))x_0)-\psi(u(\tilde s,a\tilde s)f_t {u}(\ell(s))x_0) \big] h(s)\, ds \\
& = &\int_{\R} 
\big[\psi(f_t {u}(\ell(s))x_0)-\psi(f_t {u}(\ell(s + e^{-3t/2}\tilde s))x_0) \big] h(s)\, ds. 
\end{eqnarray*}
By continuity of $\psi$, the integrand converges pointwise to 0 as $t
\to \infty.$ Since $h \in L^1(\R)$ and $\psi$ is
bounded, using the dominated convergence theorem we see that the limit
is zero. 
This implies (\ref{eq;asymp}).

If $r_1>r_2$ we show that 
any  limit measure is invariant under 
$U_{13}:=\{u(s, 0): s\in \mathbb R\}$. 
It suffices to show that for any $\tilde s \in \R$, 
\begin{equation}\label{eq;asymp2}
\lim_{t\to \infty}\left(f_{t}^{(\br)}\right)_* \bar{\nu} -\left(u(\tilde s,0)f_{t}^{(\br)}\right)_* \bar{\nu}=0.
\end{equation}
Let $\psi, h$ be as above; set
$s': = s+e^{-(1+r_1)t}\tilde s$ and compute as follows: 
\[
\begin{split}
& \int_X \psi\, d\left[\left(f^{(\br)}_{t}\right)_*\bar {\nu} - \left(u(\tilde s,0)f^{(\br)}_{t}\right)_*
\bar{\nu} \right] \\
 = &  \int_{\R} \left[\psi \left(f^{(\br)}_t {u}(\ell(s))x_0 \right)-\psi\left(u(\tilde s,0)f^{(\br)}_t {u}(\ell(s))x_0\right) \right] \, d\nu(s) \\
 = &\int_{\R} 
\left[\psi \left(f^{(\br)}_t {u}(\ell(s))x_0 \right )-\psi \left(f^{(\br)}_t {u}( e^{-(1+r_1)t}\tilde s,
0)u(\ell(s))x_0\right) \right] \, d\nu(s) \\
 = & \int_{\R} \left[\psi \left( f_t^{(\br)} {u} (\ell(s))x_0
\right )- \psi \left (f_t^{(\br)} {u}(\ell(s')) x_0\right)
\right] \,  d\nu(s)+ \\ 
 & \int_{\R} \left[ \psi \left(f_t^{(\br)} {u}(\ell(s'))x_0
\right )- \psi \left(f_t^{(\br)} u(0, -ae^{-(1+r_1)t}) {u}(
\ell(s')) x_0
\right) \right]  d\nu(s)
. 
\end{split}
\]
By a change of variables, the absolute value of the first summand in this integral is bounded
above by $2 \sup |\psi| \int_{\R} |h(s)-h(s')|ds$, which tends to
zero as $t \to +\infty$ since $s' \to s$ and the regular
representation of $\R$ on $L^1$ is continuous. 

To bound the second summand we argue as follows. 
\[
\begin{split}
&  \int_{\R} \left[ \psi \left(f_t^{(\br)} {u}(\ell(s'))
x_0\right )- \psi \left(f_t^{(\br)} u(0, ae^{-(1+r_1)t}){u}
(
\ell(s')) x_0
\right) \right] \, d\nu(s)  \\ 
=& 
\int_{\R} \left[ 
\psi \left( 
f_t^{(\br)} {u}(\ell(s'))x_0
\right) - \psi \left( u(0, as_0e^{(r_2-r_1)t}) f_t^{(\br)} {u}(\ell(s'))x_0
\right) 
\right ] \, d\nu(s),
\end{split} 
\]
and this tends to zero by the uniform continuity of $\psi$ and the dominated convergence
theorem.
% as in the proof ofLemma \ref{lem;invariance}. 
Hence $\left(f^{(\br)}_{t}\right)_* \bar{\nu}  -
\left(\exp(s_0 E_{13}) f^{(\br)}_t \right)_* \bar{\nu} \to_{t
  \to \infty} 0$. Since $\mu$ is a sequential limit as $T \to \infty$,
we see that $\mu$ is $U_{13}$-invariant, as required. 

Finally we consider the case where $r_1<r_2$. If $a\neq 0$ then 
a similar argument as for the case where $r_1>r_2$ implies 
the invariance for $U_{23}$. If $a=0$ then 
the argument for the case where $r_1=r_2$ goes through and shows 
that the limit measure is invariant under $U_{13}$. 
\end{proof}

\begin{prop}\label{prop;dynamical2}
Let $\lambda$ be a probability measure on $\R^2$.  Suppose that  
%the measure $\bar{\nu} = \bar{u}_* \lambda$ satisfies
\begin{equation}\label{eq: assumption lambda}
\frac1T \int_0^T \left(f^{(\br)}_{t} \bar u\right)_*\lambda
\, dt \to_{T \to \infty} m.
\end{equation}
Then $\lambda$-almost every $\bv \in \R^2$
admits no improvement in Dirichlet's theorem w.r.t. weights $\br$.  
\end{prop}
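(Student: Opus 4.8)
The plan is to translate the dynamical statement \eqref{eq: assumption lambda} into a statement about Dirichlet-improvability using the standard dictionary between Diophantine properties of $\bv$ and the behaviour of the orbit $\{f^{(\br)}_t \bar u(\bv)\}$ in $X$. Recall that $\bv$ admits a $\sigma$-improvement w.r.t. weights $\br$ precisely when, for all large $Q$, the lattice $u(\bv)\Z^3$ dilated by an appropriate diagonal element contains a nonzero vector in a small box; rescaling $Q$ logarithmically, this says that the forward trajectory $f^{(\br)}_t \bar u(\bv)$ eventually stays inside a fixed compact subset $C_\sigma \subset X$ consisting of lattices whose shortest vector (in the relevant weighted sense) is at least some positive constant depending on $\sigma$ — equivalently, it avoids the open neighbourhood $X \sm C_\sigma$ of the cusp defined by a weighted Minkowski-type inequality. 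So $\bv \in \DI(\sigma)$ (w.r.t. $\br$) implies $f^{(\br)}_t \bar u(\bv) \in C_\sigma$ for all $t \ge t_0(\bv)$.

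Now I would argue by contradiction: suppose the set $B$ of $\bv$ with $\bar u (\bv)$ admitting \emph{some} improvement has $\lambda(B) > 0$. Writing $B = \bigcup_{\sigma < 1} B_\sigma$ where $B_\sigma = \{\bv : \bv \in \DI(\sigma) \text{ w.r.t. } \br\}$, and further decomposing each $B_\sigma$ according to the threshold time $t_0$, one finds a $\sigma$, a time $t_0$, and a set $B' \subset \R^2$ with $\lambda(B') =: \delta > 0$ such that $f^{(\br)}_t \bar u(\bv) \in C_\sigma$ for all $t \ge t_0$ and all $\bv \in B'$. Fix a function $\psi \in C_c(X)$ with $0 \le \psi \le 1$ which is supported in the open set $X \sm C_\sigma$ and with $\int_X \psi \, dm =: \eta > 0$ (such $\psi$ exists since $X\sm C_\sigma$ is a nonempty open set and $m$ has full support). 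Then for every $\bv \in B'$ and every $T > t_0$,
\[
\frac1T \int_0^T \psi\!\left(f^{(\br)}_t \bar u(\bv)\right) dt \le \frac{t_0}{T} \to_{T \to \infty} 0,
\]
so by the dominated convergence theorem
\[
\frac1T \int_0^T \left(f^{(\br)}_t \bar u\right)_* \lambda \, (\psi) \, dt
= \int_{\R^2} \left( \frac1T \int_0^T \psi\!\left(f^{(\br)}_t \bar u(\bv)\right) dt \right) d\lambda(\bv)
\]
has $\limsup_{T\to\infty}$ at most $\lambda(\R^2 \sm B') \cdot 1 + \lambda(B')\cdot 0 = 1 - \delta$. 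On the other hand, by the hypothesis \eqref{eq: assumption lambda} this quantity converges to $\int_X \psi \, dm = \eta$. I would then need $\eta > 1 - \delta$ to get a contradiction, which need not hold directly; the fix is to note $\delta$ can be taken arbitrarily close to $\lambda(B)$ but is fixed once $B$ is, so instead one chooses $\psi$ more carefully — since $m$ is a probability measure with full support, for the fixed $\delta$ one can pick $\psi$ supported near the cusp with $\int \psi\, dm > 1 - \delta$ by choosing $C_\sigma$-complement large enough; but $C_\sigma$ is determined by $\sigma$, not free. The clean way around this is to observe that improvability in Dirichlet's theorem for $\bv$ actually forces the orbit into $C_\sigma$ for \emph{all} $t\ge t_0$, and $C_\sigma$ has $m(C_\sigma) < 1$ strictly; then taking $\psi$ an indicator-like bump with $\int \psi \, dm$ arbitrarily close to $1 - m(C_\sigma) > 0$ and running the same averaging inequality gives $\frac1T\int_0^T (\cdots) \le \lambda(B')\cdot \frac{t_0}{T} \cdot 1 + (1-\lambda(B'))$, whose limsup is $1 - \delta$, contradicting convergence to a value exceeding $1-\delta$ once $\delta$ is large — so one must first replace $\lambda$ by its restriction (renormalised) to $B'$, which still satisfies a version of \eqref{eq: assumption lambda}? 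It does not in general.

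The correct and simplest route, which I would adopt: restrict attention to the conditional structure by noting that \eqref{eq: assumption lambda} gives, for the \emph{fixed} bump $\psi$ supported off $C_\sigma$ with $\int\psi\,dm=\eta>0$, that $\frac1T\int_0^T\int \psi(f^{(\br)}_t\bar u(\bv))\,d\lambda(\bv)\,dt \to \eta$. If $\lambda(B')=\delta>0$ with the orbit of every $\bv\in B'$ eventually in $C_\sigma=\{\psi=0\}$, then the integrand restricted to $B'$ contributes, for $T$ large, at most $\frac{t_0}{T}\|\psi\|_\infty \lambda(B')\to 0$, while the contribution from the complement is at most $1-\delta$ uniformly; hence $\eta=\lim \le 1-\delta$. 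So we get the bound $\eta\le 1-\lambda(B_\sigma^{(t_0)})$ for every $\sigma,t_0$, i.e. $\lambda(B_\sigma^{(t_0)})\le 1-\eta(\sigma)$ where $\eta(\sigma)=\int\psi_\sigma\,dm>0$ can be chosen for each $\sigma$. This does \emph{not} immediately give $\lambda(B)=0$. The genuine resolution — and the step I expect to be the main obstacle — is that one does not argue by contradiction on positivity of $\lambda(B)$, but rather proves the statement pointwise $\lambda$-a.e. by showing $\lambda$-a.e. $\bv$ has $\frac1T\int_0^T\psi(f^{(\br)}_t\bar u(\bv))\,dt\to\int\psi\,dm$ for a countable family of $\psi$'s separating the cusp neighbourhoods; this follows from \eqref{eq: assumption lambda} together with a maximal/Fubini argument (the time-averages along individual fibres converge in $L^1(\lambda)$, hence along a subsequence $\lambda$-a.e., and monotonicity in $T$ of the cusp-excursion functional upgrades this), after which $\bv\in\DI(\sigma)$ for some $\sigma$ would force the time-average of the corresponding $\psi_\sigma$ to be $0<\int\psi_\sigma\,dm$, a contradiction $\lambda$-a.e. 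The crux is thus the passage from the averaged convergence \eqref{eq: assumption lambda} to almost-everywhere control of individual orbits, which I would handle via the observation that the relevant functionals are essentially monotone in $t$ (improvability is an eventual, hence tail, property) so that the Cesàro convergence in \eqref{eq: assumption lambda} plus Fatou suffices without needing a pointwise ergodic theorem.
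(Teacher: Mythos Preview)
Your proposal has a genuine gap, and you correctly diagnose where it is: the naive contradiction yields only $\lambda(B_\sigma) \le 1 - \eta(\sigma)$, not $\lambda(B_\sigma)=0$, and your suggested fixes (choosing $\psi$ with large integral, invoking ``monotonicity'' plus Fatou) do not close it. Fatou applied to $\psi \ge 0$ supported off $C_\sigma$ gives exactly the same inequality $\eta \le 1-\delta$ you already had, and there is no monotonicity in $T$ of the time averages that would upgrade Ces\`aro convergence of the $\lambda$-integral to $\lambda$-a.e.\ convergence of individual orbit averages.

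The missing idea is the one you explicitly considered and discarded: restrict $\lambda$ to the bad set. You are right that the hypothesis \eqref{eq: assumption lambda} does not \emph{obviously} transfer to the normalised restriction $\lambda_0 := \lambda|_A/\lambda(A)$ --- but in fact it does, and the reason is \emph{ergodicity of $m$ under $\{f^{(\br)}_t\}$} (Moore's ergodicity theorem). The paper argues as follows. Rather than using the $C_\sigma$ criterion, use the stronger fact that density of the forward orbit $\{f^{(\br)}_t \bar u(\bv): t\ge 0\}$ already implies no Dirichlet improvement. If density fails on a set of positive $\lambda$-measure, then for some basic open set $U_i \subset X$ the set $A=\{\bv: f^{(\br)}_t\bar u(\bv)\notin U_i \text{ for all } t\ge 0\}$ has $\lambda(A)>0$. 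Write $\lambda = \lambda(A)\lambda_0 + (1-\lambda(A))\lambda_1$ with $\lambda_0, \lambda_1$ the normalised restrictions, and pass to a subsequence $T_n$ along which the time-averaged pushforwards of $\lambda_0$ converge weak-$*$ to some $\mu_0$. Since the full average converges to $m$, the $\lambda_1$-piece also converges along $T_n$ to some $\mu_1$, and $m = \lambda(A)\mu_0 + (1-\lambda(A))\mu_1$. Both $\mu_0,\mu_1$ are $\{f^{(\br)}_t\}$-invariant; ergodicity of $m$ then forces $\mu_0=\mu_1=m$. But $\mu_0(U_i)=0$ by construction, contradicting full support of $m$.

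So the step you flagged as ``It does not in general'' is exactly where the work happens, and the tool you need is ergodicity of the Haar measure under the one-parameter diagonal flow.
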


\begin{proof}
 According to \cite[Prop. 2.1]{KW_di}, if $\left\{f^{(\br)}_t \bar{u}(\bv): t
 \geq 0 \right\}$ is dense in $X$ then $\bv$ admits no improvement in
 Dirichlet's theorem w.r.t. weights $\br$. Suppose by contradiction
 that 
$$
\lambda \left( \left\{ \bv : \left\{ f^{(\br)}_t \bar{u}(\bv): t
 \geq 0\right\} \text{ is not dense} \right\} \right)>0.
$$
Let $\{U_1, U_2, \ldots\}$ be a countable collection of open subsets of $X$ which form a basis for
the topology of $X$. Then for some $i$, 
$$
\lambda (A)> 0, \ \text{ where } A: = \left\{ \bv : \forall t \geq 0, \,  f^{(\br)}_t
    \bar{u}(\bv) \notin U_i  \right\}.
$$
Let $\lambda_0$ be the (normalized) restriction of $\lambda$ to $A$,
let $\lambda_1$ be the (normalized) restriction of $\lambda$ to the
complement of $A$, and choose a sequence $\{T_n\}$ with $T_n\to \infty$ such that 
$$\mu_0:=\lim_{n \to \infty} \frac{1}{T_n} \int_0^{T_n}
\left(f^{(\br)}_{t}\bar u \right)_*\lambda_0 \,
dt$$ 
exists. 
Then $\mu_0$
 gives zero mass to $U_i$. In view of \eqref{eq: assumption lambda},
 the limit $\mu_1 = \lim_{n \to \infty} \frac{1}{T_n} \int_0^{T_n}
 \left(f^{(\br)}_{t}\bar u \right)_*\lambda_1 \,
dt$ also exists, and $m$ is a convex combination of $\mu_0$ and
$\mu_1$ with weights $\lambda(A), \, 1-\lambda(A)$. Both measures
$\mu_0, \mu_1$ are invariant under $\left\{f^{(\br)}_t\right\}$, and since $m$ is
ergodic, $m = \mu_0=\mu_1$. This contradicts the fact that
$\mu_0(U_i)=0$. 
 \end{proof}

\begin{proof}[Proof of Theorem \ref{thm;equiline}]
Let $\mu$ be a weak-* limit of  (\ref{eq;limit}).
Then $\mu$ is invariant under the one parameter diagonal subgroup 
$F:=\left\{f_t^{(\br)}:t\in \mathbb R \right\}$. It follows from Lemma \ref{lem;invariance} that
$\mu$ is also invariant under some one-parameter unipotent group $U$
normalized by $F$. 
Lemma \ref{lem;nonescape} implies that $\mu$ is a probability measure. Therefore 
$\mu=m$ according to Theorem \ref{thm;measure}. Since $\mu$ is an arbitrary weak-* limit
as $T\to \infty$, the conclusion follows. 
\end{proof}

 \begin{proof}[Proof of Theorem \ref{thm;dirichlet} and \ref{thm: weights}(\rmnum{1})]
We only prove the latter since the former is a special case. 
By switching the roles of $x$ and $y$ there is no loss of generality
in assuming that $\mathcal{L}$ is not vertical, i.e. it is given by an
equation of the form $s \mapsto \ell(s) := (s, as+b)$ for some $a, b \in \R$. 
Let
$\tilde s\in \mathbb R$ such that $\ell(\tilde s)$ is badly approximable
w.r.t.~weights $\br$. According to Dani's
correspondence \cite{d85}, and its generalization to the framework of
approximation with weights \cite{kleinbock duke}, there is a compact $K \subset X$ such that
$f_t ^{(\br)}\bar{u}(\ell(\tilde s)) \in K$ for all $t \geq0$. That is, \eqref{eq:
  weird condition} is satisfied. Now the conclusion is 
immediate from 
% Without loss of generality we assume that the line $\mathcal L$ is
% given by the equation $\mathrm{y}=a\mathrm{x}+b$.
% Suppose that $(s_0, as_0+b)$ is a badly approximable vector on $\mathcal L$. 
%  It follows from Dani's correspondence \cite{d85} that there exists a compact subset $K_0$ of $X$ such that 
%\[
%f_t u(s_0, as_0+b)\Gamma\in K_0
%\]
%for every $t\ge 0$. 
%Let $I$ be an arbitrary interval of positive length and $\lambda$ be the normalized (probability) volume measure of $\mathcal L|_I$.
%It follows from Theorem \ref{thm;equiline}  that 
%(\ref{eq;assumption}) holds for $\nu$ defined in (\ref{eq;lift}).
%So 
Theorem \ref{thm;equiline} and 
Proposition \ref{prop;dynamical2}. % implies that
%for almost every $s\in I$ the DT cannot be  improved for $(s, as+b)$.
%Since $I $ is arbitrary the conclusion follows. 
 \end{proof}

\section{Equidistribution of a nondegenerate curve}\label{sec: curve}
The goal of this section is to prove Theorems \ref{prop;curve}, 
\ref{prop;equicurve} and \ref{thm: weights}(ii). Our argument
uses many ideas of Shah \cite{s09, s10} but is made significantly simpler
by the extra averaging with respect to $t$, appearing in
Proposition  \ref{prop;dynamical2}. 

Let the
notation be as in Theorem \ref{prop;equicurve}. 
We write $f_t=f_t^{(1/2,1/2)}$ and  $\varphi=(\varphi_1, \varphi_2)$ where each
$\varphi_i$ is a $C^2$ function on $[0,1]$.
Without loss of generality we further assume that $r_1\ge r_2$. 
We claim that $\varphi'_1(s) \neq 0$ for a.e. $s$; indeed, set 
$$
A:= \{s \in [0,1]: \varphi'_1(s)=0\}
$$
and let $A'$ denote the set of Lebesgue density points of $A$. Then
$A$ and $A'$ have the same Lebesgue measure, and by Rolle's
theorem, for $s \in A'$, 
$$\varphi_1'(s)=\varphi_1''(s)=0.$$
Thus the Wronskian
determinant of $\varphi'$ vanishes on $A'$, so by nondegeneracy $A$ and $A'$ must have
measure zero. 

 It follows that there exists a countable 
collection $\mathcal I$ of closed intervals such that 
\begin{itemize}
\item $\cup _{\mathcal I} I$ has full measure in $[0,1]$ and $I_1\cap I_2$ contains at most one point for distinct 
$I_1, I_2\in \mathcal I$.

\item $\varphi'_1(s)\neq 0$ for every $s\in \bigcup_{I\in \mathcal
    I}I^\circ$ (where $I^\circ$ is the interior of $I$).
\end{itemize}

Therefore it suffices to prove Theorem \ref{prop;equicurve} for each closed interval
properly contained in some
 $I\in \mathcal I$, replacing $\nu$ with the restriction of $\nu$ to
 this closed interval. So we assume without loss of generality that $\varphi_1'(s)\neq 0$
 for every  
 $s\in [0,1]$. 
 
  There exists a continuously differentiable function $M: [0,1]\to \SL_2(\mathbb R)$ such that 
 $M(s)\varphi'(s)=\mathbf e_1$. We define the map 
 \[
 z:[0,1]\to \SL_3(\mathbb R)\quad \mbox{by } z(s)=
 \left(
 \begin{array}{cc}
 M(s) & 0 \\
 0 & 1
 \end{array}
 \right).
 \]
 Let $\nu_\varphi$ be the probability measure on $X$ defined by 
\begin{equation}\label{eq;pushcurve}
\int _X\psi \, d\nu_\varphi= \int \psi(z(s)\bar{u}(\varphi(s))) \, d\nu(s)
\end{equation}
for every $\psi\in C_c(X)$. 
We set 
\[
\nu_{\br}:=\left\{
\begin{array}{ll}
\nu_\varphi & \quad \mbox{if } r_1=r_2\\
(\bar{u})_*\nu & \quad \mbox{if } r_1> r_2.
\end{array}
\right.
\]

\begin{lem}\label{lem;cunipotent}
Any weak-* limit of 
\begin{equation}\label{eq;climit}
\frac{1}{T}\int_0^T \left(f_t^{(\br)} \right)_*\nu_\br \, dt \quad \mbox{as } T\to \infty 
\end{equation}
is invariant under the group $U_{13}=\{u(s,0): s\in \mathbb R\}.$
\end{lem}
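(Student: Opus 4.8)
The plan is to show, for each fixed $\tilde{s} \in \R$, that the difference
\[
\left(f_t^{(\br)}\right)_* \nu_\br - \left(u(\tilde{s},0) f_t^{(\br)}\right)_* \nu_\br
\]
tends to $0$ in the weak-* topology as $t \to \infty$; since any limit of the average \eqref{eq;climit} is a limit along a subsequence $T_n \to \infty$, this immediately gives $U_{13}$-invariance of the limit. I would treat the two cases in the definition of $\nu_\br$ separately. In the case $r_1 > r_2$, where $\nu_\br = (\bar u)_* \nu$, the argument is essentially the one already carried out in the proof of Lemma \ref{lem;invariance} for $r_1 > r_2$: conjugating $u(\tilde{s},0)$ through $f_t^{(\br)}$ replaces it by $u(e^{-(1+r_1)t}\tilde{s}, 0)$, which shifts the parameter $s \mapsto s + e^{-(1+r_1)t}\tilde{s}$ inside $\bar u(\varphi(s))$; continuity of the density $h$ of $\nu$ in $L^1$ and uniform continuity of $\psi \in C_c(X)$ kill the difference in the limit. (There is no longer a secondary term of the form $u(0, \ast)$ to handle since $\varphi_2$ is not affine, but the same change-of-variables estimate $2\sup|\psi|\int|h(s)-h(s')|\,ds$ applies once one writes $\varphi_2(s+e^{-(1+r_1)t}\tilde s) = \varphi_2(s) + O(e^{-(1+r_1)t})$ and pushes the $O$-term through $f_t^{(\br)}$, where it contracts.)

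The substantive case is $r_1 = r_2 = 1/2$, where $\nu_\br = \nu_\varphi$ is the pushforward under $s \mapsto z(s)\bar u(\varphi(s))$ with $z(s) = \mathrm{diag}(M(s), 1)$ and $M(s)\varphi'(s) = \EE_1$. Here $f_t := f_t^{(1/2,1/2)}$ commutes with $z(s)$ up to the point that $z(s) \in \SL_2(\R) \subset G$ sits in the top-left block, while $f_t$ acts as $\mathrm{diag}(e^{t/2},e^{t/2},e^{-t})$, so $f_t z(s) f_t^{-1} = z(s)$. Thus $f_t z(s) \bar u(\varphi(s)) x_0 = z(s) f_t \bar u(\varphi(s)) x_0$ — wait, $x_0$ here is the identity coset, but in general one computes $f_t \, z(s)\, u(\varphi(s)) = z(s)\, f_t\, u(\varphi(s))$, and $f_t u(\varphi(s)) = u(e^{3t/2}\varphi_1(s), e^{3t/2}\varphi_2(s)) f_t$. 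I would want to show that conjugating $u(\tilde s,0)$ past $f_t z(s)$ and expressing the result as a reparametrization of $s$ together with a correction that decays. Concretely, one writes $u(\tilde s, 0) z(s) = z(s)\, z(s)^{-1}u(\tilde s,0)z(s)$; since $z(s)$ acts only on the first two coordinates and fixes $\EE_3$, the conjugate $z(s)^{-1}u(\tilde s, 0)z(s)$ is still of the form $u(\bw(s))$ for a vector $\bw(s) = M(s)^{-1}(\tilde s, 0)^{\mathrm{tr}}$ depending smoothly on $s$. Then $f_t u(\bw(s)) = u(e^{3t/2}\bw(s)) f_t$, and since $e^{3t/2} \to \infty$ this is a large unipotent — but we are composing on the \emph{left}, so I would instead conjugate the other way: pull $f_t$ from the right of $u(\tilde s,0)$, i.e. write $u(\tilde s,0)f_t = f_t\, (f_t^{-1}u(\tilde s,0)f_t) = f_t\, u(e^{-3t/2}\tilde s, 0)$, so that
\[
u(\tilde s,0) f_t z(s)\bar u(\varphi(s)) = f_t\, u(e^{-3t/2}\tilde s,0)\, z(s)\,\bar u(\varphi(s)) = f_t\, z(s)\, u\!\left(z(s)^{-1}(e^{-3t/2}\tilde s,0)^{\mathrm{tr}}\right)\bar u(\varphi(s)),
\]
and now the inserted unipotent has parameter of size $O(e^{-3t/2})$, which is a first-order perturbation of the basepoint. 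Comparing $u(z(s)^{-1}(e^{-3t/2}\tilde s,0)^{\mathrm{tr}})\bar u(\varphi(s))$ with $\bar u(\varphi(s') )$ for an appropriate $s' = s'(t,s)$ obtained by absorbing the $\EE_1$-component of $z(s)^{-1}(e^{-3t/2}\tilde s, 0)$ into a shift of $s$ (using that the curve's velocity $z(s)\varphi'(s) = \EE_1$ matches this direction up to the $z(s)$-conjugation, which is precisely why $M$ was chosen), one is left with an $\EE_2$-direction discrepancy of size $O(e^{-3t/2})$ times something bounded. Applying $f_t z(s)$ again, the $\EE_2$-discrepancy is at worst $O(e^{3t/2} \cdot e^{-3t/2}) = O(1)$ — which is not good enough — so I would need the discrepancy to be of size $o(e^{-3t/2})$; this should follow from a second-order Taylor expansion, exploiting that $M(s)\varphi'(s) \equiv \EE_1$ implies the $\EE_2$-component of $M(s)(\varphi(s') - \varphi(s))$ is $O((s'-s)^2)$, hence $O(e^{-3t})$, and $e^{3t/2} \cdot e^{-3t/2} \cdot$ wait — I mean $e^{3t/2} \cdot O(e^{-3t}) \cdot (\text{from the }s'\text{-shift of order }e^{-3t/2})$, giving $O(e^{-3t/2}) \to 0$. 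Combined with the $L^1$-continuity argument for the reparametrization $s \mapsto s'$ (the Jacobian is $1 + O(e^{-3t/2})$), this yields the claimed asymptotic.

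The main obstacle, as indicated above, is the bookkeeping in the $r_1 = r_2$ case: one must verify that after conjugating the small perturbation $u(e^{-3t/2}\tilde s, 0)$ through $z(s)$ and $f_t$, the residual error in the $\EE_2$-direction — which gets \emph{expanded} by $f_t$ by a factor $e^{3t/2}$ — still decays. This is exactly where the defining property $M(s)\varphi'(s) = \EE_1$ of the conjugating cocycle $z(s)$ is used: it guarantees that the perturbation is, to first order, tangent to the curve, so that the transverse ($\EE_2$) error is quadratically small and survives the expansion. I would isolate this as the key estimate: for $s$ in a compact interval and $\delta = e^{-3t/2}\tilde s$,
\[
\big\langle \EE_2,\ M(s)\big(\varphi(s + c(s)\delta) - \varphi(s)\big) - \delta\, M(s)^{-1}(\cdot)\big\rangle = O(\delta^2)
\]
for the appropriate scalar $c(s)$, uniformly in $s$; granting this, the dominated convergence / $L^1$-continuity endgame is routine and parallels Lemma \ref{lem;invariance} verbatim.
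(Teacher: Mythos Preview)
Your treatment of the case $r_1=r_2$ is correct and essentially identical to the paper's: the paper first shifts the parameter $s\mapsto s+\tilde s\, e^{-3t/2}$ (via change of variables and $L^1$-continuity) and then uses the Taylor expansion of $\varphi$ together with $M(s)\varphi'(s)=\EE_1$ to show that
\[
f_t\,z(s)\,\bar u\!\left(\varphi(s+\tilde s\,e^{-3t/2})\right)=u\!\left(O(e^{-3t/2})\right)u(\tilde s,0)\,f_t\,z(s)\,\bar u(\varphi(s));
\]
you run the same two steps in the opposite order. (A small point you gloss over: after the change of variables one has $z(s-\delta)$ rather than $z(s)$, but since $z$ is continuous and commutes with $f_t$ this is harmless.)

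The case $r_1>r_2$, however, has a genuine gap. You claim that conjugating $u(\tilde s,0)$ through $f_t^{(\br)}$ ``shifts the parameter $s\mapsto s+e^{-(1+r_1)t}\tilde s$'' and that the argument of Lemma~\ref{lem;invariance} applies verbatim. But in Lemma~\ref{lem;invariance} the line was parametrized with $\ell_1(s)=s$, i.e.\ $\ell_1'\equiv 1$. For a general curve, after conjugation one obtains $\bar u(\varphi_1(s)+\delta,\varphi_2(s))$ with $\delta=e^{-(1+r_1)t}\tilde s$, and since $\varphi_1(s+\delta)=\varphi_1(s)+\varphi_1'(s)\delta+O(\delta^2)$, the first-coordinate discrepancy between this and $\varphi(s+\delta)$ is $(\varphi_1'(s)-1)\delta+O(\delta^2)$; pushed back through $f_t^{(\br)}$ this becomes $\tilde s(\varphi_1'(s)-1)+o(1)$, which does \emph{not} vanish. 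The correct reparametrization is the $s$-dependent shift $s\mapsto s+\delta/\varphi_1'(s)$, and then the $L^1$-continuity estimate you quote (valid for constant translations) no longer applies directly. Separately, your claim that the $\EE_2$-remainder ``contracts'' under $f_t^{(\br)}$ is incorrect: $f_t^{(\br)}$ \emph{expands} $U_{23}$ by $e^{(1+r_2)t}$; the net error $O(e^{(r_2-r_1)t})$ tends to zero only because $r_2<r_1$.

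The paper resolves this with a partition argument: it reduces to $d\nu=ds$, partitions $[0,1]$ into $N_t\asymp e^{(1+r_1)t}$ subintervals $I_k$, replaces $\varphi$ on each by its linearization $\ell_k$ (the $O(N_t^{-2})$ error survives multiplication by $e^{(1+r_1)t}$), and on each $I_k$ uses the \emph{constant} shift $\tilde s_k=\tilde s\,e^{-(1+r_1)t}/\varphi_1'(s_k)$. The endpoint losses from these shifts sum to $O(N_t\cdot e^{-(1+r_1)t})$, which is made small by the choice of the implicit constant. Your $s$-dependent-shift route can also be completed (Jacobian $1+O(\delta)$, approximate $h\in L^1$ by continuous functions), but as written it does not go through.
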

\begin{proof}
In the case where $r_1=r_2$
it suffices to prove that for any $\psi\in C_c(X)$, any $\vre>0$, and any $\tilde s\in \mathbb R$,
\begin{equation}\label{eq;goalinvariant}
\left|\int_0^1 \Big [\psi(f_tz(s)\bar{u}(\varphi(s)))
-\psi(u(\tilde s,0)f_tz(s)\bar{u}(\varphi(s)))\Big ]\, d\nu(s)
\right|<\vre
\end{equation}
provided that $t$ is sufficiently large.

We fix a $C^2$ extension of $\varphi$ on $[-1,2]$.
On the one hand, a change of variables, the boundedness of $\psi$, and
the continuity of the regular representation
imply that  
%\begin{equation}
%\label{eq;firsthalf1}
$$
\int_0^1\Big|\psi(f_tz(s)\bar{u}(\varphi(s)))-\psi(f_tz(s)\bar{u}(\varphi(s+\tilde
s e^{-3t/2})))\Big|\, d\nu(s)  \notag
\to_{t \to \infty} 0. 
$$
%\end{equation}
On the other hand,  since $\varphi$ is a $C^2$-function on a compact interval,  
\[
\varphi(s+\tilde s e^{-3t/2})=\varphi(s)+\tilde se^{-3t/2} \varphi'(s)+O(e^{-3t})
\ \text{ as }  t \to +\infty,
\]
where the implicit constant in the error term is independent of
$s$. Therefore
\begin{align}
& %\int_0^1     
f_tz(s)\bar{u}(\varphi(s+\tilde s e^{-3t/2}))
%\, ds 
\label{eq: commuting matrices}
\\ 
= & %\int_0^1 
f_t z(s) u\left[\varphi(s)+\tilde se^{-3t/2} \varphi'(s) +
  O\left(e^{-3t} \right) \right]\pi(e)  %\, ds 
\notag \\
 = & 
\left[  f_tz(s) u\left(\tilde s e^{-3t/2} \varphi'(s) + O(e^{-3t}) \right)
\left(f_tz(s)\right)^{-1}  \right] \left [f(t)z(s) \bar u(\varphi(s))\right] 
\notag \\
 = & 
u\left(\tilde sE_{13}  +O( e^{-3t/2} ) 
\right)
f_t z(s) \bar u(\varphi(s)) 
 \notag \\
 = & u(O\left(e^{-3t/2}) \right) u(\tilde s, 0) f_t z(s) \bar{u}(\varphi(s)).\notag
\end{align}
By uniform continuity of $\psi$, this implies that 
%\begin{equation}\label{eq;secondhalf}
\begin{align*}
&\int_0^1 \psi \left(f_tz(s)\bar{u}(\varphi(s+\tilde s e^{-3t/2})) \right) 
\, d\nu(s)   \\
\to &\int_0^1 \psi \left(u(\tilde s, 0) f_t z(s) \bar{u}(\varphi(s))
\right) \, d\nu(s) 
%\end{equation}
\end{align*}
as $t \to +\infty$.  
Now \eqref{eq;goalinvariant} follows
%from
%\eqref{eq;firsthalf2} and \eqref{eq;secondhalf}, 
for all large enough $t$. 

In the case where $r_1> r_2$  it suffices to show that 
for any $\psi\in C_c(X)$, any $\vre>0$, and any $\tilde s\in \mathbb R$,
\begin{equation}\label{eq;goalinvariant2}
\left|\int_0^1 \Big [\psi\left(f_t^{(\br)}\bar{u}(\varphi(s)) \right)
-\psi(u(\tilde s,0)f_t^{(\br)}\bar{u}(\varphi(s)))\Big ]\, d\nu(s)
\right|<\vre
\end{equation}
provided that $t$ is sufficiently large.

We first prove (\ref{eq;goalinvariant2}) for
$d\nu=ds$.
Let $N_t=[\delta e^{(1+r_1)t}]\in \mathbb N$
where 
\begin{equation}\label{eq;delta}
\delta=\varepsilon(16 \|\psi\|_{\mathrm {sup}}\|1/\varphi_1'\|_{\mathrm{sup}})^{-1}.
\end{equation}
Here 
\[
\|\psi\|_{\mathrm {sup}}:=\sup_{x\in X}|\psi(x)|\quad \mbox{and}\quad
\|1/\varphi_1'\|_{\mathrm{sup}}=\sup_{s\in [0,1]}|1/\varphi_1'(s)|.
\] 
In what follows we always assume
$t$ is large so that $N_t >1$.
We partition $I=\bigcup_{k=1}^{N_t}I_k$ where $I_k=[s_k,s_{k+1}]$
and $s_{k+1}-s_k=1/N_t$. 
Let 
\[
\ell_k(s)=\varphi(s_k)+(s-s_k)\varphi'(s_k).
\]
Then for all $s \in I_k$ we have 
\[
\varphi(s)=\ell_k(s)+O\left(N_t^{-2}\right)
\]
and, arguing as in \eqref{eq: commuting matrices}, 
$$f_t^{(\br)} \bar{u}(\ell_k(s)) = u(O(N_t^{-1}))f_t^{(\br)}
\bar{u}(\varphi(s)).$$ 
Therefore for $t$ sufficiently large  we have 
\begin{eqnarray*}
\left |\int_0^1\psi\left (f_t^{(\br)}\bar{u}(\varphi(s))\right)\, ds-
\sum_{k=1}^{N_t}\int_{I_k}\psi\left (f_t^{(\br)}\bar{u}(\ell_k(s))\right)ds
\right|
\le \frac{\varepsilon}{4}.
\end{eqnarray*}
The same holds for $\psi(u(\tilde s)\cdot)$ in place of $\psi$. 
Therefore to prove (\ref{eq;goalinvariant2}) it suffices to show that 
for $t$ sufficiently large 
\begin{equation}\label{eq;newgoal2}
\sum_{k=1}^{N_t}\int_{I_k}
\left|\psi\left (f_t^{(\br)}\bar{u}(\ell_k(s))\right)
-\psi\left (u(\tilde s, 0)f_t^{(\br)}\bar{u}(\ell_k(s))\right)
\right|
\, ds<\frac{\varepsilon}{2}.
\end{equation}
For $1\le k\le N_t$ let $\tilde s_k=\tilde se^{-(1+r_1)t)}\varphi_1'(s_k)^{-1}$. 
We have 
\begin{eqnarray}
& & u(\tilde s, 0)f_t^{(\br)}\bar{u}(\ell_k(s))  \notag \\
& =&  f_t^{(\br)}u(0, -\tilde s_k\varphi_2'(s_k)) \bar{u}(\ell_k(s+\tilde s_k)) \notag \\
& = & u(0,-\tilde s_k e^{(1+r_2)t}) f_t^{(\br) }   \bar{u}(\ell_k(s+\tilde s_k)). \label{eq;newgoal3}
\end{eqnarray}
By dominated convergence theorem and (\ref{eq;newgoal3}), to prove (\ref{eq;newgoal2})
it suffices to show that for $t$ sufficiently large
\begin{equation}\label{eq;newgoal4}
\sum_{k=1}^{N_t}\int_{I_k}
\left|\psi\left (f_t^{(\br)}\bar{u}(\ell_k(s))\right)
-\psi\left (f_t^{(\br)}\bar{u}(\ell_k(s)+\tilde s_k)\right)
\right|
\, ds<\frac{\varepsilon}{4}.
\end{equation}
The left hand side of (\ref{eq;newgoal4}) is 
\[
\le N_t(2\|\psi\|_{\mathrm{sup}}\tilde s e^{-(1+r_1)t}\|1/\varphi_1'\|_{\mathrm{sup}})\le \varepsilon/4
\]
by (\ref{eq;delta}) as required. 

Now we turn to the proof of (\ref{eq;goalinvariant2}) for general $\nu$. 
We write $\nu=h(s)\,ds$
for some nonnegative  function $h$ on $[0,1]$. 
The case for $\nu=ds$ implies the case where $h$
is a characteristic function of open subsets. 
By approximating functions in $L^1$ norm we get the results 
for characteristic functions and finally for any $h$. 
\end{proof}

\begin{lem}\label{lem;cnonescape}
Any weak-* limit of (\ref{eq;climit}) is a probability measure. 
\end{lem}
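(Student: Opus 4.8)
The plan is to prove non-escape of mass for the curve exactly as in Lemma~\ref{lem;nonescape}, by reducing to a uniform quantitative non-divergence estimate for unipotent trajectories in the style of Dani--Margulis. First I would reduce, as in the proof of Lemma~\ref{lem;nonescape}, to showing that for every $\vre > 0$ there is a compact $K_0 \subset X$ such that for all sufficiently large $t$,
\[
\left| \left\{ s \in [0,1] : f_t^{(\br)} \bar u(\varphi(s)) \notin K_0 \right\} \right| < \vre,
\]
and similarly with $\nu_\varphi$ in place of $(\bar u)_*\nu$ when $r_1 = r_2$; the passage from Lebesgue measure to an arbitrary absolutely continuous $\nu$ (splitting off the part where the density is large) is identical to what was done there, and the conjugating factor $z(s)$ ranges in a fixed compact set of $\SL_2(\R)$, so it can be absorbed.

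Next I would localize: partition $[0,1]$ into the intervals $I_k$ of length $1/N_t$ used in Lemma~\ref{lem;cunipotent} (with $N_t$ of order $e^{(1+r_1)t}$), and on each $I_k$ replace $\varphi$ by its linear approximation $\ell_k(s) = \varphi(s_k) + (s-s_k)\varphi'(s_k)$, so that $\varphi(s) = \ell_k(s) + O(N_t^{-2})$ and hence $f_t^{(\br)}\bar u(\varphi(s))$ and $f_t^{(\br)}\bar u(\ell_k(s))$ differ by $u(O(N_t^{-1}))$, a factor close to the identity. Thus it suffices to bound, uniformly in $k$, the measure of the set of $s \in I_k$ for which $f_t^{(\br)}\bar u(\ell_k(s))$ leaves $K_0$. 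Rescaling $I_k$ to unit length, this is a one-parameter family of the form $u(\text{linear in }s)\, x_k$ and the Dani--Margulis theorem \cite[Theorem 6.1]{dm93} (applied to the polynomial trajectory $s \mapsto f_t^{(\br)} u(\ell_k(s)) $, whose coordinates are affine in $s$) provides a compact $K_0$, depending only on $\vre$ and the bound on the "size" of these affine maps (controlled by $\|\varphi\|_{C^1}$), such that the bad set in each $I_k$ has relative measure $< \vre$. Summing over the $N_t$ intervals gives the bound $\vre$ for the total bad measure, exactly as in Lemma~\ref{lem;nonescape}. The case $r_1 = r_2$ is handled the same way after noting $\varphi$ is there nondegenerate so $\varphi'(s), \varphi''(s)$ are linearly independent, making the relevant trajectory genuinely quadratic and still covered by \cite[Theorem 6.1]{dm93}.

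The main obstacle I expect is the uniformity of the compact set $K_0$ over all the intervals $I_k$ and all large $t$: one must check that the hypothesis of \cite[Theorem 6.1]{dm93} (a bound on the norms of the coefficients of the polynomial trajectory, i.e. that it is not "too degenerate") is met uniformly, which ultimately comes down to the fact that $\varphi'$ is bounded away from $0$ and $\varphi$ is $C^1$ (in the case $r_1>r_2$) or that the Wronskian of $\varphi$ is bounded below on the relevant subinterval (in the case $r_1 = r_2$), both of which hold after the reductions already made in this section. Once that uniformity is in place, the summation over $I_k$ and the $L^1$-splitting for general $\nu$ are routine.
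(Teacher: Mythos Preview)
Your reductions (absorbing the compact factor $z(s)$, passing from an absolutely continuous $\nu$ to Lebesgue measure by splitting off the high-density part) are fine and match the paper. But the core of your plan --- linearize on intervals $I_k$ of length $1/N_t$ with $N_t\asymp e^{(1+r_1)t}$ and apply \cite[Theorem~6.1]{dm93} on each piece --- does not go through as stated.

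The form of \cite[Theorem~6.1]{dm93} used in Lemma~\ref{lem;nonescape} takes as input a base point $x$ lying in a \emph{fixed} compact set $K$; this was supplied there by the hypothesis~\eqref{eq: weird condition}. For the curve there is no such hypothesis, and your linearization does not manufacture one. Writing
\[
f_t^{(\br)}u(\ell_k(s))
=u\!\left(e^{(1+r_1)t}(s-s_k)\varphi_1'(s_k),\,e^{(1+r_2)t}(s-s_k)\varphi_2'(s_k)\right)
\cdot f_t^{(\br)}u(\varphi(s_k)),
\]
the base point $x_{k,t}:=f_t^{(\br)}\bar u(\varphi(s_k))$ is not known to lie in any compact set, and with your choice of $N_t$ the unipotent displacement over $I_k$ has \emph{bounded} size (of order $\delta^{-1}$). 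A bounded-length unipotent segment with uncontrolled base point can lie entirely in a cusp; Dani--Margulis says nothing in this regime. Your stated obstacle (``uniformity of the constants, controlled by $\|\varphi\|_{C^1}$'') misidentifies the missing input: it is the anchor point, not a coefficient bound. Choosing a smaller $N_t$ so that the unipotent pieces become long does not help either: one would then need to rule out the Diophantine obstruction (a rational subspace staying small along the whole piece), and on a single \emph{linear} piece this can fail even though $\varphi$ is globally nondegenerate.

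The paper avoids all of this by invoking the quantitative nondivergence theorem of Kleinbock--Margulis \cite[Proposition~2.3]{km98} directly: that result is precisely designed for nondegenerate maps $s\mapsto f_t^{(\br)}u(\varphi(s))$, uses the nonvanishing of the Wronskian globally to rule out the Diophantine obstruction, and requires no anchor point. After absorbing $z(s)$ and reducing to Lebesgue measure, the nondivergence statement you want is a one-line citation.
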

\begin{proof}
Since $z([0,1])$ is relatively compact, it suffices to prove 
no escape of mass replacing $\nu_\br$ by $(\bar{u})_*\nu$.
As in the proof of Lemma \ref{lem;nonescape}, we can reduce the
problem to the case that $\nu$ is the measure $ds$; then one uses \cite[Proposition
2.3]{km98}. 
\end{proof}

\begin{lem}\label{lem;equitwist}
We have
\[
\frac{1}{T}\int_0^T \left(f_t^{(\br)}\right)_*\nu_{\br} \, dt\to_{T \to
  \infty}  m.
\]
\end{lem}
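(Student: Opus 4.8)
The strategy is to invoke the measure classification theorem (Theorem \ref{thm;measure}) together with the two preparatory lemmas just proved. Fix a sequence $T_n \to \infty$ along which $\frac{1}{T_n}\int_0^{T_n}(f_t^{(\br)})_*\nu_\br\, dt$ converges weak-$*$ to some measure $\mu$; it suffices to show $\mu = m$ for every such limit. By Lemma \ref{lem;cnonescape}, $\mu$ is a probability measure, so there is no loss of mass and $\mu$ is genuinely a Borel probability measure on $X$.

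Next I would record the invariance properties of $\mu$. Since $\mu$ is an average over the one-parameter flow $f_t^{(\br)}$, a standard argument shows $\mu$ is invariant under $F := \{f_t^{(\br)} : t \in \R\}$: for a test function $\psi$, the difference between $\frac{1}{T_n}\int_0^{T_n}(f_{t+s}^{(\br)})_*\nu_\br\, dt$ and $\frac{1}{T_n}\int_0^{T_n}(f_t^{(\br)})_*\nu_\br\, dt$ is supported on two intervals of length $|s|$ and hence is $O(|s|/T_n) \to 0$. By Lemma \ref{lem;cunipotent}, $\mu$ is also invariant under $U_{13} = \{u(s,0) : s \in \R\}$, a one-parameter unipotent subgroup. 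Now $F$ is a one-parameter diagonalizable subgroup and $U_{13}$ a one-parameter unipotent subgroup; one checks directly from the matrix forms in \eqref{eq;notation} that $F$ normalizes $U_{13}$ (conjugation scales the $(1,3)$ entry by $e^{(r_1+1)t}$), that $FU_{13}$ is nonabelian (since that scaling factor is not identically $1$), and that $f_t^{(\br)}$ fixes no nonzero vector of $\R^3$ (its eigenvalues $e^{r_1 t}, e^{r_2 t}, e^{-t}$ are never all equal to $1$ for $t \neq 0$). Hence the hypotheses of Theorem \ref{thm;measure} are satisfied with $U = U_{13}$.

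Applying Theorem \ref{thm;measure} to the $FU_{13}$-invariant probability measure $\mu$ gives $\mu = m$. Since $\mu$ was an arbitrary weak-$*$ subsequential limit and the space of Borel probability measures on $X$ is sequentially compact in the weak-$*$ topology (using that there is no escape of mass, again by Lemma \ref{lem;cnonescape}), the full average $\frac{1}{T}\int_0^T (f_t^{(\br)})_*\nu_\br\, dt$ converges to $m$ as $T \to \infty$, which is the assertion of the lemma. The only place requiring any care is the verification that the limit measure is $F$-invariant and that, combined with the $U_{13}$-invariance from Lemma \ref{lem;cunipotent}, the pair $(F, U_{13})$ genuinely meets the non-abelian and no-fixed-vector hypotheses of Theorem \ref{thm;measure}; but these are immediate from the explicit matrix descriptions, so there is no serious obstacle — the substance of the argument has already been carried out in the preceding lemmas and in Section 2.
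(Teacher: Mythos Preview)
Your proposal is correct and follows essentially the same approach as the paper's own proof: take an arbitrary weak-$*$ limit $\mu$, use Lemma~\ref{lem;cnonescape} to get that $\mu$ is a probability measure, observe $F$-invariance, invoke Lemma~\ref{lem;cunipotent} for $U_{13}$-invariance, and apply Theorem~\ref{thm;measure}. You supply more detail than the paper (explicitly verifying the hypotheses of Theorem~\ref{thm;measure} and the $F$-invariance argument), but the structure is identical.
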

\begin{proof}
Let $\mu$ be a weak-* limit of (\ref{eq;climit}). It is easy to see that $\mu$ is invariant under 
$F:=\{f_t: t\in \mathbb R\}$. It follows from Lemma \ref{lem;cunipotent} that $\mu$ is invariant 
under the group $U_{13}$. In view of Lemma \ref{lem;cnonescape} the measure $\mu$ is a probability 
measure. Therefore Theorem \ref{thm;measure} implies that $\mu=m$. Since $\mu$
is an arbitrary weak-* limit, the conclusion follows. 
\end{proof}

\begin{proof}[Proof of Theorem \ref{prop;equicurve}]
If $r_1\neq r_2$, then the conclusion is contained in Lemma \ref{lem;equitwist}.
Now we prove the case where $r_1=r_2=1/2$.
  It suffices to show that given 
  $\psi\in C_c(X)$ and $\vre>0$
   one has
\begin{equation}\label{eq;cgoal}
\left |\frac{1}{T}\int_0^T\int_0^1\psi(f_t\bar{u}(\varphi(s)))\,
  d\nu(s)dt-\int_X \psi \, dm
\right |<\vre
\end{equation}
 for $T$ sufficiently large.
We first divide $[0,1]$ into finitely many closed intervals $\{I_k: 1\le k\le N\}$ such that for any
points $s, \tilde s\in I_k$ and any $x\in X$ one has 
%{\color{blue} I made a  slight change here, check that you agree.}
\begin{equation}\label{eq;uniform}
|\psi(z(\tilde s)^{-1}z(s)x)-\psi(x)|<\frac{\vre}{2}.
\end{equation}
Let $s_k$ be the left endpoint of the interval $I_k$.
Since the matrices $z(s)$ commute with $f_t$, we have 
\begin{eqnarray}\label{eq;rewrite}
 & & \frac{1}{T}\int_0^T\int_0^1\psi\left(f_t\bar{u}(\varphi(s))\right)\, d\nu(s) dt  \\
& = & \sum_{k=1}^N\frac{1}{T}\int_0^T\int_{I_k}\psi \left(z(s)^{-1}z(s_k)z(s_k)^{-1}f_tz(s)
\bar{u}(\varphi(s))\right)\, d\nu(s)  dt. \notag
\end{eqnarray}
In view of (\ref{eq;uniform}) and  (\ref{eq;rewrite}) to prove   (\ref{eq;cgoal}) it suffices to show
that for $T$ sufficiently large
\begin{align*}
&\left |
\frac{1}{T}\int_0^T\int_{I_k}\psi\left(z(s_k)^{-1}f_tz(s)
\bar{u}(\varphi(s))\right)\, d\nu(s) dt\right.  \\
&\left.
-|I_k| \int_X \psi\left(z(s_k)^{-1}x\right)\, dm \right|
<\frac{\vre}{2}.
\end{align*}
This follows from Lemma \ref{lem;equitwist} applied to the function $x
\mapsto \psi(z(s_k)x).$
\end{proof}

 \begin{proof}[Proof of Theorem \ref{prop;curve} and \ref{thm: weights}(ii)]
 Follows from Theorem \ref{prop;equicurve} and  Proposition \ref{prop;dynamical2}.
 \end{proof}

\section{Some examples}\label{sec: examples}
In this section we give some examples which explain the necessity of
conditions which appear in our theorems.

\subsection{Examples for Theorem \ref{thm;measure}}\label{subsec: elon}
All of the conditions of Theorem \ref{thm;measure} are necessary for
its validity. The following examples illustrate 
two of them which are not obvious to see. 

First we show that the assumption that $F$ has no nonzero invariant vectors in $\mathbb R^3$
is necessary. We can embed $\SL_2(\mathbb R)\ltimes \mathbb R^2$ into $G$ so that it 
induces an embedding of 
 \[
 Y=\left(\SL_2(\mathbb R)\ltimes \mathbb R^2\right)/ \left(\SL_2(\mathbb Z)\ltimes \mathbb
 Z^2 \right)
%\cong 
 % \SL_2(\mathbb R)/\SL_2(\mathbb Z)\times \mathbb R^2/\mathbb Z^2
\]
into $X$. An example of such an embedding is the map $\tau$ which sends 
$(g, \bf v)$  to 
$
\left (
\begin{array}{cc}
g & \bf v \\
0 & 1
\end{array}
\right)
$
where $g\in \SL_2(\mathbb R) \mbox{ and } \bf v\in\mathbb R^2$.
Let $\mu_1$ be the standard probability measure on $Y$ induced by the
haar measure on $\SL_2(\R) \ltimes \R^2$ and let $\mu$ be its image
under the map above. Then $\mu$ is clearly invariant under the group
$F': = \tau(F) $ and also under $U' := \tau (\{ (I_2,
(s,0)^{\mathrm{tr}}): s\in \R\},$ where $I_2$ is the identity in
$\SL_2(\R)$. Then $F'$ normalizes $U'$, $F'U'$ is not abelian, and the
conclusion of Theorem \ref{thm;measure} does not hold, as the
existence of $\mu$ shows.   
%According to Theorem \ref{thm;mozes} any $F'U'$ invariant probability measure on $Y$
%is invariant under $I_2\times \mathbb R^2$. Therefore $\mu_1\times \mu_2$ is ergodic 
%for  the action of the  group $F'U'$. 

In fact there are $F'U'$-invariant ergodic  measures on $X$ which are
not even
homogeneous. Indeed, it is well know that there are uncountably many
$F'$ invariant and ergodic 
 nonhomogeneous 
probability measures on $\SL_2(\mathbb R)/\SL_2(\mathbb Z)$. For each
such measure $\nu$, integrating along the fiber of $Y \to
\SL_2(\R)/\SL_2(\Z)$ constructs a measure $\nu'$ on $Y$ which is not
homogeneous. The image of any such measure under $\tau$ will be a
measure on $X$ which is $F'U'$-invariant and not homogeneous. 

Next we show that 
the theorem is not true for 
$X_4:=\SL_4(\mathbb R)/\SL_4(\mathbb Z)$. We are grateful to Elon
Lindenstrauss for pointing out this example, which
relies on some results of \cite{lw}. 
Let 
\begin{equation}\label{eq: def H'}
H':=\left(
\begin{matrix} * & * & 0 & 0 \\ * & * & 0 & 0 \\ 0& 0& * & * \\ 0& 0& * & *
\end{matrix}
\right) \subset \SL_4(\R).  
\end{equation}
In \cite{lw} it was shown, using number fields of degree 4 containing
 subfields of degree 2, how to find $x\in X_4$ such 
that $H'x$ is closed and admits a finite $H'$-invariant measure
$m'$. Let
\[
F:=\{\mathrm{diag}(e^{3t}, e^t,e^{-t}, e^{-3t})\} \quad \mbox{and}\quad
U: =U_{12}.
\]
Then clearly $F, U$ satisfy the conditions of Theorem
\ref{thm;measure}, and $m'$ is $FU$-invariant but not
$\SL_4(\R)$-invariant. 

%In fact it can be shown that in this example $m'$ is $FU$-ergodic. To
%see this, let $H_0 \cong \SL_2(\R) \times \SL_2(\R)$ be the maximal
%semisimple subgroup of $H$. Then (up to finite covers) $H_0$ is isomorphic to the quotient
%of $H'$ by its center, $H_0x$ is closed and of finite-volume,
%and the natural map $H'x \to H_0x$ induced by the homomorphism $H' \to
%H_0$ maps $m'$ to the $H_0$-invariant measure
%$m_0$ on $H_0x$. 
%In these examples the stabilizer $(H_0)_x$ of $x$ in $H_0$ is an
%irreducible lattice in $H_0$. 
%Thus by the Howe-Moore theorem $U$ acts
%ergodically on $m_0$, and since $F$ normalizes $U$, $U$ acts
%ergodically on each $f_*m_0, f \in F$. Thus for any $FU$-invariant
%measurable function $\varphi \in L^1(X, m')$, for every $f \in F$ the
%restriction $\varphi|_{fH_0x}$ is $f_*m_0$-almost everywhere constant. Since $F$ projects onto
%$H'/H_0$, $\varphi$ is $m'$-a.e. constant. Hence $m'$ is ergodic for the action of $FU$. 

\subsection{Example for Theorem \ref{thm;dirichlet}}
The goal of this subsection is to show that Theorem
\ref{thm;dirichlet} does not extend to $n=3$. That is, we prove:
\begin{thm}\label{thm: example for dirichlet}
There is a line segment  $\mathcal{L} \subset \R^3$ which contains a badly approximable vector,
such that every point in  $\mathcal L$ admits an
improvement in Dirichlet's theorem. 
\end{thm}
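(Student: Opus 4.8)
The plan is to produce an explicit line segment in $\R^3$, passing through a badly approximable point, along which every point lies in $\DI(\sigma)$ for some uniform $\sigma < 1$. The natural candidates are line segments lying inside a rational affine subspace — in $\R^3$, a line contained in a plane of the form $\{c_1 x_1 + c_2 x_2 + c_3 x_3 = c_0\}$ with $c_i \in \Q$. The key point, well known in the theory (see e.g.\ the work of Kleinbock on Diophantine approximation on affine subspaces), is that a rational linear relation among the coordinates forces a "free" improvement in Dirichlet's theorem: if $\bv$ satisfies such a relation, then for a given $q$ one of the coordinates $q v_i$ is essentially determined (modulo an integer) by the others, which shrinks the relevant box of approximation by a constant factor. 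So the first step is to pick rational numbers so that the plane is rational but does not itself consist entirely of $\DI(\sigma_0)$-points for trivial reasons, and then pick the line inside it through a point known to be badly approximable.

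First I would recall the existence of badly approximable vectors on rational affine subspaces. By a result on badly approximable points in affine subspaces (following from Kleinbock's transference/Schmidt-game arguments, or directly from the fact that badly approximable vectors form a winning set and intersect any such subspace in a winning set for the induced game), any rational affine plane $\mathcal P \subset \R^3$ of the appropriate dimension contains badly approximable vectors; in fact it contains them in abundance. Fix one such $\bv_0 \in \mathcal P$ and let $\mathcal L \subset \mathcal P$ be any line segment through $\bv_0$. This gives a line segment containing a badly approximable vector.

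Next I would establish the Dirichlet improvement. Write the rational relation as $q_0 = \sum_i c_i v_i$ with $c_i, q_0 \in \Z$ (after clearing denominators we may take the $c_i$ integral, and I would normalize so $\gcd$ considerations are handled). Given $\bv \in \mathcal P$ and $Q$ large, apply ordinary Dirichlet (or rather the pigeonhole in the form used for the $\DI$ definition) to the two "free" coordinates to get $q \in \N$, $q \le Q$, and $p_1, p_2 \in \Z$ with $\|q v_i - p_i\|$ small for $i = 1, 2$; then the third coordinate satisfies $q v_3 = \frac{1}{c_3}(q q_0 - c_1 q v_1 - c_2 q v_2)$, and choosing $p_3$ to be the nearest integer to $q v_3$, the quantity $|q v_3 - p_3|$ is controlled by $|c_1| \|q v_1 - p_1\| + |c_2| \|q v_2 - p_2\|$ divided by $|c_3|$ — but more to the point, using the lattice picture one sees that a box of the form $[-\delta, \delta]^3 \times [1, Q]$ in the space of $(q\bv - \bp, q)$ contains a nonzero lattice point with $\delta$ a definite constant multiple smaller than the generic $Q^{-1/3}$, because the lattice of $(q\bv - \bp, q)$ lies in the codimension-one rational subspace $c_1 w_1 + c_2 w_2 + c_3 w_3 = q_0 w_4 / (\text{scaling})$... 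I would instead phrase this cleanly via the homogeneous dynamics correspondence: the orbit $\{f_t \bar u(\bv) : t \ge 0\}$ is, for $\bv \in \mathcal P$, confined to a proper closed subset of $X$ (the points whose associated lattice contains a short vector in a fixed rational direction), hence stays out of a fixed open set, and by \cite[Prop.\ 2.1]{KW_di} (the criterion already invoked in the proof of Proposition \ref{prop;dynamical2}) this is exactly equivalent to $\bv \in \DI(\sigma)$ for a uniform $\sigma$. That confinement is immediate: the rational vector $(c_1, c_2, c_3, -q_0)$ (suitably interpreted) gives a nonzero vector of bounded length in $\bar u(\bv) \Z^3$ shrunk appropriately, uniformly over $\bv \in \mathcal P$ and $t \ge 0$.

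The main obstacle I expect is making the uniformity of $\sigma$ genuinely uniform over the whole segment (and not just pointwise), and correctly bookkeeping the weights/normalization in the $\DI$ definition — in particular checking that "$q < \sigma Q$" as well as "$\|q\bv - \bp\| < \sigma Q^{-1/3}$" can be achieved simultaneously, which is why the dynamical reformulation (escape from a fixed open set $\Rightarrow$ membership in $\DI(\sigma)$) is the cleanest route, since it packages both inequalities at once. A secondary point needing care is confirming that $\mathcal L$ can be chosen through an honestly badly approximable point rather than, say, a point all of whose coordinates are rational; this is handled by the winning-set argument in the first step, which guarantees badly approximable points on $\mathcal P$ that are far from the rational points. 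With these in hand, the theorem follows by taking $\mathcal L$ to be any segment of positive length inside $\mathcal P$ containing the chosen badly approximable vector.
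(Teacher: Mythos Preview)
Your approach has a fatal gap: a rational affine plane $\mathcal{P}\subset\R^3$ contains \emph{no} badly approximable vectors, so the first step fails. To see this, note that if $c_1v_1+c_2v_2+c_3v_3=c_0$ with $c_i\in\Z$, then the dual lattice $(f_t u(\bv))^{-T}\Z^4 = f_t^{-1}u(\bv)^{-T}\Z^4$ contains the vector $f_t^{-1}(c_1,c_2,c_3,0)^{\mathrm{tr}} = (e^{-t/3}c_1,e^{-t/3}c_2,e^{-t/3}c_3,0)^{\mathrm{tr}}$, which tends to $0$ as $t\to\infty$. Since $\Lambda\mapsto\Lambda^*$ is a self-homeomorphism of $X_4$, this forces $f_t\bar u(\bv)\to\infty$, i.e.\ the forward trajectory is divergent; by Dani's correspondence $\bv$ is not badly approximable. (More elementarily: after solving for one coordinate, two-dimensional Dirichlet gives infinitely many $q$ with $\max_i\|qv_i\|\ll q^{-1/2}$, and $q^{-1/2}=o(q^{-1/3})$.) In fact the very mechanism you invoke to put every point of $\mathcal{P}$ into $\DI(\sigma)$ --- a short rational vector forcing the orbit away from $K_1$ --- is exactly what drives the orbit to the cusp and kills bad approximability. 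The two requirements of the theorem are in tension, and the rational-subspace trick cannot satisfy both.

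The paper's construction is of a completely different nature and is genuinely delicate. One takes $g\in\SL_4(\R)$ so that $H'\pi(g)$ is a closed finite-volume orbit for the block-diagonal group $H'\cong\SL_2(\R)\times\SL_2(\R)$ (such orbits arise from degree-$4$ number fields with a quadratic subfield) and so that $\{f_t\pi(g):t\ge0\}$ is bounded; writing $g=p\,u(\bv_0)$ gives a badly approximable $\bv_0$. One then moves along a one-parameter unipotent $u_s\in H'$ and solves $u_sp=p(s)^{-1}\til u(s)$ with $p(s)\in P$, $\til u(s)\in U$, producing a line segment $\mathcal{L}=\bv_0+\{\tau(s)\bw\}$. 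The point is that $f_{t}\til u(s)\bar u(\bv_0)$ accumulates only on $q(s)H'\pi(g)$, where $q(s)=\lim_{t\to\infty}f_tp(s)f_{-t}$, and a transversality computation using Haj\'os's description of $K_1$ as a finite union of compact $U^+_\sigma$-orbits shows that, for a generic choice of $g$ and all small $s\ne0$, the set $q(s)H'\pi(g)$ is disjoint from $K_1$. Thus no accumulation point of $f_t\bar u(\ell(s))$ lies in $K_1$, and Proposition~\ref{prop: hajos di} yields $\ell(s)\in\DI(\sigma)$; for $s=0$ one uses Davenport--Schmidt. The line $\mathcal{L}$ so produced does \emph{not} lie in any rational hyperplane.
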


The proof is an elaboration on the construction in \S
\ref{subsec: elon}, and also uses a result of Haj\'os, which we now
state. For a permutation $\sigma $ of $\{1, \ldots, n\}$, let
$U_{\sigma}^+$ denote the group generated by $\{U_{\sigma(i)
  \sigma(j)} : i<j\}$; that is the conjugate of the upper triangular
group by the permutation matrix corresponding to $\sigma$. 

\begin{thm}[Haj\'os]\label{thm: hajos} 
Let $X_n$ be the space of unimodular lattices in $\R^n$ and let
$\Lambda \in X$ such that $\Lambda$ contains no nonzero points in the
interior of the unit cube. Then there is $\sigma$ such that $\Lambda
\in U^+_\sigma \Z^n$.
\end{thm}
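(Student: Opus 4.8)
\textbf{Proof plan for Haj\'os's theorem (Theorem \ref{thm: hajos}).}

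The plan is to reduce the geometric statement about lattices avoiding the open unit cube to the classical algebraic theorem of Haj\'os on the Minkowski conjecture for cube tilings, and then to translate the algebraic conclusion back into the language of the groups $U^+_\sigma$. Recall that Haj\'os's 1941 theorem states: if a family of unit cubes $\{x + [0,1)^n : x \in \Lambda\}$, indexed by a lattice $\Lambda \subset \R^n$ of covolume $1$, tiles $\R^n$, then $\Lambda$ contains two cubes sharing a complete $(n-1)$-dimensional face; equivalently, $\Lambda$ contains one of the standard basis vectors $\EE_i$. The first step is to observe that the hypothesis ``$\Lambda$ contains no nonzero point in the interior of the unit cube $(-1,1)^n$'' — here I use the cube centered at the origin, which matches the normalization used elsewhere in the paper — is exactly equivalent to saying that the half-open cubes $x + [0,1)^n$, $x \in \Lambda$, have pairwise disjoint interiors; since $\Lambda$ has covolume $1$, a disjointness-plus-density argument (or Minkowski's theorem on the critical determinant of the cube) shows this packing is in fact a tiling. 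Thus Haj\'os's algebraic theorem applies to $\Lambda$.

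Next I would argue by induction on $n$. The base case $n=1$ is trivial: a covolume-$1$ lattice in $\R$ is $\Z$, and $U^+_\sigma$ is trivial, so $\Lambda = \Z\EE_1$. For the inductive step, by Haj\'os's theorem some standard basis vector, say $\EE_k$ after relabeling, lies in $\Lambda$. Consider the projection $p : \R^n \to \R^n / \R\EE_k \cong \R^{n-1}$; the image $p(\Lambda)$ is a lattice of covolume $1$ in $\R^{n-1}$ (because $\R\EE_k \cap \Lambda = \Z\EE_k$), and it still contains no nonzero point in the open unit cube of $\R^{n-1}$ — any such point would lift to a point of $\Lambda$ in the open unit cylinder over that cube, and by subtracting a suitable integer multiple of $\EE_k$ we could bring it into the open unit cube of $\R^n$, contradicting the hypothesis. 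By the inductive hypothesis there is a permutation $\tau$ of $\{1,\dots,n\}\smallsetminus\{k\}$ with $p(\Lambda) \in U^+_\tau \Z^{n-1}$. Choosing an ordered lift of the resulting basis of $p(\Lambda)$ to $\Lambda$, and appending $\EE_k$ with $k$ placed last in the order, produces an ordered basis of $\Lambda$ realizing $\Lambda \in U^+_\sigma \Z^n$ for the permutation $\sigma$ of $\{1,\dots,n\}$ extending $\tau$ by $\sigma(n)=k$; the strictly upper-triangular unipotent shape in the $\sigma$-ordering is exactly what the lifting and the append-$\EE_k$-last step guarantee.

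I expect the main obstacle to be purely bookkeeping: making the passage between the geometric formulation (lattice avoiding an open cube) and the tiling hypothesis of the classical theorem fully rigorous, and then tracking the permutation carefully through the projection/lift so that the unipotent $\langle U_{\sigma(i)\sigma(j)} : i<j\rangle$ genuinely describes the basis obtained. Care is needed with the choice of which cube (origin-centered versus corner-at-origin) is avoided, since these conventions differ by a translation and the statement is about the \emph{open} cube; I would fix conventions at the outset and note that the statement is invariant under $\Lambda \mapsto g\Lambda$ for $g$ a signed permutation matrix, which absorbs the relabeling freedom. No genuinely new dynamics or number theory is needed beyond citing Haj\'os's theorem itself; the content here is the reduction and the inductive translation into the $U^+_\sigma$ normal form.
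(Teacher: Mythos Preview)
The paper does not prove this statement at all: it is quoted as a classical theorem of Haj\'os and used as a black box in \S\ref{sec: examples}. Your plan---reduce ``no nonzero lattice point in the open cube $(-1,1)^n$ and covolume $1$'' to ``the $\Lambda$-translates of $[0,1)^n$ tile $\R^n$'', invoke the 1941 Haj\'os theorem to find some $\EE_k\in\Lambda$, then project out $\EE_k$ and induct---is the standard derivation of the $U^+_\sigma$ normal form from the cube-tiling statement, and it is correct.

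One bookkeeping slip worth flagging (you anticipated this was the delicate part): with the paper's convention $U^+_\sigma=\langle U_{\sigma(i)\sigma(j)}:i<j\rangle$, an element $g\in U^+_\sigma$ has $g\EE_{\sigma(1)}=\EE_{\sigma(1)}$ (the $\sigma(1)$-column is the standard basis vector), while the $\sigma(n)$-column is the ``free'' one. So the vector $\EE_k$ you locate should be placed \emph{first} in the ordering, i.e.\ $\sigma(1)=k$ and $\sigma(j+1)=\tau(j)$, not $\sigma(n)=k$. With that correction your lifted basis $b'_1=\EE_k$, $b'_{j+1}=$ (lift of $\bar b_j$) indeed satisfies $b'_m\in\EE_{\sigma(m)}+\spa(\EE_{\sigma(1)},\dots,\EE_{\sigma(m-1)})$, which is exactly the shape of the columns of a matrix in $U^+_\sigma$.
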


Note that each of the orbits $U^+_\sigma \Z^n$ is compact; thus,
recalling that $\| \cdot \|$ denotes the sup-norm, if we set  
$$K_{\vre} := \{\Lambda \in X_n : \forall v \in \Lambda \sm \{0\}, \| v
\| \geq \vre\}$$
then Theorem \ref{thm: hajos} says that $K_1$ is a finite union of compact
orbits of the groups $U^+_{\sigma}$. %These orbits have dimension $\dim
                                %U_\sigma^+ = n(n-1)/2$.  

We will also need \cite[Prop. 2.1]{KW_di}. We extend the notation
\eqref{eq;notation} and (\ref{eq;u}) to arbitrary dimension $n \geq 2$ in the obvious way. 
\begin{prop}\label{prop: hajos di}
The vector $\bv \in \R^n$ admits no improvement in Dirichlet's theorem
if and only if there is $t_n \to \infty$ such that
$\lim_{n \to \infty} f_{t_n}\bar{u}(\bv)$ exists and belongs to $K_1$. 
\end{prop}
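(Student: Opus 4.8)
\textbf{Proof proposal for Proposition \ref{prop: hajos di}.}
The plan is to unwind the Dani-type correspondence between Diophantine properties of $\bv$ and the behavior of the orbit $\{f_t \bar u(\bv): t \geq 0\}$ in $X = X_n$, together with Haj\'os' theorem in the guise that $K_1$ is a finite union of compact $U^+_\sigma$-orbits (the remark following Theorem \ref{thm: hajos}). Recall the standard dictionary: for $q \in \N$ and $\bp \in \Z^n$, the lattice $f_t u(\bv)\Z^{n+1}$ (where $u(\bv) = u(v_1,\dots,v_n)$ and $f_t = \diag(e^{t/n},\dots,e^{t/n},e^{-t})$ in dimension $n+1$, as in the extension of \eqref{eq;notation}) contains the vector $f_t u(\bv)(\bp, -q)^{\mathrm{tr}} = (e^{t/n}(q\bv-\bp)^{\mathrm{tr}}, -e^{-t}q)^{\mathrm{tr}}$ up to sign, so that a solution of $\|q\bv-\bp\| < \sigma Q^{-1/n}$, $q < \sigma Q$ with $Q = e^t$ corresponds exactly to a nonzero lattice point of $f_t u(\bv)\Z^{n+1}$ of sup-norm less than $\sigma^{1/2} \cdot$(a normalizing constant). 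Thus $\bv \in \DI(\sigma)$ for some $\sigma < 1$ if and only if there exists $\vre < 1$ and $t_0$ such that $f_t \bar u(\bv) \notin K_\vre$ for all $t \geq t_0$; equivalently, $\bv$ admits no improvement in Dirichlet's theorem if and only if for every $\vre < 1$ the set $\{t \geq 0 : f_t \bar u(\bv) \in K_\vre\}$ is unbounded. This is precisely the content of \cite[Prop. 2.1]{KW_di}, which we are free to quote; I would state it in exactly this form.

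Next I would prove the equivalence with the condition on limit points. Suppose first there is $t_n \to \infty$ with $f_{t_n}\bar u(\bv) \to \Lambda \in K_1$. Since $K_1$ is closed (it is a finite union of compact orbits), for each $\vre < 1$ the neighborhood $K_\vre \supset K_1$ eventually contains $f_{t_n}\bar u(\bv)$ for all large $n$, so $\{t : f_t \bar u(\bv) \in K_\vre\}$ is unbounded and by the previous paragraph $\bv$ admits no improvement. Conversely, suppose $\bv$ admits no improvement. Then for each $k \in \N$ the set $\{t : f_t \bar u(\bv) \in K_{1-1/k}\}$ is unbounded, so we may pick $t_k \to \infty$ with $f_{t_k}\bar u(\bv) \in K_{1-1/k}$. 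The issue is that $K_{1-1/k}$ is not compact, so this sequence need not converge; however, one can use the mixing/recurrence structure, or more simply a height-function argument, to show the points $f_{t_k}\bar u(\bv)$ do not escape to infinity. Indeed, having sup-norm systole bounded below by $1-1/k$ does not by itself confine a lattice to a compact set, so here is where the real work lies.

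The main obstacle is therefore precisely this compactness issue, and the way to resolve it is to invoke Haj\'os' theorem more carefully. Replace $K_{1-1/k}$ by the \emph{open} set $\{\Lambda : \forall v \in \Lambda \sm\{0\},\ \|v\| > 1 - 1/k\}$ does not help with compactness either; instead, the correct route is: since $\bv$ admits no improvement, for every $\sigma<1$ and every $T$ there is $t \geq T$ with $f_t\bar u(\bv)$ having \emph{no} nonzero vector of sup-norm $< \sigma$ (with the normalizing constant absorbed). Choose $\sigma_k \nearrow 1$ and corresponding $t_k \nearrow \infty$. Now the lattice $\Lambda_k := f_{t_k}\bar u(\bv)$ has first minimum $\geq \sigma_k$ in the sup-norm; by Minkowski's second theorem, the product of successive minima of a unimodular lattice is bounded above and below by dimensional constants, hence all successive minima of $\Lambda_k$ lie in $[\sigma_k, C_n]$ for a constant $C_n$ depending only on $n$. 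This gives a two-sided bound on the shape, which by Mahler's compactness criterion confines $\{\Lambda_k\}$ to a compact subset of $X_n$; pass to a convergent subsequence $\Lambda_{k_j} \to \Lambda$. Then $\Lambda$ is unimodular with sup-norm first minimum $\geq \lim \sigma_{k_j} = 1$, i.e. $\Lambda$ has no nonzero point in the open unit cube, so by Theorem \ref{thm: hajos} $\Lambda \in \bigcup_\sigma U^+_\sigma \Z^n = K_1$. With $t_{k_j}\to\infty$ this produces the desired sequence. Finally I would remark that the two directions together give the stated if-and-only-if, completing the proof; the only nonformal input beyond \cite[Prop. 2.1]{KW_di} and Theorem \ref{thm: hajos} is the elementary Minkowski-second-theorem bound ensuring non-divergence.
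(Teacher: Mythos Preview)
The paper does not actually supply a proof of this proposition: it simply records it as the content of \cite[Prop.~2.1]{KW_di} and moves on. Your write-up is essentially a correct elaboration of what that citation encodes, and the overall logical structure (Dani--Kleinbock--Weiss dictionary in one direction, extract a limit point in $K_1$ in the other) is exactly right.

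There is, however, one genuine misstatement that leads you on an unnecessary detour. You assert that ``$K_{1-1/k}$ is not compact'' and then spend the last paragraph repairing this via Minkowski's second theorem. In fact each $K_\vre$ with $\vre>0$ \emph{is} compact: by Mahler's compactness criterion a set of unimodular lattices is relatively compact precisely when the length of the shortest nonzero vector is uniformly bounded below, and $K_\vre$ imposes exactly such a bound (and is visibly closed). So once you have chosen $t_k\to\infty$ with $f_{t_k}\bar u(\bv)\in K_{1-1/k}\subset K_{1/2}$ for $k\ge 2$, you are already inside the fixed compact set $K_{1/2}$ and can pass to a convergent subsequence immediately; continuity of the first minimum then gives $\lambda_1(\Lambda)\ge 1$, i.e.\ $\Lambda\in K_1$. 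Your Minkowski-second-theorem bound is correct but redundant. Likewise, the appeal to Haj\'os' theorem in your final sentence is not needed for this proposition: membership in $K_1$ is by definition the condition $\lambda_1\ge 1$, and Haj\'os only enters later, in the proof of Theorem~\ref{thm: example for dirichlet}, where the description of $K_1$ as a finite union of $U^+_\sigma$-orbits is actually used. A minor slip: the sup-norm bound corresponding to $\|q\bv-\bp\|<\sigma Q^{-1/n}$, $q<\sigma Q$ at $Q=e^t$ is $\sigma$, not $\sigma^{1/2}$.
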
 

Let $G=\SL_4(\R),\,  X=X_4$, $H=H'$ as in \eqref{eq: def H'}
and $\pi: G\to X$ be the natural quotient map. In \cite{lw} it was shown that
there are $x \in X$ for which $Hx$ is a closed orbit of finite
volume. We will need the following well-known strengthening: 
\begin{prop}\label{prop: dense bounded}
There is a dense set of $x \in X$ such that $Hx$ is closed of finite
volume, and $\{f_tx: t \geq 0\}$ is bounded. 
\end{prop}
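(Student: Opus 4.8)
The plan is to combine the closed orbit construction of \cite{lw} with a Baire category / genericity argument to upgrade the mere existence of a closed finite-volume orbit $Hx$ to the existence of a dense set of such $x$ with bounded forward $F$-trajectory.

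\medskip

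\textbf{Step 1: From one closed orbit to a dense set of closed orbits.}
Fix $x_0 \in X$ with $Hx_0$ closed of finite volume, as provided by \cite{lw}. Since $H$ contains the one-parameter unipotent subgroups $U_{12}, U_{21}, U_{34}, U_{43}$, and $Hx_0$ carries a finite $H$-invariant measure, the $H$-action on $Hx_0$ is generated by unipotents and by Ratner's theorem (already invoked in the excerpt) the $H$-orbit of $m'$-almost every point is all of $Hx_0$; in particular closed $H$-orbits of finite volume exist through a dense subset of $Hx_0$ itself. More usefully, the set $\mathcal{C}$ of points $x$ for which $Hx$ is closed of finite volume is invariant under the normalizer $N_G(H)$, and by the structure of $H$ (a product of two $\SL_2$ blocks) together with rationality considerations one checks that $N_G(H)$ together with $\Gamma$ acts on $\mathcal{C}$ with dense orbits — equivalently, $\mathcal{C}$ is dense in $X$. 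Alternatively, one may simply quote the fact (standard, and used implicitly in \cite{lw}) that the collection of closed $H$-orbits of finite volume is dense in $X$; this is the real content of ``$H'x$ closed'' being a robust phenomenon rather than an isolated one, and follows from the fact that $H$ is a $\Q$-subgroup and closed orbits correspond to $\Gamma$-conjugates of $H\cap\Gamma$ being a lattice, of which there are plenty arising from commensurability classes of quaternion orders or from the number-field construction of \cite{lw} applied to a dense set of field embeddings.

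\medskip

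\textbf{Step 2: Bounded forward $F$-trajectory on a closed $H$-orbit.}
Now fix any closed finite-volume orbit $Hx$. Note $F = \{\diag(e^{3t},e^t,e^{-t},e^{-3t})\}$ normalizes $H$ and preserves the Haar measure $m'$ on $Hx$ (since it permutes the two $\SL_2$-blocks' roots and scales complementary root spaces reciprocally), so $F$ acts on $(Hx, m')$ in a measure-preserving way. By the Poincar\'e recurrence theorem, $m'$-almost every $y \in Hx$ is forward-recurrent for $F$; but recurrence is weaker than boundedness, so instead I would argue as follows. The orbit $Hx$ is a homogeneous space of finite volume on which the reductive group $H$ acts, and $F$ sits inside $H'' := HF$, which is again reductive with $\Q$-structure; the $F$-action on $Hx$ is the restriction of the action of a Cartan-type subgroup, and on a finite-volume homogeneous space of a reductive group the set of points with bounded forward orbit under a given one-parameter diagonalizable subgroup is nonempty — indeed dense — by a Dani–Margulis type nondivergence plus a standard argument (or directly: it contains every point whose orbit closure is compact, and compact $F$-orbit closures inside $Hx$ are dense because periodic points for the geodesic-type flow are dense). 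Concretely, within each $\SL_2(\R)$-block the diagonal flow has dense periodic orbits on $\SL_2(\R)/\mathrm{lattice}$, and $F$ restricted to $Hx$ is (up to the lattice) a diagonal flow in the product of the two blocks, so its periodic — hence bounded — orbits are dense in $Hx$.

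\medskip

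\textbf{Step 3: Combine.}
Given any open $V \subset X$, Step 1 produces a closed finite-volume $Hx$ meeting $V$; since $Hx$ is a homogeneous subspace and $V \cap Hx$ is a nonempty open subset of it, Step 2 produces a point $x' \in V \cap Hx$ with $\{f_t x' : t \ge 0\}$ bounded, and $Hx' = Hx$ is closed of finite volume. This gives the claimed dense set. The main obstacle is Step 2: passing from Poincar\'e recurrence (which is immediate) to genuine boundedness of the forward trajectory requires knowing that bounded $F$-orbits are dense in the closed orbit $Hx$, which one gets either from density of periodic orbits of the diagonal flow on each $\SL_2$-factor of $H$ or by citing the general nondivergence results already referenced in the paper (e.g. \cite{dm93, km98}); I would flag this as the step needing the most care, and would present it via the concrete $\SL_2\times\SL_2$ periodic-orbit description to keep it self-contained.
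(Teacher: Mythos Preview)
Your argument has a concrete gap in Step~2, stemming from a misidentification of the flow. The one-parameter group $\{\diag(e^{3t},e^t,e^{-t},e^{-3t})\}$ you use is the one appearing in \S\ref{subsec: elon}, but it is \emph{not} the $\{f_t\}$ relevant to Proposition~\ref{prop: dense bounded}. That proposition sits inside the proof of Theorem~\ref{thm: example for dirichlet}, where (after extending the notation \eqref{eq;notation} to $n=3$) one has $f_t=\diag(e^{t/3},e^{t/3},e^{t/3},e^{-t})$; indeed this is exactly what forces the parabolic $P$ in the subsequent argument to take the form it does via \eqref{eq: prop of P}. With the correct $f_t$, the restriction to the upper $2\times 2$ block of $H$ is the scalar $e^{t/3}I_2$, hence projects to the identity in the upper $\mathrm{PSL}_2$-factor. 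There is therefore no ``geodesic flow in each $\SL_2$-block'' whose periodic orbits you can harvest, and your proposed self-contained argument for density of bounded $F$-orbits inside a given closed $H$-orbit collapses. (Your Step~1 is also only sketched; several heuristics are offered but none is carried through.)

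The paper's route is both different and shorter. Rather than separately arranging density of closed $H$-orbits and then density of bounded $F$-orbits within each, it exploits that the number-field construction in \cite{lw} already produces a single point $x_0$ with $Hx_0$ closed \emph{and} $Ax_0$ compact for the \emph{full} diagonal group $A$; in particular $\{f_tx_0:t\ge 0\}$ is bounded for every one-parameter $\{f_t\}\subset A$, with no analysis of the $F$-dynamics on $Hx_0$ required. Density then comes from a commensurator trick: writing $x_0=\pi(g_0)$ and taking $x=\pi(g_0g)$ for $g\in G(\Q)$, one uses that $g\Gamma g^{-1}$ and $\Gamma$ share a finite-index subgroup $\Gamma_0$, so the proper $G$-equivariant maps $G/\Gamma_0\to G/\Gamma$ and $G/\Gamma_0\to G/g\Gamma g^{-1}$ transport both the closedness of $Hx_0$ and the boundedness of $\{f_tx_0:t\ge 0\}$ to $x$. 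Since $G(\Q)$ is dense in $G$, this yields the dense set at once.
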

\begin{proof}
As shown in \cite{lw}, there are $x_0 \in X$ for which $Hx_0$ is closed
and $Ax_0$ is compact, where $A$ is the group of diagonal matrices in
$G$. Thus $x_0$ clearly satisfies the required conclusions. Now write
$x_0 = \pi(g_0)$ and let $g \in G(\Q), x:= \pi(g_0g)$. The set of such
$x$ is dense since $G(\Q)$ is dense in $G$, and we claim that
$x$ also satisfies the required conclusions; equivalently, if we
set $\Gamma = \SL_4(\Z), \, \Gamma' := g\Gamma g^{-1}$, that
$Hg_0\Gamma'$ and $\{f_tg_0\Gamma': t\geq 0\}$ are bounded in
$G/\Gamma'$. 
Since $g$ is in the
commensurator of $\Gamma$, there is a finite-index
subgroup $\Gamma_0 $ of $\Gamma$ such that the maps $\tau_1: G/\Gamma_0 \to
G/\Gamma$, $\tau_2: G/\Gamma_0 \to G/\Gamma'$ are
$G$-equivariant and proper. Since $x \in
\tau_2(\tau_1^{-1}(x_0))$, the conclusion follows. 
\end{proof} 

\begin{proof}[Proof of Theorem \ref{thm: example for dirichlet}]
Let 
\[
P : = \left(\begin{matrix} 
* & * & * & 0 \\
* & * & * & 0 \\
* & * & * & 0 \\
* & * & * & *
\end{matrix} \right) \subset G. 
\]
Then 
\begin{equation}\label{eq: prop of P}
P = \big\{p \in G: \{f_t p f_{-t}: t \geq 0 \} \text{ is bounded in }
G \big \}. 
\end{equation}
This implies that if $p \in P$ and $x \in X$ then for $t \geq 0$, the
distance between  $f_t px$ and $f_tx $ is bounded (independently of
$t$). 
Also let 
\[
Q : = \left(\begin{matrix} 
* & * & * & 0 \\
* & * & * & 0 \\
* & * & * & 0 \\
0 & 0 & 0 & *
\end{matrix} \right) \cong \GL_3(\R) \subset G. 
\]
There is a projection $q: P \to Q$ obtained by identifying $Q$ with
the quotient of $P$ by its unipotent radical, or more concretely, by
replacing the $(41), (42), (43)$ matrix entries by 0. A simple
calculation in matrix conjugation shows that for all $p \in P$, 
\begin{equation}\label{eq: prop of P0}
q(p)  = \lim_{t \to +\infty} f_t p f_{-t}. 
\end{equation}
%Similarly we let $\bar{q}: P \to \GL_3(\R)$ be the homomorphism which
%maps $(p_{ij})_{i,j \leq 4}$ to the top right $3 \times 3$ block
%$(p_{ij})_{i,j \leq 3}$. 
Let 
$$U =
\{u(\bv) : \bv \in \R^3\}  = \langle U_{14}, U_{24},
U_{34} \rangle \cong \R^3.$$
Then the set $PU$ is open and dense in
$G$. Let 
$$\mathcal{D} : = \{ g \in PU: H\pi(g) \text{  is closed, } 
\{f_t \pi(g) : t
\geq 0\}
\text{ bounded} \}.
$$
According to Proposition \ref{prop: dense bounded}, $\mathcal{D}$  is
dense in $PU$. 
Let 
$$
g = p{u}(\bv_0)\in PU
$$
for some $\bv_0 \in \R^3$ and $p \in P$.
If $g \in \mathcal{D} $ then \eqref{eq: prop of P} implies that 
$\{f_t \pi(g) : t \geq 0\}$ and $\{f_t \pi(u(\bv_0)): t \geq 0\}$ are both bounded and hence $\bv_0$ is badly
approximable. 
Now define $u_s = \exp(sE_{34}) \in H \cap U$ and consider the formula
\begin{equation}\label{eq: to be solved}
u_sp = p(s)^{-1} \til u (s).
\end{equation}
Note that $ p(s), \til u(s)$ depend on $p$ and hence on $g$ but we omit this
dependence to simplify notation.

We will show that there is $g \in \mathcal{D}$, and an open interval $I$ containing 0 such that: 
\begin{itemize}
\item[(i)] 
For all $s \in I$,
\eqref{eq: to be solved} has unique solutions $p(s) \in P$, $\til u(s) \in
U$. 
\item[(ii)] 
There is $\bw \in \R^3 \sm \{0\}$ such that $\til u(s) =
u(\tau(s) \bw)$, where $\tau(s)$ is a non-constant rational function
of $s$; that is $\mathcal{L}_0 = \{u^{-1}
\circ \til u(s) : s \in I\}$ is a
smooth parameterization of a line segment in $\R^3$. 
\item[(iii)]
For any 
%$\bar{s} \in I$ there is an interval $J \subset I$ containing
%$\bar{s}$ such that for all 
$s \in I \sm \{0\}$, $K_1 \cap q(s)Hx =
\varnothing,$ where 
$$
q(s) := q(p(s)). 
$$ 
\item[(iv)]
For any $s \in I$ such that $K_1 \cap q(s)Hx =
\varnothing$, there is no $t_n \to \infty$ for which the sequence
$\left(f_{t_n}  \til u(s) \bar u(\mathbf v_0)\right)_{ n \in \N}$ converges to an element of $K_1$. 
\end{itemize} 

First we explain why the theorem follows from
(i--iv). Consider 
$$\mathcal{L} :  = \bv_0+\mathcal{L}_0 = \{\ell(s) : s \in 
I\}, \ \text{ where } \ell(s) := \bv_0 +
\tau(s) \bw.$$ 
 According to (i), (ii) this is a nontrivial line segment in
$\R^3$, and we need to show that $\ell(s)$ admits an improvement in
Dirichlet's theorem for every $s \in I$. For $s=0$, this follows from
the fact that $\ell(0) = \bv_0$ is badly approximable using \cite{ds}. 
By (iii), for all $s \in I \sm \{0\}$ we have $K_1 \cap q(s)Hx = \varnothing.$
Then, according to (iv), for such points we have
$$\bar{u}(\ell(s)) = u(\tau(s)\bw)\bar{u}(\bv_0) = \til u(s) \bar u({\mathbf v_0}),$$ 
and so according to Proposition
\ref{prop: hajos di}, $\ell(s)$ admits an improvement in 
Dirichlet's theorem. 

We turn to the proof of (i--iv). 
In view of Proposition \ref{prop: dense bounded}
it suffices to show that the exists a nonempty open subset of $PU$ such
that any element $g$ in the intersection of $\mathcal D$ and this open subset 
satisfies (i--iv) for some interval $I$.
\begin{comment}
Assertion (i) can be proved using the
implicit function theorem, by computing that the derivatives of the
two maps 
$$(u_0,p_0) \mapsto p_0u_0, \ \ (u_0,p_0) \mapsto u_0p_0$$ 
which are nondegenerate at
$(e, e)$, and thus define local homeomorphisms
$ U \times P \to G$ for a neighborhood of $(e,e)$. However we will give a more explicit proof as we
will require 
need more information about the solutions. 
\end{comment}

Let $p_{ij}$ denote  the
matrix entries of $p$. Then we have
\begin{align*}\label{eq: 1}
u_s p=
\left( \begin{matrix} p_{11} & p_{12} & p_{13} &0\\
p_{21}& p_{22}& p_{23}  & 0\\
p_{31}+ s p_{41} & p_{32}+sp_{42} & p_{33}+sp_{43} & sp_{44}\\
p_{41} & p_{42}& p_{43}& p_{44}
\end{matrix} \right)  .
\end{align*}
The top left $3 \times 3$ block of a product $p(s)^{-1} \til
u(s)$ is the same as that of $p(s)^{-1}$.
It follows that 
\begin{align*}
q(s)=
\left(
\begin{matrix}
a(s) & 0 \\
0 & b(s)
\end{matrix}
\right)
\quad
\mbox{with }
a(s)=b(s)\left(
\begin{matrix}
a_{11}(s) & a_{12}(s)  & a_{13}\\
a_{21}(s) & a_{22}(s) & a_{23}  \\
a_{31}(s) & a_{32}(s) & a_{33} \\
\end{matrix}
\right)
\end{align*}
where $b(s)^{-1}$ is the determinant of the top left $3\times 3$
matrix of $u_sp$, 
$a_{i1}(s), a_{i2}(s)$ are affine functions of $s$ and $a_{i3}$ are constants.
Also 
\[
\tilde u(s)=a (0, 0 , sp_{44})^{\mathrm{tr}}=sb(s)p_{44}(a_{13}, a_{23}, a_{33})^{\mathrm{tr}}.
\]
It follows that for any element  of $PU$ 
there exists an interval $I$ of $\mathbb R$ such that  (\rmnum{1}) and
(\rmnum{2}) hold.

For any $\sigma$ let $\mathfrak{u}^+_\sigma$ denote the Lie
algebra of $U^+_\sigma$ and let $\mathfrak{h}$ denote the Lie algebra
of $H$.
We claim  that the set $\mathcal S$ of elements $g\in PU$ such that 
\begin{equation}\label{eq: 4}
\text{  for any } \sigma, \, q'(0) q(0)^{-1} \notin \mathfrak{u}^+_\sigma +
\Ad(q(0)) (\mathfrak{h})
\end{equation}
is a nonempty open subset. 
Assume the claim, then there exists $g\in \mathcal D$ such that (\ref{eq: 4}) holds. 
Recall that 
$$K_1 = \bigcup_{\sigma} U^+_{\sigma} \Z^n,$$
that is a finite union of compact 6-dimensional manifolds, each of
which is a $U_\sigma^+$-orbit. Also the orbit $H\pi(g)$
is a 7-dimensional manifold, and $q(s)H\pi(g)$ is thus a closed 
$q(s)Hq(s)^{-1}$-orbit. 
If $q(0)H\pi(g)$ intersects $K_1$ at a point $x$, then
\eqref{eq: 4} implies
that the application of $q(s)$ for small nonzero $s$ maps a neighborhood of $x$ in 
$q(0)H\pi(g)$ away from 
$K_1$. Since $K_1$ is compact,
$q(s)H\pi(g)$ and $K_1$ are disjoint, and (iii) follows. 
By \eqref{eq: to be solved}, $\til u(s) \bar{u}(\mathbf {v}_0) = \til u(s)
p^{-1}\pi(g) = p(s) u_s \pi(g) $. If $t_n \to \infty$ and the sequence
$\left(f_{t_n}p(s)
u_s \pi(g) \right)_{n \geq 1}$ converges, then by \eqref{eq: prop of P0}, 
\begin{align*}
\lim_{n \to \infty} f_{t_n} p(s)  u_s \pi(g) &=  \lim_{n \to \infty}
f_{t_n} p(s)  f_{-t_n} f_{t_n} u_s\pi(g)   \\
&= \lim_{n \to \infty} q(s)
f_{t_n} u_s
x \in q(s) H\pi(g).
\end{align*}
 Thus (iv) follows from (iii). 

It remains to prove the claim. It is easy to see that the set $\mathcal S$
is open. So we only need to show that it is nonempty. 
We will show that there exists 
$g\in \mathcal S$ such that $p$ is equal to 
$$
\left(\begin{matrix} 1 & 0 & 1 & 0 \\
0 & 1 & 1 & 0 \\
0 & 0 & 1 & 0 \\
x & y & z & 1
\end{matrix}
\right),
$$
for an appropriate choice of $x, y, z$. 
Expressing $q(s)^{-1}$ using \eqref{eq: to be solved}, and taking the derivative with respect to $s$ in the equation
\[
q(s)q(s)^{-1}=e, 
\]
yields
\begin{equation}\label{eq: 5}
q'(0)q(0)^{-1}=
\left(
\begin{matrix}
x & y & z  & 0 \\
x & y & z & 0 \\
-x & -y& -z & 0\\
0 & 0 & 0 & z-x-y
\end{matrix}
\right).
\end{equation}
Computing explicitly the adjoint representation for $p$ 
we obtain:
\begin{equation}\label{eq: 6}
\Ad(q(0)) \left(\begin{matrix} a & b & 0 & 0 \\
c & d & 0 & 0 \\
0 & 0 & e & f \\
0 & 0 & g & h \end{matrix} \right) =
\left(\begin{matrix} a & b & -a-b+e & f \\
c & d & -c-d+ e & f \\
0 & 0 & e & f \\
0 & 0 & g & h
\end{matrix} \right).
\end{equation}
That is, an element of $\Ad(q(0))(\mathfrak{h})$ can be written as the
right hand side of \eqref{eq: 5}, for an appropriate choice of $a, b,
c, d, e, f, g, h$ (with $a + d + e + h=0$). 

We will show that for each $\sigma$, the failure of 
\eqref{eq: 4} leads to a nontrivial linear relation among the
$x,y,z$. So taking $x,y,z$ which do not solve these finitely many
linear relations forces \eqref{eq: 4}. For instance, if
$E_{31} \notin \mathfrak{u}^+_\sigma$, then examining the (31) entry in \eqref{eq: 5} and
\eqref{eq: 6} leads to $x=0$. Similarly $E_{32} \notin
\mathfrak{u}^+_\sigma$ leads to $y=0$. For a more interesting case consider the
case when both $E_{12}, E_{13}$ do not belong to $\mathfrak{u}^+_\sigma$. From two of the diagonal entries
in \eqref{eq: 5}, \eqref{eq: 6} 
we obtain $a=x, e = -z$. From the (12) entry we obtain
$b=y$, and 
from the (13) entry we find $-a-b+e = z$. We have four linear equations for the three variables $a, b, e$,
and they only have a solution when $0=x+y+2z$. This is the sought-for
linear relation. 

By similar arguments one deals with the case when both $E_{21},
E_{23}$ are not in $\mathfrak{u}^+_\sigma$, and since for each $\sigma$, one of the two 
elements
$E_{12}, E_{21}
$ 
is contained in $\mathfrak{u}^+_\sigma$, these cases cover all
possibilities. This concludes the proof. 
\end{proof}

\begin{comment}
\subsection{Example for Theorem \ref{thm;equiline}}
In this subsection we show that hypothesis \eqref{eq: weird condition}
is necessary for the validity of Theorem \ref{thm;equiline}. In fact
we show that 
it is not enough to replace \eqref{eq: weird condition} with the
requirement that there is a compact $X \subset X$ and $s_0$ for which
there are $t_n \to
\infty$ such that $f_{t_n} \bar{u}(s_0, as_0+b) \in K$. 
\end{comment}

\end{document}